\documentclass[a4paper,11pt,twoside,notitlepage]{amsart}

\usepackage{amsmath,amssymb,amsthm,mathtools}
\usepackage{xcolor}

\usepackage[ocgcolorlinks,linkcolor=blue,citecolor=blue,urlcolor=blue]{hyperref}
\usepackage{url}
\numberwithin{equation}{section}
\mathtoolsset{showonlyrefs}
\allowdisplaybreaks

\newtheorem{theorem}{Theorem}[section]
\newtheorem{proposition}[theorem]{Proposition}
\newtheorem{lemma}[theorem]{Lemma}
\newtheorem{corollary}[theorem]{Corollary}
\theoremstyle{remark}
\newtheorem{remark}[theorem]{Remark}

\DeclareMathOperator{\Diff}{Diff}
\DeclareMathOperator{\Id}{Id}
\DeclareMathOperator{\supp}{supp}
\DeclareMathOperator{\diver}{div}
\newcommand{\p}{\partial}
\newcommand{\ol}{\overline}
\newcommand{\wh}{\widehat}
\newcommand{\R}{\mathbb R}
\newcommand{\C}{\mathbb C}
\newcommand{\para}[1]{\vspace{3mm}\noindent\textbf{#1.}}
\newcommand{\Hs}{H^1_{\mathrm{scl}}}
\newcommand{\Hsm}{H^{-1}_{\mathrm{scl}}}

\title[Near-Euclidean anisotropic Calder\'on rigidity]{The anisotropic Calder\'on problem: rigidity near the Euclidean metric}

\author[Y.-H. Lin]{Yi-Hsuan Lin}
\address{Department of Applied Mathematics, National Yang Ming Chiao Tung University, Hsinchu, Taiwan; Fakult\"at f\"ur Mathematik, University of Duisburg-Essen, Essen, Germany}
\email{yihsuanlin3@gmail.com}

\keywords{Anisotropic Calder\'on problem, near-Euclidean metric, Dirichlet-to-Neumann map, harmonic coordinates, Carleman estimate, rigidity}
\subjclass[2020]{35R30, 35J15, 53C24}

\hypersetup{
	pdftitle={The anisotropic Calder\'on problem: rigidity near the Euclidean metric},
	pdfauthor={Yi-Hsuan Lin},
	pdfsubject={Near-Euclidean rigidity for the anisotropic Calderon problem},
	pdfkeywords={anisotropic Calderon problem, near-Euclidean metric, Dirichlet-to-Neumann map, harmonic coordinates, Carleman estimate, rigidity}
}

\begin{document}
	
\begin{abstract}
	We prove rigidity near the Euclidean metric for the anisotropic Calder\'on problem on smooth bounded connected domains in dimensions three and higher. Any smooth Riemannian metric sufficiently close to the Euclidean metric in a suitable H\"older norm and having the same Dirichlet-to-Neumann map agrees with it up to a diffeomorphism fixing the boundary pointwise. The result applies to general smooth anisotropic perturbations, without analytic or quasianalytic regularity assumptions, a prescribed conformal class, or a transversal product structure, and requires no convexity of the boundary. The proof combines harmonic coordinates with a Carleman estimate for compactly supported correctors to obtain a quadratic Fourier estimate on a frequency range determined by the perturbation. A frequency decomposition then yields rigidity.
\end{abstract}
	
	\maketitle
	\tableofcontents
	
	\section{Introduction}
	
	Inverse boundary value problems ask whether an unknown medium can be determined from measurements made at its boundary. Let $\Omega\subset\R^n$ be a smooth bounded domain and let $A=(A^{ij})$ be a real symmetric uniformly elliptic matrix-valued function. For a prescribed boundary value $f$, let $u$ solve
	\begin{equation}\label{eq:intro-conductivity}
		\begin{cases}
			-\diver(A\nabla u)=0 & \text{in }\Omega,\\
			u=f & \text{on }\p\Omega.
		\end{cases}
	\end{equation}
	The associated Dirichlet-to-Neumann (DN) map is $\Lambda_Af=A\nabla u\cdot\nu|_{\p\Omega}$, where $\nu$ is the outward unit normal. The problem was introduced in \cite{Calderon1980}. For isotropic conductivities $A=\gamma I$, global uniqueness for sufficiently smooth $\gamma$ in dimensions $n\ge3$ was proved in \cite{SylvesterUhlmann1987}; see \cite{FeldmanSaloUhlmann2025} for an introduction and further references.
	
	For anisotropic conductivities, uniqueness in fixed coordinates is impossible. If $F:\ol\Omega\to\ol\Omega$ is a smooth diffeomorphism satisfying $F|_{\p\Omega}=\Id$, define $(F_*A)(y)=DF(x)A(x)DF(x)^T/|\det DF(x)|$, where $x=F^{-1}(y)$, then $\Lambda_{F_*A}=\Lambda_A$. In dimensions $n\ge3$, the conductivity equation is equivalent to a Laplace--Beltrami equation through $A_g=(\det g)^{1/2}g^{-1}$ and $g_A=(\det A)^{1/(n-2)}A^{-1}$. The anisotropic Calder\'on problem asks whether a boundary fixing change of coordinates is the only obstruction to recovering a Riemannian metric from its DN map. In two dimensions, quasiconformal changes of coordinates reduce anisotropic conductivities to isotropic ones. Global uniqueness up to a boundary fixing quasiconformal change of coordinates holds for bounded measurable uniformly elliptic conductivities on simply connected planar domains \cite{AstalaLassasPaivarinta2005}; see \cite[Section~5.11]{FeldmanSaloUhlmann2025} for further discussion.
	
	For smooth Riemannian metrics in dimensions $n\ge3$, the DN map determines the induced boundary metric and the full boundary jet in boundary normal coordinates \cite{LeeUhlmann1989,JoshiLionheart2005}. Global uniqueness for real analytic metrics was first proved under additional geometric and topological assumptions in \cite{LeeUhlmann1989}. These additional assumptions were removed in \cite{LassasUhlmann2001}. The Poisson embedding method gives another proof of analytic rigidity and a global construction of the boundary fixing isometry \cite{LassasLiimatainenSalo2020}. For smooth metrics, uniqueness in a prescribed conformal class is known on conformally transversally anisotropic manifolds under suitable assumptions on the transversal geometry \cite{DosSantosFerreiraKenigSaloUhlmann2009,DosSantosFerreiraKurylevLassasSalo2016}. Recent work proves rigidity for quasianalytic metrics and for certain smooth metrics with symmetry or one-sided ordering \cite{ChenJiangLiuTao2026}. The problem for general smooth metrics in dimensions $n\ge3$ remains open \cite{FeldmanSaloUhlmann2025,ChenJiangLiuTao2026}.
	
	Nonuniqueness is known in several related settings. Counterexamples for partial boundary data and metrics with limited regularity are constructed in \cite{DaudeKamranNicoleau2020PartialData}. More recent examples give smooth nonisometric coefficients with identical full boundary data for equations with prescribed nonconstant potentials or nonzero spectral terms \cite{DaudeEncisoHelfferKamranNicoleau2026}. These examples concern different data or equations from \eqref{eq:intro-conductivity} with full boundary measurements.
	
	Singular transformation constructions give another form of nonuniqueness: a singular anisotropic metric can have the same boundary measurements as the Euclidean metric without arising from a smooth boundary fixing change of coordinates \cite{GreenleafKurylevLassasUhlmann2009}. The theorem below shows that this type of invisibility beyond the diffeomorphism gauge cannot occur for smooth metrics sufficiently close to $e$.
	
	Related results have been obtained for Lorentzian wave equations. Zeroth-order coefficients can be recovered under curvature assumptions and near the Minkowski geometry \cite{AlexakisFeizmohammadiOksanen2022,AlexakisFeizmohammadiOksanen2025}. For globally hyperbolic Lorentzian metrics that agree with the Minkowski metric outside a compact set, equality of the hyperbolic DN map with the Minkowski one implies rigidity up to a boundary fixing diffeomorphism \cite{OksanenRakeshSalo2026Rigidity}. This result follows from a stronger theorem using a formally determined collection of distorted plane wave measurements. For time-independent hyperbolic operators with the same lower-order terms, finitely many plane waves and complementary solutions determine the leading coefficients under geometric assumptions on one of the operators \cite{OksanenRakeshSalo2026FixedAngle}. Semiglobal uniqueness for Lorentzian metrics is also obtained when one of the two metrics is sufficiently close to the Minkowski metric \cite{OksanenRakeshSalo2026}. These results use hyperbolic boundary data, while the present paper concerns the elliptic DN map.
	
	A recent near-Euclidean result determines compactly supported scalar potentials on a fixed known three-dimensional metric that is $C^4$ close to the Euclidean metric and has strictly convex boundary, with a consequence for conformal multiples of the fixed metric \cite{UhlmannWang2026NearEuclidean}. The present paper proves nonlinear rigidity at the Euclidean metric for a general smooth anisotropic perturbation of the principal coefficient. The equality $\Lambda_g=\Lambda_e$ forces the metric to be Euclidean up to a boundary fixing diffeomorphism. We assume no convexity of the boundary, prescribed conformal class, transversal product structure, or analytic or quasianalytic regularity. The result holds in every dimension $n\ge3$, with smallness measured in $C^{k,\alpha}$, where $k=\lfloor n/2\rfloor+1$ and $n/2+1-k<\alpha<1$. Smoothness is used to recover and compare the full boundary jets, while the quantitative argument uses only $\|g-e\|_{C^{k,\alpha}(\ol\Omega)}$. Throughout the paper, $e$ denotes the Euclidean metric. A closely related rigidity theorem for smooth metrics near the Euclidean metric is proved in \cite{Stefanov2026}, with smallness measured in $H^s$ for an integer $s>n/2+1$.

	\para{Main results} We use the convention $\Delta_g=\operatorname{div}_g\nabla_g$, so that $-\Delta_g$ is nonnegative. For a smooth Riemannian metric $g$ on $\ol\Omega$, the DN map is
	\begin{equation*}
		\Lambda_g:C^\infty(\p\Omega)\to C^\infty(\p\Omega),\quad f\mapsto \p_{\nu_g}u\big|_{\p\Omega},
	\end{equation*}
	where $u$ is the unique solution of
	\begin{equation*}
		\begin{cases}
			-\Delta_gu=0 & \text{in }\Omega,\\
			u=f & \text{on }\p\Omega,
		\end{cases}
	\end{equation*}
	and $\nu_g$ is the unit outer normal with respect to $g$. We write $\Diff^\infty(\ol\Omega)$ for the smooth diffeomorphisms of $\ol\Omega$ onto itself, smooth up to the boundary, and $\Id$ for the identity map. We write $\operatorname{Sym}^2\R^n$ for the space of real symmetric $n\times n$ matrices and denote the Euclidean variable by $x=(x^1,\ldots,x^n)$. Repeated coordinate indices are summed from $1$ to $n$.

	Set $s_*=\frac n2+1$ and $k=\lfloor s_*\rfloor$. The H\"older exponent in the metric statements satisfies $s_*-k<\alpha<1$, so $k+\alpha>s_*$. When $n$ is even, this condition is $0<\alpha<1$; when $n$ is odd, it is $1/2<\alpha<1$.
	
	\begin{theorem}[Rigidity near the Euclidean metric]\label{thm:metric-main}
		Let $\Omega\subset\R^n$ be a smooth bounded connected domain, where $n\ge3$, and let $s_*-k<\alpha<1$. There exists $\varepsilon_1=\varepsilon_1(n,\Omega,\alpha)>0$ with the following property. Suppose that $g\in C^\infty(\ol\Omega;\operatorname{Sym}^2\R^n)$ is a Riemannian metric satisfying
		\begin{equation}\label{eq:metric-smallness}
			\|g-e\|_{C^{k,\alpha}(\ol\Omega)}<\varepsilon_1.
		\end{equation}
		Assume that
		\begin{equation}\label{eq:same-DN-map}
			\Lambda_g=\Lambda_e.
		\end{equation}
		For $1\le j\le n$, let $y^j$ solve
		\begin{equation}\label{eq:BVP-harmonic-coord}
			\begin{cases}
				-\Delta_gy^j=0 & \text{in }\Omega,\\
				y^j=x^j & \text{on }\p\Omega
			\end{cases}
		\end{equation}
		and set $Y=(y^1,\ldots,y^n)$. Then $Y\in\Diff^\infty(\ol\Omega)$, $Y|_{\p\Omega}=\Id$, and
		\begin{equation}\label{eq:metric-rigidity-conclusion}
			g=Y^*e.
		\end{equation}
		Moreover, $Y$ is the unique diffeomorphism $F\in\Diff^\infty(\ol\Omega)$ satisfying $F|_{\p\Omega}=\Id$ and $g=F^*e$.
	\end{theorem}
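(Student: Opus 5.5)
We follow the route indicated in the abstract: pass to harmonic coordinates, derive an integral identity from $\Lambda_g=\Lambda_e$, and show that the resulting quadratic Fourier estimate forces the perturbation to vanish.

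\textbf{Step 1: harmonic coordinates and boundary normalization.} By the smallness \eqref{eq:metric-smallness} and Schauder theory, $\|Y-\Id\|_{C^{k+1,\alpha}}\lesssim\|g-e\|_{C^{k,\alpha}}$. Hence $Y$ is a diffeomorphism of $\ol\Omega$ with $Y|_{\p\Omega}=\Id$, and it is smooth because $g$ is. Set $\tilde g=Y_*g$. Then $\Lambda_{\tilde g}=\Lambda_g=\Lambda_e$, the coordinate functions $x^j$ are $\tilde g$-harmonic, and $\|\tilde g-e\|_{C^{k,\alpha}}\lesssim\varepsilon_1$. Boundary determination \cite{LeeUhlmann1989} gives that $\tilde g$ and $e$ have the same full jet at $\p\Omega$ in boundary normal coordinates. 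I will try to show this upgrades to $\tilde g-e$ vanishing to infinite order at $\p\Omega$ in the Cartesian chart. The point is that $x^j$ harmonic for both metrics fixes the gauge: the boundary normal diffeomorphism relating the two jets is determined by the jets of the coordinate functions, whose Cauchy data agree. So $h:=\tilde g^{-1}\sqrt{\det\tilde g}-I$ extends by zero to a function in $C^{k,\alpha}_c(\R^n)$. Harmonicity of the $x^j$ means $\p_i(h^{ij})=0$, so $h$ is divergence free.

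\textbf{Step 2: the integral identity.} From $\Lambda_{\tilde g}=\Lambda_e$, for $\tilde g$-harmonic $u$ and Euclidean harmonic $v$ with the same boundary data we get $\int h\nabla u\cdot\nabla v=0$. Take complex geometrical optics solutions $u=e^{\zeta\cdot x}(1+r)$ with $\zeta\cdot\zeta=0$, which solve $\diver((I+h)\nabla u)=0$, and $v=e^{\eta\cdot x}$ with $\zeta+\eta=-i\xi$. The corrector $r$ solves $\diver((I+h)\nabla(e^{\zeta x}r))=-\diver(h\nabla e^{\zeta x})$. Since the source is compactly supported, the Carleman estimate for compactly supported correctors gives $\|r\|\lesssim|\zeta|^{-1}\|h\|\,|\zeta|$, and more precisely $r$ equals the linear response plus an error quadratic in $h$. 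Substituting, the linear term is $\zeta^T\wh h(\xi)\eta$. This yields the estimate
\[
|\zeta^T\wh h(\xi)\eta|\lesssim \tau^{a}\|h\|_{C^{k,\alpha}}\,\|h\|_{H^{s}}
\]
on the frequency range $|\xi|\lesssim\tau\le\tau_0(\|h\|)$. Divergence freeness together with the freedom in choosing $\zeta,\eta$ then controls all components of $\wh h(\xi)$, so $|\wh h(\xi)|\lesssim\|h\|^2$ there.

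\textbf{Step 3: frequency decomposition.} Split $\|h\|_{L^2}^2$ into the range $|\xi|\le R$ and its complement. The high frequencies are bounded by $R^{-2(k+\alpha)}\|h\|_{C^{k,\alpha}}^2$. The low frequencies are bounded by $R^{n}\sup|\wh h|^2\lesssim R^n\|h\|^4$. Choosing $R$ as a negative power of $\|h\|$, with $R$ still inside the admissible range of Step 2, we need the exponents to be compatible. This is exactly where $k+\alpha>n/2+1$ enters. The outcome is $\|h\|\le C\|h\|^{1+\delta}$, and since $\|h\|$ is small this forces $h=0$. Therefore $\tilde g=e$, which gives $g=Y^*e$.

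\textbf{Uniqueness.} If $g=F^*e$ with $F|_{\p\Omega}=\Id$, then the components $F^j$ are $g$-harmonic, since the $x^j$ are $e$-harmonic, and $F^j=x^j$ on $\p\Omega$. Hence $F^j=y^j$, so $F=Y$.

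\textbf{Main obstacle.} The main obstacle is Step 2. We need the quadratic remainder bound to be uniform over a frequency range wide enough for Step 3. The Carleman estimate loses powers of $\tau$, and only $C^{k,\alpha}$ control is available, so the exponent bookkeeping must close precisely at the threshold $s_*$.
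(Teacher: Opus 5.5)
The overall route is the paper's: the harmonic gauge $Y$; boundary flatness of $Y_*g-e$ by comparison with a jet-normalizing gauge (your Step~1 sketch is essentially the paper's argument with $Z=Y\circ F_0$); zero extension of the divergence-free $h$; the mixed identity with a CGO solution against $e^{\eta\cdot x}$; polarization over $\theta\perp\xi$; a frequency split; and Dirichlet uniqueness for the last claim. The gap is in Steps~2--3, which as written do not close.

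Your bound $|\zeta^T\wh h(\xi)\eta|\lesssim\tau^a\|h\|_{C^{k,\alpha}}\|h\|_{H^s}$ is quadratic in the wrong norms. Inserted into the splitting of $\|h\|_{L^2}$, it gives only $\|h\|_{L^2}\lesssim\|h\|_{C^{k,\alpha}}^{1+\theta}$. A nonzero oscillating $h$ can satisfy that, and your final self-improving inequality silently uses the same norm on both sides. What is needed is a bound quadratic in $X=\|h\|_{L^2}$ itself.

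In the paper this comes from the semiclassical Carleman estimate $H^1_{\mathrm{scl}}\to H^{-1}_{\mathrm{scl}}$:
\begin{itemize}
\item the source $\zeta_0^Th\zeta_0$ has $H^{-1}_{\mathrm{scl}}$ norm $\lesssim X$;
\item the divergence-form perturbation is absorbed using only $\delta=\|h\|_{L^\infty}$, at the price of the window $1\le|\zeta|\le c_0\delta^{-1}$.
\end{itemize}
This yields $\|(\nabla+\zeta)r_\zeta\|_{L^2}\lesssim|\zeta|^2X$, rather than $\|r\|\lesssim\|h\|$. Cauchy--Schwarz then bounds the remainder $\int e^{\mathsf{i}\xi\cdot x}(h(\nabla+\zeta)r_\zeta)\cdot\eta\,dx$ by $C|\xi|^3X^2$, against the main term $\tfrac{|\xi|^2}{4}\theta^T\wh h\theta$. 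Hence $|\wh h(\xi)|\lesssim(1+|\xi|)X^2$ for $|\xi|\lesssim\delta^{-1}$. The factor $1+|\xi|$, which you drop, makes the low-frequency part $\lesssim R^{n/2+1}X^2$, and this is where $s_*$ actually comes from.

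The closing step then needs two choices absent from your plan. First, take $R=\kappa X^{-1/s_*}$, chosen in terms of $X$ rather than the window. The low part is then at most $X/2$, and the tail $CR^{-s_*}M$ with $M=\|h\|_{H^{s_*}}$ becomes $C\kappa^{-s_*}XM$. So $X\le CMX$, and $X=0$ once $M$ is small. Second, to place this $R$ inside the window, use Gagliardo--Nirenberg: $\delta\lesssim X^{1/s_*}M^{1-1/s_*}$ gives $R\delta\lesssim\kappa M^{1-1/s_*}$. A window governed by a higher norm of $h$, as your $\tau_0(\|h\|)$ suggests, would not contain $R$, since $X$ may be far smaller than $M$.

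Two lesser points. The tail should be taken in $H^{s_*}$, not $H^{k+\alpha}$: the zero extension of a $C^{k,\alpha}$ field is controlled only in $H^s$ for $s<k+\alpha$, which is why $k+\alpha>s_*$ is imposed strictly. You also leave implicit why the corrector is compactly supported and solves its equation across the boundary. This is a second use of the DN equality; the paper extends $h$ by zero to $\Omega_{\mathrm e}$ with $\ol\Omega\Subset\Omega_{\mathrm e}$ and transfers $\Lambda_{I+h}=\Lambda_I$ to $\p\Omega_{\mathrm e}$ by a gluing argument.
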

	
	The conclusion is the natural one because every boundary fixing diffeomorphism preserves the DN map.
	
	\para{Harmonic coordinate reduction and the conductivity theorem}
	A related harmonic coordinate argument for the linearized anisotropic Calder\'on problem appears in \cite{SylvesterUhlmann1991}; see also \cite[Section~9, pp.~190--193]{Uhlmann1992}. We use the harmonic map determined by the boundary values of the Cartesian coordinate functions to fix the diffeomorphism freedom in the nonlinear problem. Let $y^j$ solve \eqref{eq:BVP-harmonic-coord}, and set $Y=(y^1,\ldots,y^n)$. When $g$ is sufficiently close to $e$, this map is a boundary fixing diffeomorphism. If $\widetilde g=Y_*g$ and $\widetilde A=(\det\widetilde g)^{1/2}\widetilde g^{-1}$, then the coordinate functions are $\widetilde g$-harmonic and
	\begin{equation}\label{eq:intro-harmonic-gauge}
		\p_i\widetilde A^{ij}=0
		\quad\text{for }1\le j\le n.
	\end{equation}
	Writing $\widetilde A=I+H$, \eqref{eq:intro-harmonic-gauge} gives $\diver H=0$. This is an exact condition in the fixed Euclidean target coordinates, not a linearized gauge condition. It follows from the choice of coordinates and is not an additional assumption on the original metric in Theorem~\ref{thm:metric-main}. Moreover, Proposition~\ref{prop:harmonic-coordinates} gives $\|H\|_{C^{k,\alpha}(\ol\Omega)}\le C\|g-e\|_{C^{k,\alpha}(\ol\Omega)}$, so the transformed conductivity remains close to $I$ in the same H\"older norm.
	
	The key result used after this normalization is the following conductivity theorem. The compact support condition below is an assumption of the conductivity theorem and does not follow directly from the harmonic coordinate normalization. For a matrix field $H=(H^{ij})$, the notation $\diver H=0$ means $\p_iH^{ij}=0$ in distributions for every $j$.
	
	\begin{theorem}[Rigidity in harmonic gauge]\label{thm:conductivity-main}
		Let $\Omega\subset\R^n$ be a smooth bounded connected domain, where $n\ge3$, and let $K\Subset\Omega$ be fixed. There exists $\varepsilon_0>0$, depending only on $n$, $\Omega$, and $K$, with the following property. Let $A=I+H$ be a real symmetric uniformly elliptic conductivity such that
		\begin{equation}\label{eq:main-H-assumptions}
			H\in H^{s_*}(\R^n;\operatorname{Sym}^2\R^n),\quad \supp H\subset K,\quad \diver H=0.
		\end{equation}
		If $\Lambda_A=\Lambda_I$ and $\|H\|_{H^{s_*}(\R^n)}<\varepsilon_0$, then $H=0$.
	\end{theorem}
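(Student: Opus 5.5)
Starting from $\Lambda_A=\Lambda_I$, I will use the standard Alessandrini-type integral identity: for $u$ solving $\diver(A\nabla u)=0$ and $v$ harmonic in $\Omega$,
\[
0=\int_\Omega (A-I)\nabla u\cdot\nabla v\,dx=\int_\Omega H\nabla u\cdot\nabla v\,dx .
\]
The plan is to insert complex geometrical optics type data adapted to a frequency $\xi\in\R^n$. Take $v=e^{-i\zeta_2\cdot x}$ with $\zeta_2\cdot\zeta_2=0$. Take $u=e^{i\zeta_1\cdot x}(1+r)$, where $\zeta_1+\zeta_2=\xi$, $\zeta_j\cdot\zeta_j=0$, and $|\Im\zeta_j|\sim\tau$. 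The corrector $r$ is chosen to solve $\diver(A\nabla(e^{i\zeta_1\cdot x}(1+r)))=0$. Since $\diver H=0$, the leading forcing term is $-\diver(H\, i\zeta_1 e^{i\zeta_1\cdot x})=-(H\zeta_1\cdot\zeta_1)e^{i\zeta_1 \cdot x}$ up to signs. Because $\supp H\subset K$, I would construct $r$ via the Carleman estimate for the conjugated operator $e^{-i\zeta_1\cdot x}\diver(A\nabla e^{i\zeta_1\cdot x}\,\cdot)$ on a slightly larger compact set, in semiclassical norms $\Hs,\Hsm$ with $h=1/\tau$. The target is the bound $\|r\|_{\Hs}\lesssim \tau^{-1}\|H\|\,(\text{powers of }\tau)$. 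The point of the divergence-free gauge is that $r$ is a first-order-small quantity in $H$.

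Substituting into the identity gives
\[
0=-\zeta_1\cdot \widehat{H}(\xi)\zeta_2+\int H\nabla r\cdot\nabla v\, e^{i\zeta_1\cdot x}\,dx ,
\]
where the last integral is quadratic in $H$. Its size is controlled by the Carleman bound: roughly $\tau^2\cdot\|H\|_{L^\infty}\|H\|$ divided by a normalization. Dividing by $\tau^2$ (the size of $\zeta_1\cdot\widehat H\zeta_2$), I aim for a quadratic Fourier estimate: for $|\xi|\le c\tau$ and suitable choices of the $\zeta_j$,
\[
|\widehat H(\xi)|\lesssim \|H\|_{C^0}\,\|H\|_{L^2}\quad(\text{or }\tau^{-a}\text{-weighted}).
\]
All components of $\widehat H(\xi)$ must be recovered, not only the contractions $\zeta_1\cdot\widehat H\zeta_2$. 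For this I use that $n\ge3$ gives enough freedom in the choice of complex vectors orthogonal to $\xi$. I also use that $\xi_i\widehat H^{ij}(\xi)=0$, which removes exactly the components that the contractions cannot see. This is the linearized Sylvester--Uhlmann argument made quantitative.

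Finally I will run a frequency decomposition. Low frequencies $|\xi|\le R$ are handled by the quadratic estimate with $\tau\sim R$, which gives
\[
\int_{|\xi|\le R}|\widehat H|^2\lesssim R^{n}\|H\|_{C^0}^2\|H\|_{L^2}^2 .
\]
High frequencies $|\xi|>R$ are bounded by $R^{-2s_*}\|H\|_{H^{s_*}}^2$. I then choose $R$ depending on $\|H\|$ so that both pieces are at most $\tfrac12\|H\|_{L^2}^2$, using Sobolev embedding $H^{s_*}\subset C^1$ and the smallness $\|H\|_{H^{s_*}}<\varepsilon_0$. This forces $\|H\|_{L^2}=0$, hence $H=0$.

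The main obstacle is the corrector estimate. The Carleman constant must be uniform for $A$ near $I$, with only $H^{s_*}$ regularity of $H$. The estimate of $r$ must also be good enough in $\tau$ that the error term beats the growth $R^n$ in the low-frequency count. If the straightforward bound loses powers of $\tau$, I would first try to exploit $\diver H=0$ more fully. One route is to integrate by parts in the remainder so that derivatives fall on $r$ only in the combination controlled by $\Hs$. Another is to balance $R$ against $\|H\|_{H^{s_*}}$, which gives $R\sim\|H\|^{-\theta}$. The exponent $s_*=n/2+1$ is presumably exactly what makes this balance close.
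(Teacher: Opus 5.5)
There is a genuine gap: as set up, the frequency split cannot close, and the component recovery misses part of $\wh H(\xi)$. The architecture itself is the paper's: Alessandrini identity, a CGO solution whose corrector is controlled by a semiclassical Carleman estimate on a frequency range limited by $\delta=\|H\|_{L^\infty}$, then a low/high frequency split. Building $r$ by Carleman solvability, instead of the paper's exterior normalization (where $\Lambda_A=\Lambda_I$ and $A=I$ near $\p\Omega$ make the actual corrector compactly supported, so only an a priori estimate is needed), is a legitimate variant.

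Your Fourier bound $|\wh H(\xi)|\lesssim\|H\|_{C^0}\|H\|_{L^2}$ is only linear in $X=\|H\|_{L^2}$. (With $\|D_\zeta r\|_{L^2}\lesssim\tau^2X$, putting $L^\infty$ on $H$ in the remainder actually costs an extra factor $\tau$.) With it, your split needs $R^{n/2}\delta\ll1$ and $R^{s_*}\gg M/X$ simultaneously, i.e.\ $M\delta^{(n+2)/n}\lesssim X$. Smallness of $M=\|H\|_{H^{s_*}}$ does not give this: a divergence-free bump $H=\Delta\psi\,I-\nabla^2\psi$ of amplitude $\varepsilon\ell$ at scale $\ell\to0$ has $M\sim\varepsilon$, $\delta\sim\varepsilon\ell$, $X\sim\varepsilon\ell^{1+n/2}$. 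The missing point is to put $L^2$ on both factors of the remainder. The source $\zeta_0^TH\zeta_0$ is bounded in $\Hsm$ by $X$, so $\|D_\zeta r_\zeta\|_{L^2}\le C|\zeta|^2X$ for $|\zeta|\lesssim\delta^{-1}$, and Cauchy--Schwarz yields $|\wh H(\xi)|\le C(1+|\xi|)X^2$. The radius $R=\kappa X^{-1/s_*}$ then makes the low part at most $X/2$ and the tail at most $C\kappa^{-s_*}MX$, which gives $X\le CMX$. The quantity $\delta$ enters only through the admissibility condition $R\lesssim\delta^{-1}$. Sobolev embedding $\delta\lesssim M$ cannot verify this, since it only gives $R\delta\lesssim MX^{-1/s_*}$. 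You need the interpolation $\delta\lesssim X^{1/s_*}M^{1-1/s_*}$, which gives $R\delta\lesssim\kappa M^{1-1/s_*}$. (Also, $H^{s_*}\not\subset C^1$; only $H^{s_*}\subset L^\infty$ holds.)

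Second, in your regime $|\xi|\le c\tau$ with one $\tau\sim R$ for all $|\xi|\le R$, the contraction does not see all of $\wh H(\xi)$, and $\xi_i\wh H^{ij}=0$ does not remove the invisible part. Write $\zeta_{1,2}=\xi/2\pm\mu$ with $\mu\cdot\xi=0$ and $\mu\cdot\mu=-|\xi|^2/4$; then $\zeta_1^T\wh H(\xi)\zeta_2=-\mu^T\wh H(\xi)\mu$. The trace-free part of $\wh H(\xi)$ on $\xi^\perp$ enters with weight about $\tau^2$. A component $c\,(I-\omega\omega^T)$ with $\omega=\xi/|\xi|$ is compatible with $\wh H(\xi)\xi=0$, yet it contributes only $c|\xi|^2/4$. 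Against a remainder of size $\tau^3X^2$ this gives just $|c|\lesssim\tau^3X^2/|\xi|^2$, which is useless for $|\xi|\ll\tau$. The paper instead ties the complex frequency to $\xi$: $\zeta=\frac{|\xi|}{2}(\theta+\mathsf{i}\omega)$ and $\eta=\frac{|\xi|}{2}(-\theta+\mathsf{i}\omega)$, with $\theta\perp\xi$ a \emph{real} unit vector (in your notation, $\mu$ purely imaginary with $|\mu|=|\xi|/2$). Then $\zeta^T\wh H\eta=-\frac{|\xi|^2}{4}\theta^T\wh H\theta$, and real polarization on $\xi^\perp$ recovers the full matrix. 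The resulting pointwise bound $C(1+|\xi|)X^2$ integrates to $CR^{s_*}X^2$ over $|\xi|\le R$, which is exactly what the corrected split needs.
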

	
	For a complex matrix $M=(M_{jk})\in\C^{n\times n}$, we write $|M|=(\sum_{j,k=1}^n|M_{jk}|^2)^{1/2}$. The $L^2$ and Sobolev norms of matrix fields are formed using this pointwise matrix norm. Throughout the proof, the matrix $L^\infty$ norm is the operator norm
	$\|H\|_{L^\infty}:=\operatorname*{ess\,sup}_{x\in\R^n}\sup_{v\in\C^n,\ |v|=1}|H(x)v|$.
	
	Theorem~\ref{thm:conductivity-main} requires smallness in $H^{s_*}(\R^n)$, where $s_*=n/2+1$. When $n=3$, this is the space $H^{5/2}(\R^3)$. The metric theorem uses $C^{k,\alpha}$ smallness with $k=\lfloor s_*\rfloor$ and $\alpha>s_*-k$, which becomes $C^{2,\alpha}$ with $\alpha>1/2$ in dimension three. The theorem is not applied directly to the perturbation $H=A_{Y_*g}-I$ on the original domain, since the harmonic coordinate normalization gives $\diver H=0$ but does not imply $\supp H\Subset\Omega$. Smooth boundary determination gives an auxiliary boundary fixing gauge in which the metric has the same full boundary jet as $e$. Comparing this gauge with the canonical harmonic gauge shows that $H$ vanishes to infinite order at the boundary. The perturbation can then be extended by zero to a fixed larger domain. The condition $k+\alpha>s_*$ gives an $H^{s_*}(\R^n)$ bound for this extension in terms of $\|H\|_{C^{k,\alpha}(\ol\Omega)}$, and Theorem~\ref{thm:conductivity-main} applies.

	\para{Complex null vectors} We write $\mathsf{i}=\sqrt{-1}$ and extend the Euclidean dot product on $\R^n$ complex bilinearly to $\C^n$, so that $\zeta\cdot\eta=\sum_{j=1}^n\zeta_j\eta_j$ for $\zeta,\eta\in\C^n$ (without an additional complex conjugate). A vector $\zeta\in\C^n$ is called a complex null vector if $\zeta\cdot\zeta=0$. The notation $|\zeta|=(\sum_{j=1}^n|\zeta_j|^2)^{1/2}$ denotes the usual Hermitian norm on $\C^n$. If $\zeta=\alpha+\mathsf{i}\beta$ with $\alpha,\beta\in\R^n$, then the null condition is equivalent to $\alpha\cdot\beta=0$ and $|\alpha|=|\beta|$. Since $\Delta e^{\zeta\cdot x}=(\zeta\cdot\zeta)e^{\zeta\cdot x}$, the function $e^{\zeta\cdot x}$ is harmonic whenever $\zeta$ is a complex null vector.
	
	\para{Outline of the proof} We first reduce Theorem~\ref{thm:metric-main} to Theorem~\ref{thm:conductivity-main}. The canonical harmonic map gives a boundary fixing diffeomorphism $Y$ and a transformed conductivity $I+H$ satisfying $\diver H=0$, together with the estimate $\|H\|_{C^{k,\alpha}(\ol\Omega)}\le C\|g-e\|_{C^{k,\alpha}(\ol\Omega)}$. Smooth boundary determination supplies a second boundary fixing diffeomorphism for which the metric has the same full boundary jet as $e$. Comparing the two gauges shows that the metric in the canonical harmonic coordinates, and hence $H$, is flat at the boundary. Extending $H$ by zero to a fixed larger domain produces a smooth compactly supported divergence free tensor. Since $k+\alpha>s_*$, the $H^{s_*}$ norm of this extension is controlled by $\|H\|_{C^{k,\alpha}(\ol\Omega)}$. A weak gluing argument transfers equality of the DN maps to the outer boundary. Theorem~\ref{thm:conductivity-main} then gives $H=0$, so the metric is Euclidean in the harmonic coordinates.
	
	For the proof of Theorem~\ref{thm:conductivity-main}, set $X=\|H\|_{L^2}$, $\delta=\|H\|_{L^\infty}$, and $M=\|H\|_{H^{s_*}}$. The difficulty is that $H$ changes the principal part of the equation. Conjugation by a harmonic exponential produces terms quadratic in the complex frequency, so the perturbation estimate used here cannot be absorbed at arbitrarily large frequencies for a fixed nonzero $H$. We instead work on a finite frequency range determined by $\delta$ and compare the resulting estimate with the Sobolev tail of the same tensor $H$.
	
	For a complex null vector $\zeta$, let $u_\zeta$ be the $A$-harmonic solution with boundary value $e^{\zeta\cdot x}$. Equality of the DN maps gives a compactly supported corrector $r_\zeta$ such that $u_\zeta=e^{\zeta\cdot x}(1+r_\zeta)$. A Carleman estimate for compactly supported functions gives
	\begin{equation}\label{eq:intro-corrector-estimate}
		\|(\nabla+\zeta)r_\zeta\|_{L^2}\le C|\zeta|^2X,\quad\text{for }1\lesssim|\zeta|\lesssim\delta^{-1}.
	\end{equation}
	Pairing $u_\zeta$ with an explicit harmonic exponential, using $\diver H=0$, and estimating the integral remainder by the Cauchy--Schwarz inequality yields
	\begin{equation}\label{eq:intro-Fourier-estimate}
		|\wh H(\xi)|\le C(1+|\xi|)X^2,\quad\text{for }|\xi|\lesssim\delta^{-1}.
	\end{equation}
	The low-frequency part is bounded by $CR^{s_*}X^2$, while the $H^{s_*}$ norm bounds the remaining tail by $CR^{-s_*}M$. For $X>0$, choose $R=\kappa X^{-1/s_*}$ with a fixed sufficiently small $\kappa>0$. Gagliardo--Nirenberg interpolation gives $\delta\le CX^{1/s_*}M^{1-1/s_*}$, so $R\delta\le C\kappa M^{1-1/s_*}$ and the chosen radius lies in the available frequency range when $M$ is sufficiently small. Absorbing the low-frequency term gives $X\le CMX$, and smallness of $M$ forces $X=0$.
	
	\para{Organization of the paper} In Section~\ref{sec:preliminaries}, we prove the mixed comparison identity, construct the compactly supported correctors, and derive their conjugated equation. The Carleman estimate and the resulting corrector bounds are established in Section~\ref{sec:carleman}. We then use these bounds to obtain the Fourier estimate and prove Theorem~\ref{thm:conductivity-main} in Section~\ref{sec:fourier}. Section~\ref{sec:metric} returns to the metric problem. There we construct the canonical harmonic gauge and compare it with the boundary determination. An exterior gluing argument transfers equality of the DN maps to a larger domain, allowing us to apply the conductivity theorem and complete the proof of Theorem~\ref{thm:metric-main}.

	\section{Exterior normalization and the conjugated equation}\label{sec:preliminaries}
	
	\para{The DN map and the mixed identity} Let $A$ be a real symmetric uniformly elliptic conductivity on $\Omega$. All complex-valued dualities and energy pairings below are understood in the complex bilinear sense, with no complex conjugation, unless a conjugate is displayed explicitly. For $f\in H^{1/2}(\p\Omega)$, let $u_f\in H^1(\Omega)$ be the unique weak solution of \eqref{eq:intro-conductivity}. The DN map is defined weakly by
	\begin{equation}\label{eq:weak-DN}
		\langle\Lambda_Af,g\rangle  =\int_\Omega A\nabla u_f\cdot\nabla v\,dx,
	\end{equation}
	where $v\in H^1(\Omega)$ is any function whose boundary trace is $v|_{\p\Omega}=g\in H^{1/2}(\p\Omega)$. The definition is independent of the extension $v$. The following is the comparison identity used throughout the proof.
	
	\begin{lemma}[Mixed comparison identity]\label{lem:mixed-identity}
		Suppose that $A=I+H$ and $\Lambda_A=\Lambda_I$. Let $u\in H^1(\Omega)$ satisfy $-\diver(A\nabla u)=0$, and let $v\in H^1(\Omega)$ satisfy $-\Delta v=0$. Then
		\begin{equation}\label{eq:mixed-identity}
			\int_\Omega H\nabla u\cdot\nabla v\,dx=0.
		\end{equation}
	\end{lemma}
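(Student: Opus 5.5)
The plan is to use the weak definition \eqref{eq:weak-DN} of the DN map twice, once for $A$ and once for $I$, with the two solutions playing swapped roles, and to subtract. The symmetry of $A$ lets the bilinear form be read in either order.

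Let $f=u|_{\p\Omega}\in H^{1/2}(\p\Omega)$ and $h=v|_{\p\Omega}$. Since $u$ is the weak solution of \eqref{eq:intro-conductivity} with data $f$, taking $v$ as the extension of $h$ in \eqref{eq:weak-DN} gives
\begin{equation*}
\langle\Lambda_Af,h\rangle=\int_\Omega A\nabla u\cdot\nabla v\,dx .
\end{equation*}
Likewise $v$ is the weak $I$-harmonic solution with data $h$, so taking $u$ as the extension of $f$ gives
\begin{equation*}
\langle\Lambda_Ih,f\rangle=\int_\Omega\nabla v\cdot\nabla u\,dx .
\end{equation*}

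Next we use the symmetry of the DN maps. For real symmetric $A$, the weak formulation shows $\langle\Lambda_Af_1,f_2\rangle=\int A\nabla u_{f_1}\cdot\nabla u_{f_2}=\langle\Lambda_Af_2,f_1\rangle$ in the bilinear pairing, by choosing the solution $u_{f_2}$ itself as the extension. Hence
\begin{equation*}
\langle\Lambda_Ih,f\rangle=\langle\Lambda_If,h\rangle=\langle\Lambda_Af,h\rangle ,
\end{equation*}
where the last equality is the hypothesis $\Lambda_A=\Lambda_I$. Subtracting the two displayed integrals then yields
\begin{equation*}
\int_\Omega (A-I)\nabla u\cdot\nabla v\,dx=\int_\Omega H\nabla u\cdot\nabla v\,dx=0 .
\end{equation*}

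The only points needing care are these. The complex-valued solutions are handled by bilinearity, treating real and imaginary parts separately. The hypothesis $\Lambda_A=\Lambda_I$, stated on $C^\infty$ data, extends to $H^{1/2}$ data by density and the continuity of $\Lambda_A:H^{1/2}\to H^{-1/2}$. The independence of \eqref{eq:weak-DN} from the choice of extension is already granted in the paper. No step is a genuine obstacle; the symmetry identity is the one place where the symmetry of $A$ is actually used.
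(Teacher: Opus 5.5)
Your proof is correct and is essentially the paper's argument: both apply the weak definition \eqref{eq:weak-DN} to $\Lambda_A$ and to $\Lambda_I$ and subtract. Your use of the symmetry of $\Lambda_I$ plus independence of the extension repackages the paper's step $\int_\Omega\nabla u_0\cdot\nabla v\,dx=\int_\Omega\nabla u\cdot\nabla v\,dx$, where $u_0$ is the harmonic extension of $f$ and $u-u_0\in H_0^1(\Omega)$. One small correction: that step uses only the symmetry of $\Lambda_I$, i.e.\ of $I$, so the symmetry of $A$ is never actually needed.
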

	
	\begin{proof}
		Let $f=u|_{\p\Omega}$ and $g=v|_{\p\Omega}$. By the weak definitions \eqref{eq:weak-DN} of the two DN maps,
		\begin{equation}\label{eq:mixed-proof-1}
			\int_\Omega A\nabla u\cdot\nabla v\,dx
			=\langle\Lambda_Af,g\rangle
			=\langle\Lambda_If,g\rangle.
		\end{equation}
		Let $u_0$ be the harmonic function with trace $f$. Since $v$ is Euclidean harmonic,
		\begin{equation}\label{eq:mixed-proof-2}
			\langle\Lambda_If,g\rangle
			=\int_\Omega\nabla u_0\cdot\nabla v\,dx
			=\int_\Omega\nabla u\cdot\nabla v\,dx.
		\end{equation}
		The last equality follows because $u-u_0\in H_0^1(\Omega)$ and $v$ is harmonic. Subtracting \eqref{eq:mixed-proof-2} from \eqref{eq:mixed-proof-1} proves \eqref{eq:mixed-identity}.
	\end{proof}
	
	\begin{remark}\label{rem:mikko-comparison}
		Lemma~\ref{lem:mixed-identity} uses only one solution for the unknown conductivity. In the Fourier argument in Section~\ref{sec:fourier}, we choose the second factor to be an explicit harmonic exponential. This makes the Fourier variable explicit. There is only one unknown corrector in the integral identity, and no product of two such correctors. The integral remainder can be estimated by pairing $H$ and the differentiated corrector in $L^2$. Since the corrector estimate is linear in $\|H\|_{L^2}$, the resulting Fourier estimate is quadratic in this norm.
	\end{remark}
	
	\para{Compactly supported correctors} We now assume that $A=I+H$, $\supp H\subset K\Subset\Omega$, and $\Lambda_A=\Lambda_I$. Let $\zeta\in\C^n$ be a complex null vector. By the definition above, $e^{\zeta\cdot x}$ is harmonic.
	
	\begin{lemma}[Exterior-normalized complex exponential solutions]\label{lem:compact-corrector}
		There is a compact set $K_1$ satisfying $K\Subset \operatorname{int}(K_1)\subset K_1\Subset\Omega$, depending only on $K$ and $\Omega$, with the following property. For every complex null vector $\zeta\in\C^n$, there is a unique $A$-harmonic solution $u_\zeta\in H^1(\Omega)$ with boundary trace $e^{\zeta\cdot x}$, and
		\begin{equation}\label{eq:w-compact-support}
			w_\zeta:=u_\zeta-e^{\zeta\cdot x}=0
			\quad\text{in }\Omega\setminus K_1.
		\end{equation}
		Moreover,
		\begin{equation}\label{eq:r-def}
			u_\zeta=e^{\zeta\cdot x}(1+r_\zeta), \quad r_\zeta=e^{-\zeta\cdot x}w_\zeta, \quad
			\supp r_\zeta\subset K_1.
		\end{equation}
		In particular, $r_\zeta\in H^1(\Omega)$.
	\end{lemma}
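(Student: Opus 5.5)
The plan is to combine the weak definition of the DN map with unique continuation for the Euclidean Laplacian in the region $\Omega\setminus K$ where $A=I$. The set $K_1$ is chosen by a purely topological construction depending only on $K$ and $\Omega$, namely as the "filled-in" neighbourhood of $K$.

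\textbf{Existence, uniqueness and regularity.} Existence and uniqueness of $u_\zeta\in H^1(\Omega)$ follow from uniform ellipticity and Lax--Milgram, after subtracting an $H^1$ extension of the smooth trace $e^{\zeta\cdot x}|_{\p\Omega}$. Once $w_\zeta=0$ off $K_1$ is proved, the statements about $r_\zeta$ are immediate. Indeed, $e^{-\zeta\cdot x}$ is smooth and bounded with bounded derivatives on $\ol\Omega$, so $r_\zeta=e^{-\zeta\cdot x}w_\zeta\in H^1(\Omega)$ with $\supp r_\zeta=\supp w_\zeta\subset K_1$.

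\textbf{Construction of $K_1$.} Let $d=\operatorname{dist}(K,\p\Omega)>0$ and choose $0<\rho<d/2$ so small that the collar $\{x\in\Omega:\operatorname{dist}(x,\p\Omega)<\rho\}$ is a tubular neighbourhood. Then every point of the collar is joined to $\p\Omega$ by a normal segment inside the collar. Set $K'=\{x:\operatorname{dist}(x,K)\le d/2\}$ and let $U$ be the union of those connected components of $\Omega\setminus K'$ that meet the collar. Define $K_1=\Omega\setminus U$. This set is closed in $\Omega$ and disjoint from the collar, so it is compact in $\Omega$. It contains $K'\supset\{\operatorname{dist}(\cdot,K)<d/2\}$, so $K\Subset\operatorname{int}K_1$. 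Moreover, every component of $\Omega\setminus K_1=U$ meets the collar.

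\textbf{Vanishing of $w_\zeta$ outside $K_1$.} The core step is to show that $w:=w_\zeta$ has zero Cauchy data on $\p\Omega$ in the weak sense. Since $e^{\zeta\cdot x}$ is itself the harmonic function with trace $f=e^{\zeta\cdot x}|_{\p\Omega}$, the hypothesis $\Lambda_A=\Lambda_I$ and \eqref{eq:weak-DN} give, for every $v\in H^1(\Omega)$,
\begin{equation*}
\int_\Omega A\nabla u_\zeta\cdot\nabla v\,dx=\int_\Omega\nabla e^{\zeta\cdot x}\cdot\nabla v\,dx .
\end{equation*}
If moreover $\supp v\cap K=\emptyset$, then $A=I$ on $\supp v$, and we obtain $\int_\Omega\nabla w\cdot\nabla v\,dx=0$, with no restriction on the trace of $v$.

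Now fix a bounded open neighbourhood $O$ of $\p\Omega$ with $O\cap K'=\emptyset$, and set $W=w$ in $\Omega$ and $W=0$ in $O\setminus\Omega$. Because $w|_{\p\Omega}=0$, we have $W\in H^1_{\mathrm{loc}}(\Omega\cup O)$. For any $\varphi\in C_c^\infty((\Omega\cup O)\setminus K')$, the restriction $\varphi|_\Omega$ is an admissible test function above, so $\int\nabla W\cdot\nabla\varphi=0$. Hence $W$ is weakly, and by Weyl's lemma classically, harmonic, and therefore real analytic, on $(\Omega\cup O)\setminus K'$.

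Take any component $V$ of $U$. It meets the collar, and through a normal segment it is connected inside $(\Omega\cup O)\setminus K'$ to $O\setminus\ol\Omega$, where $W\equiv0$. By unique continuation for real analytic functions on the connected component of $(\Omega\cup O)\setminus K'$ containing $V$, we get $W\equiv0$ on $V$. This proves $w=0$ on $\Omega\setminus K_1$.

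\textbf{Expected obstacle.} The delicate points are the weak justification that the Neumann data of $w$ vanish, which is handled by test functions with arbitrary traces, and the topological bookkeeping. The latter ensures that every component of $\Omega\setminus K_1$ is reachable from the exterior. This is exactly why $K_1$ must fill in the "holes" of $K$ rather than being a plain neighbourhood of $K$.
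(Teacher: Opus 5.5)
Your proof is correct and uses the paper's mechanism: equality of the DN maps, tested against functions with arbitrary trace supported away from $K$, makes the zero extension of $w_\zeta$ weakly (hence classically) harmonic across $\partial\Omega$ where $A=I$, and unique continuation from the exterior region where it vanishes forces $w_\zeta=0$. The only difference is the choice of $K_1$. The paper runs unique continuation only on a two-sided tubular collar $\mathcal U$ of width $\rho<\tfrac12\operatorname{dist}(K,\partial\Omega)$, each component of which already contains an exterior piece, and takes $K_1=\{x\in\ol\Omega:\operatorname{dist}(x,\partial\Omega)\ge\rho/2\}$. That choice needs no component bookkeeping or filling in of holes, since the later Carleman argument only uses that $K_1$ is a fixed compact set. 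Your filled-in neighbourhood of $K$ gives a sharper vanishing region at the cost of that topology, so filling in the holes is a feature of your construction rather than a necessity.
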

	
	\begin{proof}
		Existence and uniqueness follow from uniform ellipticity. Set $f_\zeta=e^{\zeta\cdot x}|_{\p\Omega}$. Since $u_\zeta$ and $e^{\zeta\cdot x}$ have the same boundary trace, their difference $w_\zeta$ belongs to $H_0^1(\Omega)$. The exponential is harmonic because $\zeta\cdot\zeta=0$, and equality of the DN maps gives $(A\nabla u_\zeta)\cdot\nu=\p_\nu e^{\zeta\cdot x}$ on $\p\Omega$ in $H^{-1/2}(\p\Omega)$.
		
		Let $\nu$ be the outward unit normal on $\p\Omega$. Since $\p\Omega$ is smooth and compact, we may choose $\rho>0$, depending only on $K$ and $\Omega$, such that $2\rho<\operatorname{dist}(K,\p\Omega)$ and the map $\Phi(p,t)=p-t\nu(p)$ is a smooth diffeomorphism from $\p\Omega\times(-\rho,\rho)$ onto $\mathcal U=\{x\in\R^n:\operatorname{dist}(x,\p\Omega)<\rho\}$. The positive parameter range $0<t<\rho$ parametrizes $\mathcal U\cap\Omega$, while $-\rho<t<0$ parametrizes $\mathcal U\setminus\ol\Omega$. Thus, $\mathcal U$ is a two-sided collar of the boundary whose interior part is disjoint from $K$. In particular, $A=I$ and $\Delta w_\zeta=0$ in $\mathcal U\cap\Omega$.
		
		Extend $w_\zeta$ by zero outside $\Omega$, and denote the restriction of this extension to $\mathcal U$ by $\widetilde w_\zeta$. Since $w_\zeta\in H_0^1(\Omega)$, its zero extension belongs to $H^1(\R^n)$. For $\varphi\in C_c^\infty(\mathcal U)$, extend $\varphi$ by zero to $\R^n$ and use the same notation. Since $A=I$ wherever $\nabla\varphi$ can be nonzero in $\Omega$, the weak definitions of the DN maps give
		\begin{equation}\label{eq:zero-extension-harmonic}
			\begin{aligned}
				\int_{\mathcal U}\nabla\widetilde w_\zeta\cdot\nabla\varphi\,dx=\int_\Omega\bigl(A\nabla u_\zeta-\nabla e^{\zeta\cdot x}\bigr)\cdot\nabla\varphi\,dx=\langle(\Lambda_A-\Lambda_I)f_\zeta,\varphi|_{\p\Omega}\rangle=0.
			\end{aligned}
		\end{equation}
		Thus, $\widetilde w_\zeta$ is distributionally harmonic in the entire collar $\mathcal U$, including across $\p\Omega$.
		
		Weyl's lemma shows that $\widetilde w_\zeta$ is smooth and harmonic in $\mathcal U$. Every connected component of $\mathcal U$ contains a nonempty exterior open part, parametrized by $-\rho<t<0$, on which $\widetilde w_\zeta=0$. Unique continuation for harmonic functions implies $\widetilde w_\zeta=0$ on every component of $\mathcal U$. In particular, $w_\zeta=0$ at every point of $\Omega$ whose distance from $\p\Omega$ is less than $\rho$.
		
		Define $K_1=\{x\in\ol\Omega:\operatorname{dist}(x,\p\Omega)\ge\rho/2\}$. This is a compact subset of $\Omega$, depends only on $K$ and $\Omega$, and satisfies $K\Subset\operatorname{int}(K_1)$ by the choice of $\rho$. Since $\Omega\setminus K_1\subset\mathcal U\cap\Omega$, the preceding vanishing proves \eqref{eq:w-compact-support}. Finally, multiplication by the smooth nonvanishing function $e^{-\zeta\cdot x}$ preserves $H^1(\Omega)$ and does not change the support. Hence, $r_\zeta=e^{-\zeta\cdot x}w_\zeta\in H^1(\Omega)$, is supported in $K_1$, and satisfies \eqref{eq:r-def}.
	\end{proof}
	
	The function $u_\zeta$ has the usual complex geometrical optics (CGO) form, but no smallness of $r_\zeta$ is asserted at this stage. The point of Lemma~\ref{lem:compact-corrector} is that the correction has support in a fixed compact set. This support property uses both equality of the DN maps and the fact that $A=I$ near the boundary; it is not a property of the Dirichlet solution for an arbitrary conductivity. Therefore, the Carleman estimate can be applied directly to the actual correction, without multiplying it by a cutoff and introducing additional commutator terms. Its quantitative estimate is proved in Proposition~\ref{prop:corrector-estimate}. After extending $A$ by $I$ and $r_\zeta$ by zero outside $\Omega$, the same formula defines a whole space solution, which agrees with $e^{\zeta\cdot x}$ outside $K_1$.
	
	For a complex null vector $\zeta\in\C^n$, define $D_\zeta r=\nabla r+\zeta r$. Since $w_\zeta=e^{\zeta\cdot x}r_\zeta$, direct differentiation gives
	\begin{equation}\label{eq:D-zeta-relation}
		\nabla w_\zeta=e^{\zeta\cdot x}D_\zeta r_\zeta.
	\end{equation}
	
	\para{The conjugated equation} The divergence free condition gives the following conjugated equation.
	
	\begin{lemma}[Conjugated divergence-form equation]\label{lem:conjugated-equation}
		Suppose that $A=I+H$ and $\diver H=0$. Then the corrector in Lemma~\ref{lem:compact-corrector} satisfies
		\begin{equation}\label{eq:conjugated-equation}
			P_\zeta r_\zeta+Q_{H,\zeta}r_\zeta
			=\zeta^TH\zeta,
		\end{equation}
		where $P_\zeta=-\Delta-2\zeta\cdot\nabla$ and $\quad Q_{H,\zeta}r=-\diver(H\nabla r)-2H\zeta\cdot\nabla r-(\zeta^TH\zeta)r$. 
	\end{lemma}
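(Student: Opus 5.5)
Since $u_\zeta$ is $A$-harmonic, we have $\diver(A\nabla u_\zeta)=0$ in the weak sense. We substitute $u_\zeta=e^{\zeta\cdot x}(1+r_\zeta)$ and conjugate by $e^{\zeta\cdot x}$, using the divergence-free condition to simplify the terms involving $H$. The identity is first obtained in the distributional sense on $\Omega$. Because $\supp r_\zeta\subset K_1$, it also holds after extension by zero to $\R^n$.

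\textbf{Gradient and the $I$-part.} First compute $\nabla u_\zeta=e^{\zeta\cdot x}\bigl(\zeta(1+r_\zeta)+\nabla r_\zeta\bigr)=e^{\zeta\cdot x}\bigl(\zeta+D_\zeta r_\zeta\bigr)$. The term $e^{\zeta\cdot x}$ is harmonic since $\zeta\cdot\zeta=0$. By \eqref{eq:D-zeta-relation} and the same null condition,
\begin{equation}
\diver(\nabla w_\zeta)=e^{\zeta\cdot x}\bigl(\Delta r_\zeta+2\zeta\cdot\nabla r_\zeta\bigr).
\end{equation}
So the $I$-part of $-\diver(A\nabla u_\zeta)$ equals $e^{\zeta\cdot x}P_\zeta r_\zeta$.

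\textbf{The $H$-part.} For the $H$-part, write $H\nabla u_\zeta=e^{\zeta\cdot x}H\bigl(\zeta(1+r_\zeta)+\nabla r_\zeta\bigr)$ and apply the product rule $\diver(e^{\zeta\cdot x}V)=e^{\zeta\cdot x}(\diver V+\zeta\cdot V)$ with $V=H\zeta(1+r_\zeta)+H\nabla r_\zeta$. Since $\diver H=0$ and $\zeta$ is constant, $\diver(H\zeta)=0$, so $\diver(H\zeta(1+r_\zeta))=H\zeta\cdot\nabla r_\zeta$. Symmetry of $H$ gives $\zeta\cdot H\nabla r=H\zeta\cdot\nabla r$. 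Collecting terms,
\begin{equation}
-\diver(H\nabla u_\zeta)=e^{\zeta\cdot x}\Bigl(-\diver(H\nabla r_\zeta)-2H\zeta\cdot\nabla r_\zeta-(\zeta^TH\zeta)r_\zeta-\zeta^TH\zeta\Bigr).
\end{equation}
Adding the two parts, setting the sum to zero, and dividing by $e^{\zeta\cdot x}$ gives \eqref{eq:conjugated-equation}.

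\textbf{Main obstacle: low regularity.} The only delicate point is justifying these manipulations in weak form, since $r_\zeta\in H^1$ and $H\in H^{s_*}\subset L^\infty$. The plan is to test against $e^{-\zeta\cdot x}\varphi$ with $\varphi\in C_c^\infty(\Omega)$, which is a legitimate test function. From this we read off the weak formulation
\begin{equation}
\int (I+H)(\zeta+D_\zeta r_\zeta)\cdot(\nabla\varphi-\zeta\varphi)\,dx=0.
\end{equation}
We then expand and check that it coincides with the distributional meaning of \eqref{eq:conjugated-equation}. Here $\diver H=0$ is used in the distributional form $\int H\zeta\cdot\nabla\psi\,dx=0$ for the test function $\psi=\varphi$. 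The product $(1+r_\zeta)\varphi$ is not smooth, so it is not directly admissible as a test function. If needed, we handle this by density: $H\zeta\in L^\infty$ is divergence free, so $\int H\zeta\cdot\nabla\psi\,dx=0$ extends to all $\psi\in H^1_0$ by approximation.
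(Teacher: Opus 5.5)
Your proposal is correct and follows essentially the paper's argument: you conjugate by $e^{\zeta\cdot x}$, use the null condition, the symmetry of $H$ and $\diver(H\zeta)=0$, and then justify the computation for $r_\zeta\in H^1$ by a density argument. The only difference is where density is applied. The paper proves the conjugation identity for smooth $r$ and approximates $r_\zeta$ in $H^1$, whereas you test against $e^{-\zeta\cdot x}\varphi$ and extend $\int H\zeta\cdot\nabla\psi\,dx=0$ to $\psi\in H^1_0$; both are valid.
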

	
	\begin{proof}
		For a smooth scalar function $r$,
		\begin{equation}\label{eq:conjugation-expansion}
			e^{-\zeta\cdot x}[-\diver(A\nabla)]e^{\zeta\cdot x}r
			=-\diver(A\nabla r)-2A\zeta\cdot\nabla r-(\zeta^TA\zeta)r,
		\end{equation}
		where we used $\diver(A\zeta)=0$ from $\diver A=0$. Since $A\in L^\infty(\Omega)$, both sides of \eqref{eq:conjugation-expansion} depend continuously on $r\in H^1(\Omega)$ as distributions. Approximation by smooth functions extends the identity to $H^1(\Omega)$. Apply \eqref{eq:conjugation-expansion} to $1+r_\zeta$. Since $\zeta\cdot\zeta=0$ and $A=I+H$, the equation for $u_\zeta=e^{\zeta\cdot x}(1+r_\zeta)$ becomes $-\Delta r_\zeta-2\zeta\cdot\nabla r_\zeta
			-\diver(H\nabla r_\zeta)-2H\zeta\cdot\nabla r_\zeta-(\zeta^TH\zeta)r_\zeta=\zeta^TH\zeta$, which is \eqref{eq:conjugated-equation}.
	\end{proof}

	\section{Carleman estimates for compactly supported functions and corrector bounds}\label{sec:carleman}
	
	Throughout Sections~\ref{sec:carleman} and \ref{sec:fourier}, we assume all the hypotheses of Theorem~\ref{thm:conductivity-main} except for the smallness condition on $\|H\|_{H^{s_*}(\R^n)}$, unless stated otherwise. We use the correctors constructed in Lemma~\ref{lem:compact-corrector}.
	
	\subsection{Semiclassical Sobolev spaces and the free estimate}
	
	For $h>0$ and $s\in\R$, define
	\begin{equation}\label{eq:semiclassical-Sobolev}
		\|v\|_{H^s_{\mathrm{scl}}(\R^n)}=\big\|\langle h\xi\rangle^s\wh v(\xi)\big\|_{L^2_\xi},
		\quad \langle h\xi\rangle=(1+h^2|\xi|^2)^{1/2},
	\end{equation}
	where the subscript $\xi$ indicates that the $L^2$ norm is taken in the Fourier variable. In particular, Plancherel's theorem gives
	\begin{equation}\label{eq:H1-scl-equivalence}
		\|v\|_{\Hs}^2\asymp\|v\|_{L^2}^2+\|h\nabla v\|_{L^2}^2.
	\end{equation}
	Here and below, $A\asymp B$ means that $cB\le A\le CB$ for constants $c,C>0$ independent of the parameters under consideration. In particular, in \eqref{eq:H1-scl-equivalence}, these constants are independent of $h$ and $v$. The $L^2$ pairing identifies $\Hsm$ with the continuous dual of $\Hs$, with norm equivalence constants independent of $h$. Let
	\begin{equation}\label{eq:null-frame-set}
		\mathcal Z=\left\{a+\mathsf{i}b:\, a,b\in\R^n,\ a\cdot b=0,\ |a|=|b|=1\right\}.
	\end{equation}
	This is the set of complex null vectors whose real and imaginary parts both have length one. For $\zeta_0\in\mathcal Z$, set
	$P_{\zeta_0,h}=-h^2\Delta-2h\zeta_0\cdot\nabla$.
	
	\begin{proposition}[Limiting Carleman estimate for compactly supported functions]\label{prop:compact-Carleman}
		Let $\mathcal O\subset\R^n$ be a bounded open set. There exist $h_0>0$ and $C>0$, depending only on $n$ and $\mathcal O$, such that
		\begin{equation}\label{eq:compact-Carleman}
			h\|v\|_{\Hs}\le C\|P_{\zeta_0,h}v\|_{\Hsm}
		\end{equation}
		for every $0<h<h_0$, every $\zeta_0\in\mathcal Z$, and every $v\in C_c^\infty(\mathcal O)$. The constants are independent of $h$, $\zeta_0$, and $v$. The same estimate \eqref{eq:compact-Carleman} holds for every $v\in H^1(\R^n)$ satisfying $\supp v\Subset\mathcal O$, where $P_{\zeta_0,h}v$ is understood in the distributional sense.
	\end{proposition}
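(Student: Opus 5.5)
The plan is to reduce \eqref{eq:compact-Carleman} to the classical Carleman estimate with a linear limiting weight, as in Kenig--Sj\"ostrand--Uhlmann and Dos Santos Ferreira--Kenig--Salo--Uhlmann. Uniformity in $\zeta_0$ will come from rotation invariance.

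\emph{Step 1 (reduction to a real weight).} Write $\zeta_0=a+\mathsf{i}b$ with $a,b$ orthonormal. A direct computation, using $\zeta_0\cdot\zeta_0=0$, gives
\[
P_{\zeta_0,h}=e^{-\zeta_0\cdot x/h}(-h^2\Delta)e^{\zeta_0\cdot x/h}
=e^{-\mathsf{i}b\cdot x/h}\,P_{\varphi}\,e^{\mathsf{i}b\cdot x/h},
\qquad
P_\varphi:=e^{-\varphi/h}(-h^2\Delta)e^{\varphi/h},\quad \varphi=a\cdot x .
\]
Multiplication by $e^{\pm\mathsf{i}b\cdot x/h}$ preserves supports. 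It is bounded on $\Hs$ with constants independent of $h$ and $b$, since $h\nabla(e^{\mathsf{i}b\cdot x/h}v)=e^{\mathsf{i}b\cdot x/h}(h\nabla v+\mathsf{i}bv)$ and $|b|=1$. By duality it is therefore also bounded on $\Hsm$. It thus suffices to prove
\[
h\|v\|_{\Hs}\le C\|P_\varphi v\|_{\Hsm},\qquad v\in C_c^\infty(\mathcal O).
\]
Fix a ball $B$ centered at the origin with $\mathcal O\subset B$, and pick a rotation $R$ with $Ra=e_1$. Both $\Delta$ and the semiclassical norms are rotation invariant, so we may assume $\varphi=x_1$ and $\mathcal O\subset B$. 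This makes every constant depend only on $n$ and the radius of $B$, hence it is uniform over $\zeta_0\in\mathcal Z$.

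\emph{Step 2 ($L^2$ estimate via convexification).} For a small fixed $\varepsilon>0$, set $\varphi_\varepsilon=x_1+\frac{h}{2\varepsilon}x_1^2$. Split $P_{\varphi_\varepsilon}=A+\mathsf{i}B$ with $A,B$ formally self-adjoint:
\[
A=-h^2\Delta-|\varphi_\varepsilon'|^2,\qquad B=-\mathsf{i}\bigl(2h\varphi_\varepsilon'\p_1+h\varphi_\varepsilon''\bigr).
\]
For $v\in C_c^\infty(B)$ we have $\|P_{\varphi_\varepsilon}v\|^2=\|Av\|^2+\|Bv\|^2+(\mathsf{i}[A,B]v,v)$. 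The commutator gives $(\mathsf{i}[A,B]v,v)\ge \frac{c h^2}{\varepsilon}\|v\|^2-Ch^2\|h\nabla v\|\,\|v\|$ up to terms absorbed by $\|Av\|^2+\|Bv\|^2$. Here we use that $\varphi_\varepsilon''=h/\varepsilon$ and that $|x_1|$ is bounded on $B$. Combining this with the Poincar\'e inequality on $B$ and with the control of $\|h\nabla v\|$ by $\|Av\|+\|v\|$, the standard computation yields $\frac{h}{\sqrt\varepsilon}\|v\|_{\Hs}\le C\|P_{\varphi_\varepsilon}v\|_{L^2}$ for $h\ll\varepsilon$. Next, $P_{\varphi_\varepsilon}=e^{-hx_1^2/2\varepsilon}\,(\text{conj.})$ differs from $P_\varphi$ after conjugation by $e^{x_1^2/(2\varepsilon)}$. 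This factor and its inverse are bounded with bounded derivatives on $B$ uniformly in $h$, so the weights are comparable. Replacing $v$ by $e^{x_1^2/2\varepsilon}v$ and fixing $\varepsilon$ gives $h\|v\|_{\Hs}\le C\|P_\varphi v\|_{L^2}$.

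\emph{Step 3 (shift to $\Hs\to\Hsm$; the main obstacle).} Upgrading the right-hand norm from $L^2$ to $\Hsm$ is the delicate point, since $\langle hD\rangle^{-1}$ destroys compact support. I would follow the KSU/DKSU device. Choose a larger ball $\widetilde B\Supset B$ and a cutoff $\chi\in C_c^\infty(\widetilde B)$ with $\chi=1$ near $\ol B$, and apply the $L^2$ estimate (on $\widetilde B$) to $w=\chi\langle hD\rangle^{-1}v$. First, $\|w\|_{\Hs}\gtrsim\|v\|_{L^2}-O(h^\infty)\|v\|_{L^2}$, because $(1-\chi)\langle hD\rangle^{-1}$ acting on functions supported in $B$ is $O(h^\infty)$ by pseudolocality. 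Second, $P_\varphi w=\chi\langle hD\rangle^{-1}P_\varphi v+[P_\varphi,\chi]\langle hD\rangle^{-1}v$, and $P_\varphi$ has constant coefficients, so it commutes with $\langle hD\rangle^{-1}$. The commutator term has coefficients supported where $\chi$ is nonconstant, at positive distance from $\supp v$, so it is again $O(h^\infty)\|v\|_{L^2}$. This gives $h\|v\|_{L^2}\le C\|P_\varphi v\|_{\Hsm}$. Applying the same argument to $\langle hD\rangle^{1/2}$-type shifts, or combining it with the trivial bound $\|h\nabla v\|^2\lesssim |(P_\varphi v,v)|+\|v\|^2$ in the form $\|h\nabla v\|^2\le \|P_\varphi v\|_{\Hsm}\|v\|_{\Hs}+C\|v\|^2$, upgrades the left side to $h\|v\|_{\Hs}$.

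\emph{Step 4 (extension to $H^1$).} Let $v\in H^1(\R^n)$ with $\supp v\Subset\mathcal O$. Mollify to obtain $v_j\in C_c^\infty(\mathcal O)$ with $v_j\to v$ in $H^1$. Then $P_{\zeta_0,h}v_j\to P_{\zeta_0,h}v$ in $H^{-1}$, and for fixed $h$ the $\Hsm$ norm is equivalent to the $H^{-1}$ norm. Passing to the limit gives \eqref{eq:compact-Carleman} for $v$.
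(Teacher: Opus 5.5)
Your proof is correct in outline and has the same structure as the paper's. Modulation by $e^{\mathsf{i}b\cdot x/h}$ reduces $P_{\zeta_0,h}$ to $e^{-a\cdot x/h}(-h^2\Delta)e^{a\cdot x/h}$, rotations give uniformity over $\zeta_0\in\mathcal Z$, and mollification handles $H^1$ functions.

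You differ from the paper in how you get the core estimate $h\|v\|_{\Hs}\le C\|e^{-x_1/h}(-h^2\Delta)e^{x_1/h}v\|_{\Hsm}$. The paper takes it from Krupchyk--Uhlmann, Proposition~2.2, with the Sobolev shift $s=-1$ built into their convexified estimate. You prove it by hand in three steps:
\begin{itemize}
\item an $L^2$ estimate;
\item a shift to $\Hsm$ through $w=\chi\langle hD\rangle^{-1}v$;
\item the identity $\|h\nabla v\|^2=\operatorname{Re}(P_\varphi v,v)+\|v\|^2$, which upgrades $L^2$ to $\Hs$ on the left.
\end{itemize}
The shift is valid precisely because $P_\varphi=-h^2\Delta-2h\p_1-1$ has constant coefficients. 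It therefore commutes with $\langle hD\rangle^{-1}$, and only the cutoff commutator survives, which is $O(h^\infty)$ by pseudolocality. This makes the argument self-contained. The citation in the paper is shorter and covers general limiting weights, which are not needed here.

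There is a sign error in Step~2 that you need to fix. With your convention,
$P_{\varphi_\varepsilon}=e^{-\varphi_\varepsilon/h}(-h^2\Delta)e^{\varphi_\varepsilon/h}=-h^2\Delta-(\varphi_\varepsilon')^2-(2h\varphi_\varepsilon'\p_1+h\varphi_\varepsilon'')$.
So the correct self-adjoint part is $B=\mathsf{i}(2h\varphi_\varepsilon'\p_1+h\varphi_\varepsilon'')$, the negative of what you wrote. An exact computation then gives $\mathsf{i}[A,B]=4h^2\varphi_\varepsilon''\p_1^2-4h\varphi_\varepsilon''(\varphi_\varepsilon')^2$. This is a nonpositive operator for your choice $\varphi_\varepsilon=x_1+\frac{h}{2\varepsilon}x_1^2$. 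Your two sign errors cancel in the inequality you display, but the true commutator has the wrong sign.

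You have two ways to repair this:
\begin{itemize}
\item Keep convexification but use $\varphi_\varepsilon=x_1-\frac{h}{2\varepsilon}x_1^2$ with your conjugation convention. This is the paper's $\psi_a+\frac{h}{2\varepsilon}\psi_a^2$ with $\psi_a=-a\cdot x$.
\item Drop convexification altogether, which is simpler. The operators $A=-h^2\Delta-1$ and $B=2\mathsf{i}h\p_1$ commute, so $\|P_\varphi v\|^2=\|Av\|^2+\|Bv\|^2\ge4h^2\|\p_1v\|^2\ge ch^2\|v\|^2$ by the one-dimensional Poincar\'e inequality on the ball. The same energy identity then gives the $\Hs$ bound.
\end{itemize}
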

	
	\begin{proof}
		Choose concentric Euclidean balls $B\Subset\widetilde B$, centered at the origin, such that $\ol{\mathcal O}\subset B$. Fix $a\in\mathbb S^{n-1}$ and set $\psi_a(x)=-a\cdot x$. Since $\nabla\psi_a=-a$ is constant and nonzero, $\psi_a$ is a limiting Carleman weight for the Euclidean metric; see \cite[Section~3]{DosSantosFerreiraKenigSaloUhlmann2009} and \cite[Section~2]{KrupchykUhlmann2014}. Apply \cite[Proposition~2.2]{KrupchykUhlmann2014} on $B$, using $\widetilde B$ as the larger ambient domain and setting the magnetic and electric potentials in that proposition equal to zero. In the notation of the cited result, $L_{0,0}=-\Delta$. Taking the Carleman weight to be $\psi_a$ gives constants $h_a>0$ and $C_a>0$ such that
		\begin{equation}\label{eq:real-Carleman}
			h\|v\|_{\Hs}\le C_a\big\|e^{-a\cdot x/h}(-h^2\Delta)e^{a\cdot x/h}v\big\|_{\Hsm}
		\end{equation}
		for every $v\in C_c^\infty(B)$ and every $0<h<h_a$. Indeed, the conjugated operator in \cite[Proposition~2.2]{KrupchykUhlmann2014} is $e^{\psi_a/h}(h^2L_{0,0})e^{-\psi_a/h}$, which is exactly $e^{-a\cdot x/h}(-h^2\Delta)e^{a\cdot x/h}$.
		
		The Sobolev orders in \eqref{eq:real-Carleman} follow from the estimate with a gain of two derivatives in \cite[Proposition~2.1 and the proof of Proposition~2.2]{KrupchykUhlmann2014}. Let $\psi_{a,\varepsilon}=\psi_a+\frac{h}{2\varepsilon}\psi_a^2$ be the convexified weight. Taking $s=-1$ in that estimate places the conjugated operator in $H^{-1}_{\mathrm{scl}}$ and controls the function in $H^1_{\mathrm{scl}}$, with the factor $h/\sqrt{\varepsilon}$ on the left. The parameter $\varepsilon>0$ is chosen sufficiently small and then fixed independently of $h$, so this factor is a fixed multiple of $h$. To pass back to the original weight, choose $\chi\in C_c^\infty(\widetilde B)$ such that $\chi=1$ in a neighborhood of $\ol B$. Applying the convexified estimate to $e^{\psi_a^2/(2\varepsilon)}v$ uses the identity $e^{-\psi_{a,\varepsilon}/h}e^{\psi_a^2/(2\varepsilon)}v=e^{-\psi_a/h}v$. Since $v$ is supported in $B$, multiplication by $e^{\psi_a^2/(2\varepsilon)}$ and its inverse may be replaced by multiplication by the compactly supported smooth functions $\chi e^{\psi_a^2/(2\varepsilon)}$ and $\chi e^{-\psi_a^2/(2\varepsilon)}$. These multiplication operators are bounded on $H^s_{\mathrm{scl}}(\R^n)$ for $-1\le s\le1$, with bounds independent of $h$ and $a\in\mathbb S^{n-1}$. This explains the $H^1_{\mathrm{scl}}$ and $H^{-1}_{\mathrm{scl}}$ norms in \eqref{eq:real-Carleman}. The choice $s=-1$ will be used below because the divergence-form perturbation naturally maps $H^1_{\mathrm{scl}}$ into $H^{-1}_{\mathrm{scl}}$ when only the $L^\infty$ norm of its coefficient is available.
		
		It remains to show that the constants in \eqref{eq:real-Carleman} may be chosen independently of $a\in\mathbb S^{n-1}$. Let $e_1=(1,0,\ldots,0)\in\R^n$, and let $C=C_{e_1}$ and $h_0=h_{e_1}$ be the constants supplied by \eqref{eq:real-Carleman} for the single direction $e_1$. For each $a\in\mathbb S^{n-1}$, choose an orthogonal map $R_a\in O(n)$ such that $R_ae_1=a$. Since $R_a^TR_a=I$, one has $R_a^Ta=e_1$, and $\psi_a(R_ax)=-a\cdot R_ax=-(R_a^Ta)\cdot x=-e_1\cdot x=\psi_{e_1}(x)$. Define $T_av=v\circ R_a$. Since $B$ and $\widetilde B$ are balls centered at the origin, they are invariant under $R_a$, and $T_a$ maps $C_c^\infty(B)$ onto itself. Orthogonality gives $\Delta(T_a\phi)=T_a(\Delta\phi)$ for every smooth function $\phi$. Moreover, $\wh{T_av}(\xi)=\wh v(R_a\xi)$, so the change of variables $\eta=R_a\xi$ and the identity $|R_a\xi|=|\xi|$ give $\|T_av\|_{H^s_{\mathrm{scl}}}=\|v\|_{H^s_{\mathrm{scl}}}$ for every $s\in\R$.
		
		The conjugated operators are intertwined by $T_a$. Indeed, $a\cdot R_ax=e_1\cdot x$, and hence $e^{\pm e_1\cdot x/h}T_av=T_a(e^{\pm a\cdot x/h}v)$. Combining this identity with $\Delta T_a=T_a\Delta$, we obtain
		\begin{equation*}
			\begin{aligned}
				e^{-e_1\cdot x/h}(-h^2\Delta)e^{e_1\cdot x/h}T_av=e^{-e_1\cdot x/h}(-h^2\Delta)T_a\bigl(e^{a\cdot x/h}v\bigr)=T_a\bigl(e^{-a\cdot x/h}(-h^2\Delta)e^{a\cdot x/h}v\bigr).
			\end{aligned}
		\end{equation*}
		For $0<h<h_0$, apply the special case $a=e_1$ of \eqref{eq:real-Carleman} to $T_av$. The preceding identities give
		\begin{equation*}
			\begin{aligned}
				h\|v\|_{\Hs}=h\|T_av\|_{\Hs}\le C\big\|e^{-e_1\cdot x/h}(-h^2\Delta)e^{e_1\cdot x/h}T_av\big\|_{\Hsm}=C\big\|e^{-a\cdot x/h}(-h^2\Delta)e^{a\cdot x/h}v\big\|_{\Hsm}.
			\end{aligned}
		\end{equation*}
		This shows that the same constants $C=C_{e_1}$ and $h_0=h_{e_1}$ may be used in \eqref{eq:real-Carleman} for every $a\in\mathbb S^{n-1}$. After decreasing $h_0$, we may assume that $0<h_0\le1$.
		
		Let $b\in\mathbb S^{n-1}$ satisfy $a\cdot b=0$, and define $M_bv=e^{\mathsf{i}b\cdot x/h}v$. With the Fourier transform convention in Section~\ref{sec:fourier}, one has $\wh{M_bv}(\xi)=\wh v(\xi-b/h)$. After the change of variables $\eta=\xi-b/h$, one obtains
		\[
		\|M_bv\|_{H^s_{\mathrm{scl}}}^2=\int_{\R^n}\langle h\eta+b\rangle^{2s}|\wh v(\eta)|^2\,d\eta.
		\] 
		The triangle inequality, applied once with $b$ and once with $-b$, shows that $\langle h\eta+b\rangle\asymp\langle h\eta\rangle$ uniformly in $h>0$, $\eta\in \R^n$, and $b\in\mathbb S^{n-1}$. For $-1\le s\le1$, this gives
		\begin{equation}\label{eq:modulation-Sobolev}
			C_s^{-1}\|v\|_{H^s_{\mathrm{scl}}}\le\|M_bv\|_{H^s_{\mathrm{scl}}}\le C_s\|v\|_{H^s_{\mathrm{scl}}},
		\end{equation}
		and the same inequalities hold with $M_b$ replaced by $M_b^{-1}$.
		
		Set $L_{a,h}\cdot=e^{-a\cdot x/h}(-h^2\Delta)(e^{a\cdot x/h}\cdot)$. Direct differentiation gives $L_{a,h}=-h^2\Delta-2ha\cdot\nabla-|a|^2$, while $M_b^{-1}\nabla (M_b\cdot)=(\nabla+\mathsf{i}b/h)\cdot$. Combining these identities gives
		\begin{equation}\label{eq:modulated-Carleman-operator}
			M_b^{-1}L_{a,h}M_b=-h^2\Delta-2h(a+\mathsf{i}b)\cdot\nabla+\bigl(|b|^2-|a|^2-2\mathsf{i}a\cdot b\bigr).
		\end{equation}
		The zeroth-order term vanishes because $|a|=|b|=1$ and $a\cdot b=0$. For $\zeta_0=a+\mathsf{i}b$, we have
		\begin{equation}\label{eq:modulated-operator-identity}
			M_b^{-1}L_{a,h}M_b=P_{\zeta_0,h}\quad\text{equivalently}\quad L_{a,h}M_b=M_bP_{\zeta_0,h}.
		\end{equation}
		
		Let $v\in C_c^\infty(\mathcal O)$. Since multiplication by $M_b$ does not change the support, one has $M_bv\in C_c^\infty(\mathcal O)\subset C_c^\infty(B)$. Applying the estimate \eqref{eq:real-Carleman} to $M_bv$ and using the modulation estimates \eqref{eq:modulation-Sobolev} and the operator identity \eqref{eq:modulated-operator-identity}, we obtain
		\begin{equation*}
			\begin{aligned}
				h\|v\|_{\Hs}\le Ch\|M_bv\|_{\Hs}\le C\|L_{a,h}M_bv\|_{\Hsm}=C\|M_bP_{\zeta_0,h}v\|_{\Hsm}\le C\|P_{\zeta_0,h}v\|_{\Hsm}.
			\end{aligned}
		\end{equation*}
		This proves \eqref{eq:compact-Carleman} for $v\in C_c^\infty(\mathcal O)$, with constants independent of $\zeta_0\in\mathcal Z$.
		
		It remains to extend the estimate to compactly supported $H^1$ functions. Let $v\in H^1(\R^n)$ satisfy $\supp v\Subset\mathcal O$, and choose an open set $\mathcal O_1$ such that $\supp v\Subset\mathcal O_1\Subset\mathcal O$. Standard mollification gives a sequence $v_k\in C_c^\infty(\mathcal O_1)\subset C_c^\infty(\mathcal O)$ such that $v_k\to v$ in $H^1(\R^n)$. The Fourier multiplier of $P_{\zeta_0,h}$ is $p_{\zeta_0,h}(\xi)=h^2|\xi|^2-2\mathsf{i}h\zeta_0\cdot\xi$. Since $|\zeta_0|=\sqrt2$ on $\mathcal Z$, one has $|p_{\zeta_0,h}(\xi)|\le C\langle h\xi\rangle^2$ uniformly for $0<h\le1$ and $\zeta_0\in\mathcal Z$. Thus,
		\begin{equation}\label{eq:P-H1-Hminus1}
			\begin{aligned}
				\|P_{\zeta_0,h}f\|_{\Hsm}^2=\int_{\R^n}\langle h\xi\rangle^{-2}|p_{\zeta_0,h}(\xi)|^2|\wh f(\xi)|^2\,d\xi\le C\int_{\R^n}\langle h\xi\rangle^2|\wh f(\xi)|^2\,d\xi
				=C\|f\|_{\Hs}^2.
			\end{aligned}
		\end{equation}
		Thus, the operator $P_{\zeta_0,h}:H^1_{\mathrm{scl}}(\R^n)\to H^{-1}_{\mathrm{scl}}(\R^n)$ is continuous, uniformly for the stated values of $h$ and $\zeta_0$. The convergence $v_k\to v$ in $H^1(\R^n)$ implies convergence in $H^1_{\mathrm{scl}}(\R^n)$ for each fixed $0<h<h_0$, and \eqref{eq:P-H1-Hminus1} gives $P_{\zeta_0,h}v_k\to P_{\zeta_0,h}v$ in $H^{-1}_{\mathrm{scl}}(\R^n)$. Passing to the limit in \eqref{eq:compact-Carleman} proves the final assertion.
	\end{proof}
	
	\para{The divergence-form perturbation} For $\zeta_0\in\mathcal Z$, define
	\begin{equation}\label{eq:Q-h-def}
		Q_{H,\zeta_0,h}\cdot
		=-h^2\diver(H\nabla \cdot)-2hH\zeta_0\cdot\nabla \cdot-(\zeta_0^TH\zeta_0)\cdot.
	\end{equation}
	
	\begin{lemma}[Perturbation estimate]\label{lem:perturbation-estimate}
		There is a constant $C$, depending only on the dimension, such that
		\begin{equation}\label{eq:perturbation-estimate}
			\|Q_{H,\zeta_0,h}v\|_{\Hsm}
			\le C\|H\|_{L^\infty}\|v\|_{\Hs}
		\end{equation}
		for all $0<h\le1$, $\zeta_0\in\mathcal Z$, and $v\in\Hs$. Moreover,
		\begin{equation}\label{eq:source-Hminus1}
			\|\zeta_0^TH\zeta_0\|_{\Hsm}
			\le C\|H\|_{L^2}.
		\end{equation}
	\end{lemma}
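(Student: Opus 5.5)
The plan is to argue by duality: the estimate in $\Hsm$ follows from controlling the pairing of $Q_{H,\zeta_0,h}v$ against an arbitrary $\varphi\in C_c^\infty(\R^n)$ by $\|\varphi\|_{\Hs}$. Since the $L^2$ pairing identifies $\Hsm$ with the dual of $\Hs$, with constants independent of $h$, I will show
\[
\Bigl|\int_{\R^n}(Q_{H,\zeta_0,h}v)\,\varphi\,dx\Bigr|\le C\|H\|_{L^\infty}\|v\|_{\Hs}\|\varphi\|_{\Hs}.
\]
Conjugate-linear and bilinear pairings give the same norm here, since $\|\bar\varphi\|_{\Hs}\asymp\|\varphi\|_{\Hs}$ by \eqref{eq:H1-scl-equivalence}.

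\textbf{Term-by-term bounds.} The three terms of \eqref{eq:Q-h-def} are handled separately.

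The divergence term is interpreted distributionally. Integrating by parts gives
\[
\int -h^2\diver(H\nabla v)\,\varphi\,dx=\int H(h\nabla v)\cdot(h\nabla\varphi)\,dx.
\]
By Cauchy--Schwarz together with the pointwise operator-norm bound $|H(x)w|\le\|H\|_{L^\infty}|w|$, this is at most $\|H\|_{L^\infty}\|h\nabla v\|_{L^2}\|h\nabla\varphi\|_{L^2}$.

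The first-order term is bounded the same way, using $|H\zeta_0\cdot h\nabla v|\le\|H\|_{L^\infty}|\zeta_0|\,|h\nabla v|$ and $|\zeta_0|=\sqrt2$. It gives $\le 2\sqrt2\,\|H\|_{L^\infty}\|h\nabla v\|_{L^2}\|\varphi\|_{L^2}$.

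The zeroth-order term satisfies $|\zeta_0^TH\zeta_0|\le 2\|H\|_{L^\infty}$ pointwise. It therefore contributes $\le 2\|H\|_{L^\infty}\|v\|_{L^2}\|\varphi\|_{L^2}$.

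One detail needs care in the first two bounds: the bilinear form $\zeta_0^TH\zeta_0$ and the dot products are taken without conjugation. I will bound them by Cauchy--Schwarz on $\C^n$, applied to $|H w\cdot z|\le |Hw|\,|z|$.

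\textbf{Assembling.} Summing the three bounds and invoking \eqref{eq:H1-scl-equivalence} yields \eqref{eq:perturbation-estimate}, with a constant depending only on $n$ through the norm-equivalence constants. The constant is uniform in $0<h\le1$; in fact $h\le1$ is not even needed beyond the normalization. Density of $C_c^\infty$ in $\Hs$ completes the argument.

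\textbf{Source bound.} For \eqref{eq:source-Hminus1}, I use $\langle h\xi\rangle^{-1}\le1$, which gives $\|f\|_{\Hsm}\le\|f\|_{L^2}$ by Plancherel. Then
\[
\|\zeta_0^TH\zeta_0\|_{L^2}\le|\zeta_0|^2\,\|\,|H|\,\|_{L^2}=2\|H\|_{L^2},
\]
using the Frobenius pointwise norm.

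No step is a genuine obstacle. The only point requiring attention is that the $L^\infty$ norm here is the operator norm, while the $L^2$ norm is Frobenius. The operator norm suffices for the first estimate, and the Frobenius norm dominates the operator norm, which handles the second.
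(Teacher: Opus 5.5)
Your proposal is correct and follows essentially the same route as the paper. You bound the pairing of $Q_{H,\zeta_0,h}v$ with a test function term by term, using the operator-norm bound on $H$ and $|\zeta_0|=\sqrt2$, convert via \eqref{eq:H1-scl-equivalence}, and handle the source through $L^2\hookrightarrow\Hsm$ with $|\zeta_0^TH\zeta_0|\le2|H|$. The only nit is that the paper's Fourier convention gives $\|f\|_{\Hsm}\le(2\pi)^{n/2}\|f\|_{L^2}$ rather than constant $1$, which is harmless since $C$ may depend on $n$.
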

	
	\begin{proof}
		Using the distributional definition of the divergence term, for every $\psi\in\Hs$ we have
		\begin{equation}\label{eq:perturbation-duality}
			\begin{aligned}
				|\langle Q_{H,\zeta_0,h}v,\psi\rangle|\le h^2\|H\|_{L^\infty}\|\nabla v\|_{L^2}\|\nabla\psi\|_{L^2}
				+2h\|H\|_{L^\infty}|\zeta_0|\|\nabla v\|_{L^2}\|\psi\|_{L^2}+\|H\|_{L^\infty}|\zeta_0|^2\|v\|_{L^2}\|\psi\|_{L^2}.
			\end{aligned}
		\end{equation}
		By the definition of the semiclassical norms and Plancherel's theorem, the constants in \eqref{eq:H1-scl-equivalence} depend only on $n$. Since $|\zeta_0|=\sqrt2$ for $\zeta_0\in\mathcal Z$, the right-hand side of \eqref{eq:perturbation-duality} is bounded by $C\|H\|_{L^\infty}\allowbreak\|v\|_{\Hs}\allowbreak\|\psi\|_{\Hs}$, where $C$ depends only on $n$. Taking the supremum over $\psi$ with $\|\psi\|_{\Hs}=1$ proves \eqref{eq:perturbation-estimate}.
		
		For the source term, the Cauchy--Schwarz inequality gives $|\zeta_0^TH(x)\zeta_0|\le|\zeta_0|^2|H(x)|=2|H(x)|$. Also, by duality and \eqref{eq:H1-scl-equivalence}, $\|F\|_{\Hsm}\le C\|F\|_{L^2}$ for every $F\in L^2(\R^n)$, with $C$ depending only on $n$. Hence, $\|\zeta_0^TH\zeta_0\|_{\Hsm}\le 2C\|H\|_{L^2}$, which proves \eqref{eq:source-Hminus1}. 
	\end{proof}
	
	\subsection{Corrector estimates in a growing complex frequency range} Let $K_1$ be the compact set fixed in Lemma~\ref{lem:compact-corrector}. Thus, $K\Subset\operatorname{int}(K_1)\subset K_1\Subset\Omega$, and both $w_\zeta$ and $r_\zeta$ are supported in $K_1$ for every complex null vector $\zeta$. We regard $r_\zeta$ as its zero extension to $\R^n$. Put
	\begin{equation}\label{eq:X-delta}
		X=\|H\|_{L^2(\R^n)}, \quad \delta=\|H\|_{L^\infty}.
	\end{equation}
	All constants below may depend on $n$, $\Omega$, and the fixed set $K$, but are independent of $H$, $X$, $\delta$, and the complex frequency $\zeta$, unless stated otherwise. This includes the dependence on $K_1$, since $K_1$ was fixed in Lemma~\ref{lem:compact-corrector} depending only on $K$ and $\Omega$.
	
	\begin{proposition}[Corrector estimate]\label{prop:corrector-estimate}
		There are constants $c_0,C,\delta_0>0$, depending only on $n$, $\Omega$, and the fixed set $K$, with the following properties. If $\delta=0$, then $H=0$ almost everywhere, $A=I$, and $r_\zeta=0$ for every complex null vector $\zeta$. Suppose that $0<\delta\le\delta_0$. For every complex null vector $\zeta\in\C^n$ satisfying
		\begin{equation}\label{eq:corrector-frequency-range}
			|\zeta|\le c_0\delta^{-1},
		\end{equation}
		one has
		\begin{equation}\label{eq:corrector-bound-final}
			\|D_\zeta r_\zeta\|_{L^2(\R^n)}\le C|\zeta|(1+|\zeta|)X.
		\end{equation}
		More precisely,
		\begin{equation}\label{eq:corrector-small-frequency}
			\|D_\zeta r_\zeta\|_{L^2(\R^n)}\le C|\zeta|X \quad\text{if }|\zeta|\le1,
		\end{equation}
		and
		\begin{equation}\label{eq:corrector-large-frequency}
			\|D_\zeta r_\zeta\|_{L^2(\R^n)}\le C|\zeta|^2X \quad\text{if }1\le|\zeta|\le c_0\delta^{-1}.
		\end{equation}
	\end{proposition}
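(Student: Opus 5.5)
The plan is to treat two frequency regimes separately: an elementary energy estimate for bounded $|\zeta|$, and the Carleman estimate of Proposition~\ref{prop:compact-Carleman}, after semiclassical rescaling of the conjugated equation of Lemma~\ref{lem:conjugated-equation}, for large $|\zeta|$.

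\emph{The case $\delta=0$.} Here $H=0$ almost everywhere, so $A=I$. Then $e^{\zeta\cdot x}$ is itself the $A$-harmonic solution with trace $e^{\zeta\cdot x}$. Uniqueness in Lemma~\ref{lem:compact-corrector} gives $w_\zeta=0$, hence $r_\zeta=0$.

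\emph{Bounded frequencies.} Fix $\delta_0\le 1/2$, so that $A=I+H$ is uniformly elliptic with constant $1/2$. Since $e^{\zeta\cdot x}$ is harmonic, $w_\zeta\in H_0^1(\Omega)$ satisfies weakly
\[
-\diver(A\nabla w_\zeta)=\diver\bigl(H\nabla e^{\zeta\cdot x}\bigr).
\]
Testing with $\ol{w_\zeta}$ gives $\|\nabla w_\zeta\|_{L^2}\le C\|H\zeta e^{\zeta\cdot x}\|_{L^2}\le C|\zeta|e^{R|\zeta|}X$, where $R=\sup_{x\in\Omega}|x|$. By \eqref{eq:D-zeta-relation}, $D_\zeta r_\zeta=e^{-\zeta\cdot x}\nabla w_\zeta$ on $K_1$, so
\[
\|D_\zeta r_\zeta\|_{L^2}\le C|\zeta|e^{2R|\zeta|}X.
\]
For $|\zeta|\le T$, with $T$ any fixed constant, this is $\le C_T|\zeta|X$. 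This gives \eqref{eq:corrector-small-frequency}, and also \eqref{eq:corrector-large-frequency} on the intermediate range $1\le|\zeta|\le T$. The value of $T$ is fixed in the next step.

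\emph{Large frequencies.} Write $\zeta=\tau\zeta_0$ with $\zeta_0\in\mathcal Z$ and $\tau=|\zeta|/\sqrt2$, and set $h=1/\tau$. Take $T=\sqrt2/h_0$, so that $|\zeta|\ge T$ gives $h<h_0$. Apply Proposition~\ref{prop:compact-Carleman} with $\mathcal O$ a fixed bounded neighborhood of $K_1$.

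First, the conjugated equation must hold on all of $\R^n$ for the zero extension of $r_\zeta$. This holds because $r_\zeta$ and $H$ both vanish near $\p\Omega$ and outside $K_1$. Multiplying \eqref{eq:conjugated-equation} by $h^2$ and using $h^2\zeta^TH\zeta=\zeta_0^TH\zeta_0$ gives
\[
P_{\zeta_0,h}r_\zeta+Q_{H,\zeta_0,h}r_\zeta=\zeta_0^TH\zeta_0 .
\]
Combining the Carleman estimate with Lemma~\ref{lem:perturbation-estimate} then yields
\[
h\|r_\zeta\|_{\Hs}\le C\delta\|r_\zeta\|_{\Hs}+CX .
\]
The key step is to absorb the perturbation. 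This requires $C\delta\le h/2$, which is exactly $|\zeta|\le c_0\delta^{-1}$ for a suitable $c_0$; this is the origin of the restricted frequency range in \eqref{eq:corrector-frequency-range}. Absorbing gives
\[
\|r_\zeta\|_{\Hs}\le CX/h=C'|\zeta|X .
\]
Finally,
\[
\|D_\zeta r_\zeta\|_{L^2}\le h^{-1}\|h\nabla r_\zeta\|_{L^2}+|\zeta|\|r_\zeta\|_{L^2}\le C|\zeta|\,\|r_\zeta\|_{\Hs}\le C|\zeta|^2X .
\]
This proves \eqref{eq:corrector-large-frequency} for $|\zeta|\ge T$. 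Together with the bounded-frequency estimate, \eqref{eq:corrector-bound-final} follows.

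The main obstacle is the absorption step, together with checking that the constants are uniform. The constants in Proposition~\ref{prop:compact-Carleman} and Lemma~\ref{lem:perturbation-estimate} are independent of $\zeta_0$ and $h$, so $c_0$ and $\delta_0$ depend only on $n$, $\Omega$ and $K$. If $c_0\delta^{-1}<T$, the large-frequency range is empty and the bounded-frequency estimate alone suffices.
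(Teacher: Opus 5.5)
Your proposal is correct and follows the paper's proof essentially step by step. For bounded $|\zeta|$ you use the energy estimate with exponential weights, and for large $|\zeta|$ you use the rescaling $h=\sqrt2/|\zeta|$, the Carleman estimate applied to the compactly supported zero extension of $r_\zeta$, and absorption of $Q_{H,\zeta_0,h}$ under $C\delta\le h/2$. The only differences are cosmetic. The paper takes $R_0>\max\{1,\sqrt2/h_0\}$ strictly, to get $h<h_0$ and $R_0\ge1$, and it imposes $\delta_0\le c_0/R_0$. You instead cover the endpoint $|\zeta|=T$ and a possibly empty large-frequency range with the bounded-frequency estimate, which is equally valid.
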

	
	\begin{proof}
		If $\delta=0$, then $H=0$ almost everywhere and $A=I$. The functions $u_\zeta$ and $e^{\zeta\cdot x}$ solve the same Dirichlet problem with $u_\zeta|_{\p\Omega}=e^{\zeta\cdot x}|_{\p\Omega}$. Uniqueness gives $u_\zeta=e^{\zeta\cdot x}$ in $\overline{\Omega}$, so $w_\zeta=0$ and $r_\zeta=0$.
		
		Assume from now on that $0<\delta\le\delta_0$. Fix a bounded open set $\mathcal O\subset\R^n$ such that $K_1\Subset\mathcal O$, and let $h_0$ be the constant in Proposition~\ref{prop:compact-Carleman} for this fixed set $\mathcal O$. Choose once and for all $R_0>\max\big\{1,\frac{\sqrt2}{h_0}\big\}$. We first prove the estimate for $|\zeta|\le R_0$. Recall that $w_\zeta=u_\zeta-e^{\zeta\cdot x}\in H_0^1(\Omega)$ and
		\begin{equation}\label{eq:w-equation}
			-\diver(A\nabla w_\zeta)=\diver\big(e^{\zeta\cdot x}H\zeta\big) \quad\text{in }\Omega.
		\end{equation}
		We shall choose $\delta_0\le1/2$. Since $H$ is real symmetric and $\|H\|_{L^\infty}=\delta$, every eigenvalue of $H(x)$ belongs to $[-\delta,\delta]$ for almost every $x$. Every eigenvalue of $A(x)=I+H(x)$ is at least $1-\delta\ge1/2$. In particular, $A(x)z\cdot\overline z\ge\frac12|z|^2$ for every $z\in\C^n$ and almost every $x\in\Omega$.
		
		Testing \eqref{eq:w-equation} with $\overline{w_\zeta}$ and taking real parts gives
		\begin{equation}\label{eq:w-energy}
			\begin{aligned}
				\frac12\|\nabla w_\zeta\|_{L^2(\Omega)}^2\le\operatorname{Re}\int_\Omega A\nabla w_\zeta\cdot\nabla\overline{w_\zeta}\,dx =-\operatorname{Re}\int_\Omega e^{\zeta\cdot x}H\zeta\cdot\nabla\overline{w_\zeta}\,dx\le|\zeta|\|e^{\zeta\cdot x}H\|_{L^2(K)}\|\nabla w_\zeta\|_{L^2(\Omega)}.
			\end{aligned}
		\end{equation}
		If $\|\nabla w_\zeta\|_{L^2(\Omega)}=0$, then $w_\zeta=0$ because $w_\zeta\in H_0^1(\Omega)$, and \eqref{eq:D-zeta-relation} gives $D_\zeta r_\zeta=0$. Otherwise, division by $\|\nabla w_\zeta\|_{L^2(\Omega)}$ gives
		\begin{equation}\label{eq:w-energy-final}
			\|\nabla w_\zeta\|_{L^2(\Omega)}\le C|\zeta|\|e^{\zeta\cdot x}H\|_{L^2(K)}.
		\end{equation}
		
		Since the fixed set $K_1$ is compact, choose $\rho>0$ such that $K_1\subset\{x\in\R^n:\, |x|\le\rho\}$. The inclusion $K\subset K_1$ and the inequality $|e^{\pm\zeta\cdot x}|\le e^{|\operatorname{Re}\zeta||x|}$ give $|e^{\pm\zeta\cdot x}|\le e^{\rho|\operatorname{Re}\zeta|}$ on $K_1$. In particular, $\|e^{\zeta\cdot x}H\|_{L^2(K)}\le e^{\rho|\operatorname{Re}\zeta|}X$. Lemma~\ref{lem:compact-corrector} gives $\supp w_\zeta\subset K_1$, while \eqref{eq:D-zeta-relation} gives $D_\zeta r_\zeta=e^{-\zeta\cdot x}\nabla w_\zeta$. Using \eqref{eq:w-energy-final}, we obtain
		\begin{equation}\label{eq:bounded-frequency-corrector}
			\begin{aligned}
				\|D_\zeta r_\zeta\|_{L^2(\R^n)}=\|e^{-\zeta\cdot x}\nabla w_\zeta\|_{L^2(K_1)}\le e^{\rho|\operatorname{Re}\zeta|}\|\nabla w_\zeta\|_{L^2(\Omega)}\le C|\zeta|e^{2\rho|\operatorname{Re}\zeta|}X.
			\end{aligned}
		\end{equation}
		For $|\zeta|\le R_0$, the inequality $|\operatorname{Re}\zeta|\le|\zeta|$ gives
		\begin{equation}\label{eq:fixed-frequency-corrector}
			\|D_\zeta r_\zeta\|_{L^2(\R^n)}\le C_{R_0}|\zeta|X,
		\end{equation}
		where $C_{R_0}=Ce^{2\rho R_0}$. This proves \eqref{eq:corrector-small-frequency}. If $1\le|\zeta|\le R_0$, then $|\zeta|\le|\zeta|^2$, and \eqref{eq:fixed-frequency-corrector} also gives
		\begin{equation}\label{eq:bounded-transition-corrector}
			\|D_\zeta r_\zeta\|_{L^2(\R^n)}\le C_{R_0}|\zeta|^2X.
		\end{equation}
		
		We next consider $|\zeta|\ge R_0$. Write $\zeta=\alpha+\mathsf{i}\beta$ with $\alpha,\beta\in\R^n$. Since $\zeta\cdot\zeta=0$, one has $\alpha\cdot\beta=0$ and $|\alpha|=|\beta|=:s$. Set $h=s^{-1}$ and $\zeta_0=h\zeta=\frac{\alpha}{s}+\mathsf{i}\frac{\beta}{s}$, then $\zeta_0\in\mathcal Z$, $|\zeta_0|=\sqrt2$, and
		\begin{equation}\label{eq:h-zeta-relation}
			h=\frac{\sqrt2}{|\zeta|}.
		\end{equation}
		The choice of $R_0$ gives $0<h<h_0$.
		
		Since $r_\zeta=0$ and $H=0$ in a neighborhood of $\p\Omega$, extending $r_\zeta$ by zero and $A$ by $I$ introduces no distribution supported on $\p\Omega$. Multiplying \eqref{eq:conjugated-equation} by $h^2$ and using $\zeta=h^{-1}\zeta_0$ gives
		\begin{equation}\label{eq:semiclassical-corrector-equation}
			P_{\zeta_0,h}r_\zeta+Q_{H,\zeta_0,h}r_\zeta=\zeta_0^TH\zeta_0
		\end{equation}
		in distributions on $\R^n$. The zero extension of $r_\zeta$ belongs to $H^1(\R^n)$ and is supported in the fixed set $K_1\Subset\mathcal O$. Proposition~\ref{prop:compact-Carleman} applies to this extension. Using \eqref{eq:semiclassical-corrector-equation} and Lemma~\ref{lem:perturbation-estimate}, we obtain
		\begin{equation}\label{eq:corrector-absorption}
			\begin{aligned}
				h\|r_\zeta\|_{\Hs}\le C\|P_{\zeta_0,h}r_\zeta\|_{\Hsm}\le C\|\zeta_0^TH\zeta_0\|_{\Hsm}
				+C\|Q_{H,\zeta_0,h}r_\zeta\|_{\Hsm}\le C_1X+C_2\delta\|r_\zeta\|_{\Hs},
			\end{aligned}
		\end{equation}
		where the constants $C_1$ and $C_2$ depend only on $n$, the fixed set $\mathcal O$, and the fixed support sets $K$ and $K_1$. In particular, they are independent of $H$, $X$, $\delta$, $\zeta$, and $h$. Since $\mathcal O$ and $K_1$ have already been fixed in terms of $K$ and $\Omega$, the constants $C_1$ and $C_2$ are fixed before $c_0$ and $\delta_0$ are chosen.
		
		Choose $c_0>0$ so that $C_2c_0\le1/\sqrt2$. If $|\zeta|\le c_0\delta^{-1}$, then $\delta\le c_0|\zeta|^{-1}$. Formula~\eqref{eq:h-zeta-relation} gives 
		\begin{equation}\label{eq:estimate-constant-C2}
			C_2\delta\le\frac{C_2c_0}{|\zeta|}\le\frac{1}{\sqrt2|\zeta|}=\frac h2.
		\end{equation}
		By \eqref{eq:estimate-constant-C2}, the last term in \eqref{eq:corrector-absorption} is bounded by $\frac h2\|r_\zeta\|_{\Hs}$. Moving this term to the left and dividing by $h/2$ gives
		\begin{equation}\label{eq:r-H1-bound}
			\|r_\zeta\|_{\Hs}\le Ch^{-1}X.
		\end{equation}
		Since $\zeta=h^{-1}\zeta_0$, one has $D_\zeta r_\zeta=h^{-1}(h\nabla+\zeta_0)r_\zeta$. Formula~\eqref{eq:H1-scl-equivalence}, the identity $|\zeta_0|=\sqrt2$, and \eqref{eq:r-H1-bound} give
		\begin{equation}\label{eq:Dzeta-from-H1}
			\begin{aligned}
				\|D_\zeta r_\zeta\|_{L^2(\R^n)}\le h^{-1}\big(\|h\nabla r_\zeta\|_{L^2}+|\zeta_0|\|r_\zeta\|_{L^2}\big)	\le Ch^{-1}\|r_\zeta\|_{\Hs}\le Ch^{-2}X\le C|\zeta|^2X.
			\end{aligned}
		\end{equation}
		This proves \eqref{eq:corrector-large-frequency} for $R_0\le|\zeta|\le c_0\delta^{-1}$. Formula~\eqref{eq:bounded-transition-corrector} gives the same estimate for $1\le|\zeta|\le R_0$.
		
		Choose $0<\delta_0\le\min\big\{\frac12,\frac{c_0}{R_0}\big\}$. For every $0<\delta\le\delta_0$, one has $c_0\delta^{-1}\ge R_0$, so the bounded frequency estimate and the Carleman estimate cover the entire interval $1\le|\zeta|\le c_0\delta^{-1}$. After increasing $C$ to include the fixed constant $C_{R_0}$, this proves \eqref{eq:corrector-small-frequency} and \eqref{eq:corrector-large-frequency} with one constant $C$.
		
		For $|\zeta|\le1$, the right-hand side in \eqref{eq:corrector-small-frequency} is bounded by the right-hand side in \eqref{eq:corrector-bound-final}. For $1\le|\zeta|\le c_0\delta^{-1}$, one has $|\zeta|^2\le|\zeta|(1+|\zeta|)$. This proves \eqref{eq:corrector-bound-final}.
	\end{proof}
	
	\begin{remark}\label{rem:corrector-frequency-range}
		The upper bound in \eqref{eq:corrector-frequency-range} is the condition needed to absorb the perturbation term in \eqref{eq:corrector-absorption}. The relation $h=\sqrt2/|\zeta|$ turns $C_2\delta\le h/2$ into a bound of the form $|\zeta|\le c_0\delta^{-1}$. In the Fourier argument below, the complex frequency associated with a real frequency $\xi$ has size $|\zeta|=|\xi|/\sqrt2$. For $X>0$, the frequency decomposition uses $R=\kappa X^{-1/s_*}$, where $\kappa>0$ is fixed and sufficiently small. If $M=\|H\|_{H^{s_*}}$, Lemma~\ref{lem:GN} gives $\delta\le CX^{1/s_*}M^{1-1/s_*}$, and $R\delta\le C\kappa M^{1-1/s_*}$. Smallness of $M$ places this radius in the real-frequency range of Proposition~\ref{prop:Fourier-estimate}.
	\end{remark}

	\section{Fourier estimates and Sobolev rigidity}\label{sec:fourier}\label{sec:conductivity-proof}
	
	We now use the corrector estimate in Proposition~\ref{prop:corrector-estimate} to control the Fourier transform of $H$ and then combine this estimate with the Sobolev regularity of $H$. The quantity $\delta=\|H\|_{L^\infty}$ defined in \eqref{eq:X-delta} continues to use the pointwise matrix operator norm, as specified earlier. In particular, $|Hv|\le\delta|v|$ for every $v\in\C^n$ and almost every $x\in\R^n$.
	
	\subsection{Fourier estimates in a growing frequency range}
	
	We use the Fourier transform convention $\wh f(\xi)=\int_{\R^n}e^{-\mathsf{i}x\cdot\xi}f(x)\,dx$, and apply it componentwise to vector and matrix fields. Since $H$ is compactly supported and belongs to $L^2(\R^n)$, it also belongs to $L^1(\R^n)$, so $\wh H$ is continuous.
	
	For each $1\le j\le n$, the identity $\p_iH^{ij}=0$ gives $\mathsf{i}\sum_{i=1}^n\xi_i\wh H^{ij}(\xi)=0$. In matrix notation, this says $\xi^T\wh H(\xi)=0$. The tensor $H$ is symmetric, so $\wh H(\xi)^T=\wh H(\xi)$ and $\wh H(\xi)\xi=0$ for every $\xi\in\R^n$.
	
	\begin{lemma}[Null decomposition of a real frequency]\label{lem:null-decomposition}
		Let $0\ne\xi\in\R^n$, set $\omega=\xi/|\xi|$, and let $\theta\in\mathbb S^{n-1}$ satisfy $\theta\cdot\xi=0$. Define $\zeta=\frac{|\xi|}{2}(\theta+\mathsf{i}\omega)$ and $\eta=\frac{|\xi|}{2}(-\theta+\mathsf{i}\omega)$. Then $\zeta$ and $\eta$ are complex null vectors, $\zeta+\eta=\mathsf{i}\xi$, and $|\zeta|=|\eta|=|\xi|/\sqrt2$. If $M\in\C^{n\times n}$ is symmetric and $M\xi=0$, then $\zeta^TM\eta=-\frac{|\xi|^2}{4}\theta^TM\theta$.
	\end{lemma}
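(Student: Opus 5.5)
The lemma is a direct algebraic verification, and I will check its assertions in order using only the complex bilinear dot product fixed in the introduction. The only inputs are the orthonormality relations $\theta\cdot\theta=\omega\cdot\omega=1$ and $\theta\cdot\omega=0$. The last one holds because $\theta\cdot\xi=0$ and $\omega=\xi/|\xi|$.

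First, for the null property, expand bilinearly:
\[
(\theta+\mathsf{i}\omega)\cdot(\theta+\mathsf{i}\omega)=\theta\cdot\theta-\omega\cdot\omega+2\mathsf{i}\,\theta\cdot\omega=1-1+0=0 .
\]
The same computation applies to $-\theta+\mathsf{i}\omega$, so $\zeta\cdot\zeta=\eta\cdot\eta=0$ after multiplying by $|\xi|^2/4$. Equivalently, one can use the characterization recalled in the introduction: the real part $\pm\frac{|\xi|}{2}\theta$ and the imaginary part $\frac{|\xi|}{2}\omega$ are orthogonal and have equal length.

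Second, the sum is $\zeta+\eta=\frac{|\xi|}{2}\cdot 2\mathsf{i}\omega=\mathsf{i}|\xi|\omega=\mathsf{i}\xi$. For the Hermitian norm, $|\zeta|^2=\frac{|\xi|^2}{4}(|\theta|^2+|\omega|^2)=\frac{|\xi|^2}{2}$, and likewise for $\eta$. This gives $|\zeta|=|\eta|=|\xi|/\sqrt2$.

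Third, for the bilinear form, $M\xi=0$ gives $M\omega=0$. Symmetry of $M$ then gives $\omega^TM=(M\omega)^T=0$. Expanding,
\[
\zeta^TM\eta=\frac{|\xi|^2}{4}(\theta+\mathsf{i}\omega)^TM(-\theta+\mathsf{i}\omega),
\]
every term containing $\omega$ on either side vanishes, leaving $\frac{|\xi|^2}{4}\,\theta^TM(-\theta)=-\frac{|\xi|^2}{4}\theta^TM\theta$.

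No step is a genuine obstacle. The one point needing care is that the dot product is bilinear rather than sesquilinear, so no conjugates appear in the expansions. The symmetry of $M$ is used only to kill the $\omega^TM\theta$ term.
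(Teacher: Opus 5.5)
Your proposal is correct and follows the paper's own argument: the null property, the sum, and the norms come from $\theta\cdot\theta=\omega\cdot\omega=1$ and $\theta\cdot\omega=0$. The identity for $\zeta^TM\eta$ follows because $M\omega=0$ and, by symmetry, $\omega^TM=0$ kill every term except $-\frac{|\xi|^2}{4}\theta^TM\theta$.
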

	
	\begin{proof}
		The orthogonality $\theta\cdot\omega=0$ and the identities $|\theta|=|\omega|=1$ give $\zeta\cdot\zeta=\eta\cdot\eta=0$, while the definitions give $\zeta+\eta=\mathsf{i}|\xi|\omega=\mathsf{i}\xi$. The Hermitian norms satisfy $|\zeta|^2=|\eta|^2=|\xi|^2/2$. Since $M\xi=0$, one has $M\omega=0$. Symmetry also gives $\omega^TM=0$. Expanding $\zeta^TM\eta$ leaves only the term containing $\theta^TM\theta$, and gives $\zeta^TM\eta=-\frac{|\xi|^2}{4}\theta^TM\theta$.
	\end{proof}
	
	\begin{lemma}[Polarization on the transverse space]\label{lem:matrix-polarization}
		For every $n\ge2$, there is $C_n>0$ such that, if $M\in\C^{n\times n}$ is symmetric and $M\xi=0$ for some $0\ne\xi\in\R^n$, then
		\begin{equation}\label{eq:matrix-polarization}
			|M|\le C_n\sup\left\{|\theta^TM\theta|:\,  \theta\in\R^n,\ \theta\perp\xi,\ |\theta|=1\right\}.
		\end{equation}
	\end{lemma}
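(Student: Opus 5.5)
The plan is to reduce to an orthonormal basis adapted to $\xi$ and use polarization for complex symmetric bilinear forms restricted to real vectors. Set $S:=\sup\{|\theta^TM\theta|:\theta\perp\xi,\ |\theta|=1\}$. Choose an orthogonal matrix $R\in O(n)$ whose last column is $\omega=\xi/|\xi|$, and whose first $n-1$ columns $e'_1,\dots,e'_{n-1}$ span $\xi^\perp$. Since $|M|$ is the Frobenius norm and $R$ is real orthogonal, $|M|=|R^TMR|$. The entries of $R^TMR$ are $(e'_a)^TMe'_b$, with the convention $e'_n=\omega$.

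First, the entries with an index equal to $n$ vanish. Indeed, $M\omega=0$ by hypothesis, and $\omega^TM=(M\omega)^T=0$ by symmetry. So it suffices to bound $|(e'_a)^TMe'_b|$ for $1\le a,b\le n-1$.

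For the diagonal entries, $|(e'_a)^TMe'_a|\le S$ directly, since $e'_a$ is a unit vector orthogonal to $\xi$. For the off-diagonal entries with $a\ne b$, use the polarization identity, valid because $M$ is complex symmetric and the bilinear form has no conjugation:
\[
(e'_a)^TMe'_b=\tfrac14\bigl[(e'_a+e'_b)^TM(e'_a+e'_b)-(e'_a-e'_b)^TM(e'_a-e'_b)\bigr].
\]
The vectors $(e'_a\pm e'_b)/\sqrt2$ are real unit vectors orthogonal to $\xi$. Hence each quadratic term is bounded by $2S$, and $|(e'_a)^TMe'_b|\le S$.

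Summing over all $(n-1)^2$ entries gives $|M|\le (n-1)S$. So one may take $C_n=n-1$; any positive constant works in the degenerate case.

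There is no real obstacle here. The only point to watch is that the polarization uses the bilinear, not Hermitian, form. This is exactly why it is stated for real $\theta$ and complex symmetric $M$, and why symmetry of $M$ is needed both for polarization and for $\omega^TM=0$.
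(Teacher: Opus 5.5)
Your proof is correct and follows essentially the same route as the paper: it uses an orthonormal basis with last vector $\omega=\xi/|\xi|$, kills the last row and column via $M\omega=0$ and symmetry, and polarizes with real unit vectors in $\xi^\perp$. The only difference is cosmetic. You use the four-term identity with $e'_a\pm e'_b$, which gives $|(e'_a)^TMe'_b|\le S$, whereas the paper uses $2M_{jk}=2\theta_{jk}^TM\theta_{jk}-M_{jj}-M_{kk}$ and obtains the slightly weaker bound $2Q$.
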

	
	\begin{proof}
		Set $\omega=\xi/|\xi|$ and choose an orthonormal basis $e_1,\ldots,e_{n-1},e_n$ of $\R^n$ with $e_n=\omega$. Since $M\omega=0$, the last column of $M$ in this basis vanishes. Symmetry shows that the last row also vanishes. It is enough to estimate the entries $M_{jk}=e_j^TMe_k$ for $1\le j,k\le n-1$.
		
		Let $Q=\sup\{|\theta^TM\theta|:\, \theta\in \R^n, \ \theta\perp\xi,\ |\theta|=1\}$. For each $1\le j\le n-1$, one has $|M_{jj}|=|e_j^TMe_j|\le Q$. If $j\ne k$, set $\theta_{jk}=(e_j+e_k)/\sqrt2$. Symmetry gives $2M_{jk}=2\theta_{jk}^TM\theta_{jk}-e_j^TMe_j-e_k^TMe_k$, so $|M_{jk}|\le2Q$. All remaining entries vanish. It follows that $|M|^2=\sum_{j,k=1}^{n-1}|M_{jk}|^2\le C_n^2Q^2$, which proves \eqref{eq:matrix-polarization}.
	\end{proof}
	
	\begin{proposition}[Fourier estimate in a growing frequency range]\label{prop:Fourier-estimate}
		Let $c_0$ and $\delta_0$ be the constants in Proposition~\ref{prop:corrector-estimate}. There are constants $c_1,C>0$, depending only on $n$, $\Omega$, and the fixed set $K$, with the following property. If $0<\delta\le\delta_0$, then
		\begin{equation}\label{eq:Fourier-estimate}
			|\wh H(\xi)|\le C(1+|\xi|)X^2\quad\text{for every }|\xi|\le c_1\delta^{-1}.
		\end{equation}
		If $\delta=0$, then $H=0$ almost everywhere and $\wh H=0$.
	\end{proposition}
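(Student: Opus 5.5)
Fix $0\ne\xi$ with $|\xi|\le c_1\delta^{-1}$, where $c_1=\sqrt2c_0$ is the choice we aim for. Take any unit $\theta\perp\xi$ and let $\zeta,\eta$ be the null vectors of Lemma~\ref{lem:null-decomposition}, so $\zeta+\eta=\mathsf{i}\xi$ and $|\zeta|=|\eta|=|\xi|/\sqrt2\le c_0\delta^{-1}$. Proposition~\ref{prop:corrector-estimate} therefore applies to $r_\zeta$. The case $\delta=0$ is immediate, since then $H=0$ a.e.

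\textbf{The pairing identity.} Apply Lemma~\ref{lem:mixed-identity} with $u=u_\zeta=e^{\zeta\cdot x}(1+r_\zeta)$ and the harmonic function $v=e^{\eta\cdot x}$. We have $\nabla u_\zeta=e^{\zeta\cdot x}(\zeta+D_\zeta r_\zeta)$ and $\nabla v=\eta e^{\eta\cdot x}$, and $e^{(\zeta+\eta)\cdot x}=e^{\mathsf{i}x\cdot\xi}$. Hence
\begin{equation*}
0=\int e^{\mathsf{i}x\cdot\xi}\,\zeta^TH(x)\eta\,dx+\int e^{\mathsf{i}x\cdot\xi}\,H D_\zeta r_\zeta\cdot\eta\,dx .
\end{equation*}
The first integral equals $\zeta^T\wh H(-\xi)\eta$. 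We have $\wh H(-\xi)\xi=0$, and $\wh H(-\xi)$ is symmetric. Lemma~\ref{lem:null-decomposition} then turns the first integral into $-\frac{|\xi|^2}{4}\theta^T\wh H(-\xi)\theta$. For the second integral, $|e^{\mathsf{i}x\cdot\xi}|=1$, so Cauchy--Schwarz and the pointwise bound $|H v|\le|H||v|$ bound it by $|\eta|\,X\,\|D_\zeta r_\zeta\|_{L^2}$.

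\textbf{Combining with the corrector estimate.} Estimate \eqref{eq:corrector-bound-final} gives
\begin{equation*}
\frac{|\xi|^2}{4}|\theta^T\wh H(-\xi)\theta|\le C|\xi|\cdot|\xi|(1+|\xi|)X^2 .
\end{equation*}
Dividing by $|\xi|^2$ yields $|\theta^T\wh H(-\xi)\theta|\le C(1+|\xi|)X^2$ uniformly in $\theta\perp\xi$. Lemma~\ref{lem:matrix-polarization} then gives $|\wh H(-\xi)|\le C(1+|\xi|)X^2$. Replacing $\xi$ by $-\xi$, which preserves the range, gives the same bound for $\wh H(\xi)$ when $\xi\neq0$.

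\textbf{The point $\xi=0$.} Here the division by $|\xi|^2$ is unavailable, and this is the delicate step. The bound holds for all $0<|\xi|\le c_1\delta^{-1}$, and this range has positive radius. Since $\wh H$ is continuous, letting $\xi\to0$ extends the estimate to $\xi=0$ with the same constant.

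The main thing to check is the consistency of the frequency ranges, i.e.\ that $|\zeta|=|\xi|/\sqrt2\le c_0\delta^{-1}$. This holds by the choice $c_1=\sqrt2c_0$. The factor $1+|\xi|$ is exactly what remains after cancelling $|\xi|^2$ against the $|\zeta||\eta|$ growth.
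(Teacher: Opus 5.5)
Your proposal is correct and follows the paper's proof essentially step for step. It uses the same mixed identity with $u_\zeta$ and $e^{\eta\cdot x}$, the null decomposition, the Cauchy--Schwarz bound $|\eta|X\|D_\zeta r_\zeta\|_{L^2}$, polarization on $\xi^\perp$, the reflection $\xi\mapsto-\xi$, and continuity of $\wh H$ at $\xi=0$. The one cosmetic difference is that you apply the combined corrector bound $\|D_\zeta r_\zeta\|_{L^2}\le C|\zeta|(1+|\zeta|)X$ directly, whereas the paper first splits into $|\xi|\le2$ and $|\xi|\ge2$ and applies the small- and large-frequency corrector estimates separately; both give the same bound.
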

	
	\begin{proof}
		Assume that $0<\delta\le\delta_0$, and choose $c_1>0$ so that $c_1\le\sqrt2c_0$. Fix $0\ne\xi\in\R^n$ with $|\xi|\le c_1\delta^{-1}$, and let $\theta\in\mathbb S^{n-1}$ satisfy $\theta\cdot\xi=0$. Define $\zeta$ and $\eta$ as in Lemma~\ref{lem:null-decomposition}. Then $|\zeta|=|\xi|/\sqrt2\le c_0\delta^{-1}$, so Proposition~\ref{prop:corrector-estimate} applies to $r_\zeta$.
		
		Using Lemma~\ref{lem:compact-corrector}, the function $u_\zeta=e^{\zeta\cdot x}(1+r_\zeta)$ is $A$-harmonic, while $e^{\eta\cdot x}$ is harmonic. Lemma~\ref{lem:mixed-identity} gives $0=\int_\Omega H\nabla u_\zeta\cdot\nabla e^{\eta\cdot x}\,dx$. Since $\nabla u_\zeta=e^{\zeta\cdot x}(\zeta+D_\zeta r_\zeta)$ and $\nabla e^{\eta\cdot x}=e^{\eta\cdot x}\eta$, one has $H\nabla u_\zeta\cdot\nabla e^{\eta\cdot x}=e^{(\zeta+\eta)\cdot x}\big((H\zeta)\cdot\eta+(HD_\zeta r_\zeta)\cdot\eta\big)$. Since $\supp H\subset K$, the mixed identity gives
		\begin{equation}\label{eq:mixed-id-H}
			\int_K e^{(\zeta+\eta)\cdot x}(H\zeta)\cdot\eta\,dx=-\int_K e^{(\zeta+\eta)\cdot x}(HD_\zeta r_\zeta)\cdot\eta\,dx.
		\end{equation}
		Using $H^T=H$, $\zeta+\eta=\mathsf{i}\xi$, the Fourier transform convention, and \eqref{eq:mixed-id-H}, we obtain
		\begin{equation}\label{eq:Fourier-identity-expanded}
			\begin{aligned}
				\zeta^T\wh H(-\xi)\eta
				&=\zeta^T\bigg(\int_{\R^n}e^{\mathsf{i}\xi\cdot x}H(x)\,dx\bigg)\eta=\int_K e^{\mathsf{i}\xi\cdot x}\zeta^TH(x)\eta\,dx=\int_K e^{(\zeta+\eta)\cdot x}(H(x)\zeta)\cdot\eta\,dx\\
				&=-\int_K e^{(\zeta+\eta)\cdot x}(H(x)D_\zeta r_\zeta)\cdot\eta\,dx=-\int_K e^{\mathsf{i}\xi\cdot x}(H(x)D_\zeta r_\zeta)\cdot\eta\,dx.
			\end{aligned}
		\end{equation}	
		
		Since $\wh H(-\xi)\xi=0$ and $\wh H(-\xi)$ is symmetric, Lemma~\ref{lem:null-decomposition} gives $\zeta^T\wh H(-\xi)\eta=-\frac{|\xi|^2}{4}\theta^T\wh H(-\xi)\theta$. The Cauchy--Schwarz inequality gives
		\begin{equation}\label{eq:Fourier-remainder-L2}
			\left|\int_K e^{\mathsf{i}\xi\cdot x}(H D_\zeta r_\zeta)\cdot\eta\,dx\right|
			\le
			|\eta|\|H\|_{L^2(K)}\|D_\zeta r_\zeta\|_{L^2(K)}
			\le
			|\eta|X\|D_\zeta r_\zeta\|_{L^2(\R^n)}.
		\end{equation}
		Here the pointwise matrix norm in the $L^2$ norm is the Frobenius norm fixed above, and $|Hz|\le |H||z|$ for every $z\in\C^n$.
		
		Since $|\eta|=|\xi|/\sqrt2$, if $|\xi|\le2$, then \eqref{eq:corrector-bound-final} and $|\zeta|=|\xi|/\sqrt2$ give $\|D_\zeta r_\zeta\|_{L^2(\R^n)}\le C|\zeta|(1+|\zeta|)X\le C|\xi|X$. The right-hand side of \eqref{eq:Fourier-remainder-L2} is bounded by $C|\xi|^2X^2$. If $2\le|\xi|\le c_1\delta^{-1}$, then \eqref{eq:corrector-large-frequency} gives $\|D_\zeta r_\zeta\|_{L^2(\R^n)}\le C|\xi|^2X$, and the right-hand side of \eqref{eq:Fourier-remainder-L2} is bounded by $C|\xi|^3X^2$. Using \eqref{eq:Fourier-identity-expanded} and dividing by $|\xi|^2$ gives $|\theta^T\wh H(-\xi)\theta| \le C(1+|\xi|)X^2$. The vector $\theta$ was arbitrary among the unit vectors in $\xi^\perp$. Lemma~\ref{lem:matrix-polarization}, applied to $M=\wh H(-\xi)$, gives $|\wh H(-\xi)|\le C(1+|\xi|)X^2$. Replacing $\xi$ by $-\xi$ proves \eqref{eq:Fourier-estimate} for every nonzero frequency in the stated range. Continuity of $\wh H$ gives the estimate at $\xi=0$.
	\end{proof}
	
	\subsection{Frequency splitting and proof of the conductivity theorem}
	
	\para{Low and high Fourier modes}
	
	\begin{lemma}[Low-frequency estimate]\label{lem:low-frequency}
		Under the hypotheses of Proposition~\ref{prop:Fourier-estimate}, suppose that $0<\delta\le\delta_0$. Then
		\begin{equation}\label{eq:low-frequency-estimate}
			\|\wh H\|_{L^2(|\xi|\le R)} \le CR^{s_*}X^2
		\end{equation}
		for every $1\le R\le c_1\delta^{-1}$.
	\end{lemma}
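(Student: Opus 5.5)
The plan is to integrate the squared pointwise bound of Proposition~\ref{prop:Fourier-estimate} over the ball $\{|\xi|\le R\}$. Since $1\le R\le c_1\delta^{-1}$, every $\xi$ with $|\xi|\le R$ lies in the range where \eqref{eq:Fourier-estimate} holds, so $|\wh H(\xi)|\le C(1+|\xi|)X^2\le C(1+R)X^2\le 2CRX^2$, using $R\ge1$. The function $\wh H$ is continuous, so the $L^2$ norm over the ball is well defined.

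Squaring and integrating gives
\[
\|\wh H\|_{L^2(|\xi|\le R)}^2\le 4C^2R^2X^4\,|B_R|=4C^2\omega_nR^{n+2}X^4,
\]
where $\omega_n$ denotes the volume of the unit ball in $\R^n$. Taking square roots yields $\|\wh H\|_{L^2(|\xi|\le R)}\le C'R^{(n+2)/2}X^2$. Since $s_*=\frac n2+1=\frac{n+2}{2}$, this is exactly \eqref{eq:low-frequency-estimate}, and the constant depends only on $n$, $\Omega$, and $K$ through the constant in Proposition~\ref{prop:Fourier-estimate}.

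I do not expect any real obstacle. The only points to check are that the whole ball lies inside the admissible frequency range, which is the hypothesis $R\le c_1\delta^{-1}$, and that $1+R\le 2R$ requires $R\ge1$. The matrix norm $|\wh H(\xi)|$ is the Frobenius norm, matching the convention used for the $L^2$ norm of matrix fields, so the pointwise bound squares directly into the integrand.
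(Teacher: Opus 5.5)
Your proposal is correct and follows essentially the same route as the paper: both integrate the squared pointwise bound of Proposition~\ref{prop:Fourier-estimate} over $\{|\xi|\le R\}$ and use $R\ge1$ to get $R^{n+2}$. The only cosmetic difference is that the paper integrates $(1+r)^2r^{n-1}$ in polar coordinates, while you bound $1+|\xi|\le 2R$ and multiply by the volume of the ball.
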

	
	\begin{proof}
		Applying Proposition~\ref{prop:Fourier-estimate} and using polar coordinates, we obtain
		\begin{equation*}
			\int_{|\xi|\le R}|\wh H(\xi)|^2\,d\xi\le CX^4\int_0^R(1+r)^2r^{n-1}\,dr
			\le CX^4R^{n+2}.
		\end{equation*}
		The last inequality uses $R\ge1$. Taking square roots and using $s_*=(n+2)/2$ proves \eqref{eq:low-frequency-estimate}.
	\end{proof}
	
	\begin{lemma}[Sobolev tail]\label{lem:Sobolev-tail}
		Let $m>0$ and set $M_m=\|H\|_{H^m(\R^n)}$. For every $R\ge1$,
		\begin{equation}\label{eq:Sobolev-tail}
			\|\wh H\|_{L^2(|\xi|>R)}\le CR^{-m}M_m.
		\end{equation}
	\end{lemma}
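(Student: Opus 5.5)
The estimate follows from Plancherel's theorem and the definition of the Sobolev norm through the Fourier multiplier $\langle\xi\rangle^m=(1+|\xi|^2)^{1/2\cdot m}$. We use the convention $\|H\|_{H^m(\R^n)}^2=(2\pi)^{-n}\int_{\R^n}\langle\xi\rangle^{2m}|\wh H(\xi)|^2\,d\xi$, compatible with the transform $\wh f(\xi)=\int e^{-\mathsf{i}x\cdot\xi}f(x)\,dx$ fixed above. The pointwise norm is the Frobenius norm, so everything reduces to the scalar entries $H^{ij}$, summed over $i,j$.

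On the region $|\xi|>R$, with $R\ge1>0$, we have $\langle\xi\rangle\ge|\xi|>R$, and hence $1\le R^{-2m}\langle\xi\rangle^{2m}$ because $m>0$. Therefore
\begin{equation*}
	\int_{|\xi|>R}|\wh H(\xi)|^2\,d\xi\le R^{-2m}\int_{|\xi|>R}\langle\xi\rangle^{2m}|\wh H(\xi)|^2\,d\xi\le R^{-2m}(2\pi)^{n}\|H\|_{H^m(\R^n)}^2.
\end{equation*}
Taking square roots gives \eqref{eq:Sobolev-tail} with $C=(2\pi)^{n/2}$, which depends only on $n$ and on the normalization of the Fourier transform. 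This bound does not use the hypothesis $R\ge1$, the support of $H$, or the condition $\diver H=0$. The restriction $R\ge1$ only matches the range in which the estimate is later combined with Lemma~\ref{lem:low-frequency}.

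The only point needing attention is the consistency of constants between the Fourier convention and the definition of $H^m$. Any standard equivalent choice changes $C$ by a dimensional factor, which is harmless. There is no genuine obstacle.
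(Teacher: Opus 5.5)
Your proof is correct and is essentially the paper's argument: on $\{|\xi|>R\}$ you bound $1\le R^{-2m}(1+|\xi|^2)^m$ and then apply Plancherel. Your explicit tracking of the $(2\pi)^{n/2}$ factor just makes the constant $C$ concrete, and your remark that the hypothesis $R\ge1$ is not actually needed for this step is accurate.
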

	
	\begin{proof}
		On the set $|\xi|>R$, one has $(1+|\xi|^2)^m\ge R^{2m}$. It follows that
		\[
		\int_{|\xi|>R}|\wh H(\xi)|^2\,d\xi\le R^{-2m}\int_{\R^n}(1+|\xi|^2)^m|\wh H(\xi)|^2\,d\xi.
		\]  
		Plancherel's theorem gives \eqref{eq:Sobolev-tail}.
	\end{proof}
	
	\subsubsection{A Fourier proof of the interpolation inequality}
	
	\begin{lemma}[Fourier Gagliardo--Nirenberg estimate]\label{lem:GN}
		Let $m>n/2$. There is $C=C(n,m)>0$ such that every scalar function $F\in H^m(\R^n)$ satisfies
		\begin{equation}\label{eq:GN}
			\|F\|_{L^\infty}\le C\|F\|_{L^2}^{1-\frac{n}{2m}}\|F\|_{H^m}^{\frac{n}{2m}}.
		\end{equation}
		The same estimate holds for every matrix field $F=(F_{jk})\in H^m(\R^n;\C^{n\times n})$, with the matrix norms fixed above.
	\end{lemma}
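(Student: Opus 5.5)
The plan is the standard Fourier splitting argument. For a scalar $F\in H^m(\R^n)$ with $m>n/2$, the Fourier inversion formula, with the convention $\wh f(\xi)=\int e^{-\mathsf{i}x\cdot\xi}f\,dx$, gives
\[
\|F\|_{L^\infty}\le(2\pi)^{-n}\|\wh F\|_{L^1}.
\]
Accordingly we bound $\|\wh F\|_{L^1}$. Because $m>n/2$, the function $\wh F$ is integrable, so $F$ has a continuous representative and the inversion formula holds pointwise.

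Fix a radius $\lambda>0$ and split the $\xi$-integral at $|\xi|=\lambda$. On the ball $|\xi|\le\lambda$, Cauchy--Schwarz gives
\[
\int_{|\xi|\le\lambda}|\wh F|\,d\xi\le C\lambda^{n/2}\|\wh F\|_{L^2}=C\lambda^{n/2}\|F\|_{L^2}.
\]
On the complement, insert the weight $(1+|\xi|^2)^{m/2}$ and apply Cauchy--Schwarz again:
\[
\int_{|\xi|>\lambda}|\wh F|\,d\xi\le\Bigl(\int_{|\xi|>\lambda}|\xi|^{-2m}\,d\xi\Bigr)^{1/2}\|F\|_{H^m}\le C\lambda^{n/2-m}\|F\|_{H^m}.
\]
The tail integral converges precisely because $2m>n$. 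This is the only place the hypothesis $m>n/2$ is used.

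Adding the two bounds gives
\[
\|F\|_{L^\infty}\le C\bigl(\lambda^{n/2}\|F\|_{L^2}+\lambda^{n/2-m}\|F\|_{H^m}\bigr).
\]
If $F=0$ there is nothing to prove. Otherwise we optimize by choosing $\lambda=(\|F\|_{H^m}/\|F\|_{L^2})^{1/m}$, which balances the two terms. Then each term equals $\|F\|_{L^2}^{1-\frac{n}{2m}}\|F\|_{H^m}^{\frac{n}{2m}}$, which is \eqref{eq:GN}. No lower bound on $\lambda$ is needed, since the ball estimate holds for every radius. This optimization step is where care is needed.

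For matrix fields, apply the scalar estimate to each entry $F_{jk}$. Since $\|F_{jk}\|_{L^2}\le\|F\|_{L^2}$ and $\|F_{jk}\|_{H^m}\le\|F\|_{H^m}$ in the Frobenius-based norms, each entry satisfies the same bound. The operator norm is dominated pointwise by the Frobenius norm, which is at most $n$ times the largest entry. Hence \eqref{eq:GN} holds with $C$ replaced by $nC$.

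There is no genuine obstacle here. The only points needing attention are the convergence of the tail integral under $m>n/2$ and the bookkeeping of exponents in the optimization. In the application, $m=s_*$ gives $\frac{n}{2m}=1-\frac{1}{s_*}$, which is the exponent quoted in Remark~\ref{rem:corrector-frequency-range}.
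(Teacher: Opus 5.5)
Your proposal is correct and follows essentially the same Fourier splitting argument as the paper, with the same balancing radius $(\|F\|_{H^m}/\|F\|_{L^2})^{1/m}$; the paper notes that this radius is at least $1$, although, as you observe, that fact is not needed. The only cosmetic difference is in the matrix case: the paper runs the argument directly with the Frobenius norm of $\wh F(\xi)$ and keeps the scalar constant, while you apply the scalar bound entrywise and pay a harmless factor of $n$.
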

	
	\begin{proof}
		We prove the scalar and matrix-valued estimates at the same time. In the matrix-valued case, every occurrence of $|\wh F(\xi)|$ below denotes $(\sum_{j,k=1}^n|\wh F_{jk}(\xi)|^2)^{1/2}$. Set $X_F=\|F\|_{L^2}$ and $M_F=\|F\|_{H^m}$. If $X_F=0$, then $F=0$ almost everywhere. We assume that $X_F>0$. Since $M_F\ge X_F$, the number $R=(M_F/X_F)^{1/m}$ satisfies $R\ge1$.
		
		In the scalar case, the Fourier inversion formula gives $\|F\|_{L^\infty}\le C\int_{\R^n}|\wh F(\xi)|\,d\xi$. For a matrix field, Fourier inversion is applied entrywise. For every complex matrix $M$, one has $\sup_{|v|=1}|Mv|\le |M|$. Applying Fourier inversion entrywise and using this inequality gives the same bound for matrix fields.
		
		On the region $|\xi|\le R$, the Cauchy--Schwarz inequality gives $\int_{|\xi|\le R}|\wh F(\xi)|\,d\xi\le CR^{n/2}X_F$. On the region $|\xi|>R$, the weighted Cauchy--Schwarz inequality gives
		\[
		\int_{|\xi|>R}|\wh F(\xi)|\,d\xi\le M_F\bigg(\int_{|\xi|>R}(1+|\xi|^2)^{-m}\,d\xi\bigg)^{1/2}\le CM_FR^{n/2-m},
		\] 
		where the last estimate uses $m>n/2$.
		
		For $R=(M_F/X_F)^{1/m}$, one has $R^{n/2}X_F=X_F^{1-\frac{n}{2m}}M_F^{\frac{n}{2m}}$ and $M_FR^{n/2-m}=X_F^{1-\frac{n}{2m}}M_F^{\frac{n}{2m}}$. Combining the low- and high-frequency estimates proves \eqref{eq:GN}.
	\end{proof}
	
	\subsubsection{Proof of the conductivity theorem}
	
	The Sobolev order $s_*=n/2+1$ comes from the factor $R^{(n+2)/2}$ in the low-frequency estimate. The quadratic dependence on $X$ allows us to choose the splitting radius in terms of $X$, rather than $\delta$. At the order $s_*$, the Sobolev tail then becomes a constant multiple of $MX$, while Gagliardo--Nirenberg interpolation places the chosen radius in the available frequency range. This identifies the regularity used in the argument; it does not assert that $s_*$ is optimal for the inverse problem.
	
	\begin{proof}[Proof of Theorem~\ref{thm:conductivity-main}]
		Set $X=\|H\|_{L^2(\R^n)}$, $\delta=\|H\|_{L^\infty}$, and $M=\|H\|_{H^{s_*}(\R^n)}$. If $X=0$ or $\delta=0$, then $H=0$ almost everywhere. We assume that $X>0$ and $\delta>0$.
		
		Since $s_*>n/2$, applying the Sobolev embedding entrywise gives $\delta\le C_SM$ for a constant $C_S>0$. All constants below depend only on $n$, $\Omega$, and the fixed set $K$. We choose $\varepsilon_0$ at the end, after these constants have been fixed, and require $C_S\varepsilon_0\le\delta_0$. Under the assumption $M<\varepsilon_0$, Proposition~\ref{prop:corrector-estimate}, Proposition~\ref{prop:Fourier-estimate}, and Lemma~\ref{lem:low-frequency} are then available.
		
		For every $1\le R\le c_1\delta^{-1}$, Plancherel's theorem, Lemma~\ref{lem:low-frequency}, and Lemma~\ref{lem:Sobolev-tail} with $m=s_*$ give
		\begin{equation}\label{eq:main-frequency-split}
			X\le C_0R^{s_*}X^2+C_0R^{-s_*}M
		\end{equation}
		for a constant $C_0>0$. Fix $\kappa>0$ so small that $C_0\kappa^{s_*}\le1/2$, and set $R=\kappa X^{-1/s_*}$. We verify that this choice lies in the required range. Since $X\le M$, the condition $M<\varepsilon_0\le\kappa^{s_*}$ gives $R\ge1$. Lemma~\ref{lem:GN} with $m=s_*$ gives
		\begin{equation}\label{eq:main-delta-interpolation}
			\delta\le C_2X^{1-\frac{n}{2s_*}}M^{\frac{n}{2s_*}}=C_2X^{1/s_*}M^{1-1/s_*},
		\end{equation}
		where we used $2s_*=n+2$. Multiplication by $R=\kappa X^{-1/s_*}$ yields $R\delta\le C_2\kappa M^{1-1/s_*}$. Since $1-1/s_*=n/(n+2)>0$, the inequality $R\le c_1\delta^{-1}$ holds once $\varepsilon_0$ is chosen so that $C_2\kappa\varepsilon_0^{1-1/s_*}\le c_1$.
		
		For this value of $R$, the first term on the right-hand side of \eqref{eq:main-frequency-split} is $C_0\kappa^{s_*}X\le X/2$. Moving this term to the left-hand side of \eqref{eq:main-frequency-split} gives
		\begin{equation}\label{eq:main-X-intermediate}
			X\le 2C_0R^{-s_*}M=2C_0\kappa^{-s_*}MX.
		\end{equation}
		The constants $C_S$, $\delta_0$, $C_0$, $\kappa$, $C_2$, and $c_1$ are independent of $H$. Choose $\varepsilon_0>0$ so small that $C_S\varepsilon_0\le\delta_0$, $\varepsilon_0\le\kappa^{s_*}$, $C_2\kappa\varepsilon_0^{1-1/s_*}\le c_1$, and $2C_0\kappa^{-s_*}\varepsilon_0<1$. If $M<\varepsilon_0$, all the estimates above are valid. Since $X>0$, division of \eqref{eq:main-X-intermediate} by $X$ gives $1\le 2C_0\kappa^{-s_*}M<2C_0\kappa^{-s_*}\varepsilon_0<1$, which is impossible. We conclude that $X=0$, and hence $H=0$.
	\end{proof}

	\section{Metric rigidity in dimensions \texorpdfstring{$n\ge3$}{n>=3}}\label{sec:metric}
	
	The metric reduction uses two different boundary fixing diffeomorphisms, and their roles are distinct. The first is the canonical harmonic map $Y$, obtained by solving a Dirichlet problem for each Cartesian coordinate function. The smallness of $g-e$ makes $Y$ a global diffeomorphism of $\ol\Omega$. Its global invertibility allows us to use it as a change of coordinates on the whole domain, while the harmonicity of its components forces the transformed conductivity to be divergence free.
	
	The second diffeomorphism, denoted by $F_0$, is obtained later from smooth boundary determination. Its only role is to put the metric in a representative whose full boundary jet agrees with the Euclidean metric. The composition $Z=Y\circ F_0$ compares this boundary normalized representative with the canonical harmonic gauge. The conductivity theorem will be applied to the perturbation associated with $Y_*g$, not to the metric obtained from $F_0$.
	
	\subsection{The canonical global harmonic gauge}
	
	Let $g$ be a smooth Riemannian metric on $\ol\Omega$ and set
	$A_g=(\det g)^{1/2}g^{-1}$. For every smooth function $u$,
	$-\diver(A_g\nabla u)=(\det g)^{1/2}(-\Delta_g u)$, so the equations
	$-\Delta_gu=0$ and $-\diver(A_g\nabla u)=0$ are equivalent.
	
	We use $\Lambda_g$ for the geometric DN map. If $u_f$ is $g$-harmonic with boundary value $f$ and $V$ has boundary trace $\psi$, Green's formula gives
	\begin{equation}\label{eq:metric-conductivity-Green}
		\int_{\p\Omega}(\Lambda_gf)\psi\,dS_g
		=\int_\Omega A_g\nabla u_f\cdot\nabla V\,dx.
	\end{equation}
	In particular, when the induced boundary metric is Euclidean, the geometric DN map agrees with the conductivity DN map associated with $A_g$.
	
	For $1\le j\le n$, recall that $y^j$ is the unique solution of \eqref{eq:BVP-harmonic-coord}, and set $Y=(y^1,\ldots,y^n)$. At this stage, $Y$ is only a smooth map from $\ol\Omega$ to $\R^n$. The next proposition shows that the near-Euclidean assumption makes this map a global boundary fixing diffeomorphism. Only after this fact has been proved do we use $Y$ as a change of coordinates.
	
	\begin{proposition}[Canonical global harmonic gauge]\label{prop:harmonic-coordinates}
		Let $n\ge3$ and let $s_*-k<\alpha<1$. There are constants $\varepsilon,C>0$, depending only on $n$, $\Omega$, and $\alpha$, with the following property. Suppose that $g\in C^\infty(\ol\Omega;\operatorname{Sym}^2\R^n)$ is a Riemannian metric satisfying $\|g-e\|_{C^{k,\alpha}(\ol\Omega)}<\varepsilon$. Let $Y=(y^1,\ldots,y^n)$ be the map defined by \eqref{eq:BVP-harmonic-coord}. Then $Y\in\Diff^\infty(\ol\Omega)$, $Y|_{\p\Omega}=\Id$, and
		\begin{equation}\label{eq:Y-Schauder}
			\|Y-\Id\|_{C^{k+1,\alpha}(\ol\Omega)}
			\le C\|g-e\|_{C^{k,\alpha}(\ol\Omega)}.
		\end{equation}
		Set $\widetilde g=Y_*g=(Y^{-1})^*g$ and $\widetilde A=(\det\widetilde g)^{1/2}\widetilde g^{-1}=Y_*A_g=I+H$. Then $\Lambda_{\widetilde g}=\Lambda_g$, $\diver H=0$ in $\Omega$, and
		\begin{equation}\label{eq:tilde-A-smallness}
			\|H\|_{C^{k,\alpha}(\ol\Omega)}
			\le C\|g-e\|_{C^{k,\alpha}(\ol\Omega)}.
		\end{equation}
	\end{proposition}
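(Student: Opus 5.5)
We would prove Proposition~\ref{prop:harmonic-coordinates} in three steps. First we establish a Schauder estimate for $Y-\Id$. Next we show that $Y$ is a global diffeomorphism via a degree argument. Finally we compute the pushed-forward conductivity and its divergence.

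\textbf{Schauder bound.} Write $y^j=x^j+z^j$. Then $z^j\in C^\infty(\ol\Omega)$ vanishes on $\p\Omega$ and solves
\[
-\diver(A_g\nabla z^j)=\diver(A_g\nabla x^j)=\p_iA_g^{ij}=\p_i(A_g-I)^{ij}.
\]
The map $g\mapsto A_g=(\det g)^{1/2}g^{-1}$ is smooth near $e$. Hence $\|A_g-I\|_{C^{k,\alpha}}\le C\|g-e\|_{C^{k,\alpha}}$ once $\varepsilon$ is small. We then apply global Schauder theory for divergence-form operators with $C^{k,\alpha}$ coefficients, which are uniformly close to $I$, on a smooth domain with zero Dirichlet data. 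This gives
\[
\|z^j\|_{C^{k+1,\alpha}}\le C\|\diver(A_g-I)\|_{C^{k-1,\alpha}}\le C\|g-e\|_{C^{k,\alpha}}.
\]
Uniformity of the constant follows because the coefficients range over a bounded $C^{k,\alpha}$ neighbourhood of $I$. Smoothness of $Y$ up to the boundary follows from elliptic regularity, since $g$ is smooth. This proves \eqref{eq:Y-Schauder}.

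\textbf{Global diffeomorphism.} Since $k\ge2$, we have $\|DY-I\|_{C^0}\le C\varepsilon<1/2$, so $Y$ is a local diffeomorphism up to the boundary. We expect this step to be the main obstacle, because $\Omega$ need not be convex; in particular, injectivity cannot simply be read off from the mean value inequality along segments. We would argue by degree theory. $Y$ is smooth on $\ol\Omega$, equals $\Id$ on $\p\Omega$, and has $\det DY>0$. So for $p\notin\p\Omega$ we have $\deg(Y,\Omega,p)=\deg(\Id,\Omega,p)$, which equals $1$ for $p\in\Omega$ and $0$ for $p\notin\ol\Omega$. Because the Jacobian is positive, the degree counts preimages. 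Hence each $p\in\Omega$ has exactly one preimage in $\Omega$, and no point outside $\ol\Omega$ is hit by $Y|_\Omega$.

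It remains to check that interior points do not map to $\p\Omega$. Suppose $Y(x)\in\p\Omega$ with $x\in\Omega$. Since $Y$ is open, it would map a neighbourhood of $x$ onto a neighbourhood of $Y(x)$, which contains points outside $\ol\Omega$; this contradicts the previous count. Therefore $Y:\ol\Omega\to\ol\Omega$ is a smooth bijection with invertible differential, hence $Y\in\Diff^\infty(\ol\Omega)$ with $Y|_{\p\Omega}=\Id$.

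\textbf{Transformed conductivity.} The formula $\widetilde A=Y_*A_g$ is the usual transformation rule $\widetilde A(y)=DY\,A_g\,DY^T/\det DY$, evaluated at $x=Y^{-1}(y)$. With $\widetilde g=(Y^{-1})^*g$, the transformed metric conductivity equals $(\det\widetilde g)^{1/2}\widetilde g^{-1}$ by direct computation. Invariance of the Laplace--Beltrami operator under isometries, together with $Y|_{\p\Omega}=\Id$, gives $\Lambda_{\widetilde g}=\Lambda_g$.

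For the divergence, each coordinate function $y^j$ is $\widetilde g$-harmonic in the new coordinates, because $y^j\circ Y^{-1}$ is the $j$-th coordinate. Hence
\[
0=\diver(\widetilde A\nabla y^j)=\p_i\widetilde A^{ij},
\]
which means $\diver H=0$.

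Finally, $H=\widetilde A-I$ is a smooth function of $DY\circ Y^{-1}$ and $A_g\circ Y^{-1}$. The map $Y^{-1}$ is $C^{k+1,\alpha}$-close to $\Id$, with inverse-function bounds. Composition and product estimates in $C^{k,\alpha}$ then give \eqref{eq:tilde-A-smallness}.
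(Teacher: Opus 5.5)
Your proof is correct. The Schauder step, the pushforward formula, $\Lambda_{\widetilde g}=\Lambda_g$, and the harmonic-gauge computation agree with the paper in substance. The paper checks $\p_{y_i}\widetilde A^{ij}=0$ by a weak change of variables with test functions $\phi\circ Y$, which is the same fact as your observation that $y^j\circ Y^{-1}$ is $\widetilde g$-harmonic.

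The genuine difference is the global injectivity step. You are right that the mean value inequality on a nonconvex $\Omega$ does not directly give injectivity. The paper sidesteps this without degree theory:
\begin{itemize}
\item It extends $v=Y-\Id$, using a fixed Seeley extension operator and a cutoff, to a compactly supported $\widetilde v$ on $\R^n$ with $\|D\widetilde v\|_{L^\infty(\R^n)}<1/2$.
\item Each $\widetilde Y_s=\Id+s\widetilde v$ then satisfies $|\widetilde Y_s(x)-\widetilde Y_s(x')|\ge\frac12|x-x'|$ on the convex space $\R^n$, so it is a proper local diffeomorphism and hence a global diffeomorphism of $\R^n$.
\item Since every $\widetilde Y_s$ fixes $\p\Omega$ pointwise, the trajectories $s\mapsto\widetilde Y_s(x)$ cannot cross $\p\Omega$, which yields $\widetilde Y_1(\Omega)=\Omega$.
\end{itemize}

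Your Brouwer degree argument is also valid and avoids the extension operator. Its ingredients are that the degree depends only on boundary values, that a positive Jacobian turns it into a preimage count, and that openness excludes interior points landing on $\p\Omega$. The paper's homotopy is exactly the linear homotopy $\Id+tv$ that underlies the boundary-value invariance of the degree.

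What the paper's route buys in addition is a global Euclidean bound $|Y^{-1}(y)-Y^{-1}(y')|\le 2|y-y'|$ on $\ol\Omega$. The paper uses this in the H\"older composition estimates for $Y^{-1}$ and for $\widetilde A=\mathcal T(DY,A_g)\circ Y^{-1}$.

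In your version, the phrase ``inverse-function bounds'' hides this point. A pointwise bound $|DY^{-1}|\le2$ only controls Lipschitz constants along paths inside $\Omega$. For a nonconvex domain you should add that smooth bounded domains are quasiconvex, meaning intrinsic and Euclidean distances are comparable, or borrow the paper's extension. Only then do the $C^{k,\alpha}$ composition estimates go through with constants depending only on $n$, $\Omega$, and $\alpha$. The inductive control of $D^\ell Y^{-1}$ via the chain rule applied to $Y\circ Y^{-1}=\Id$, which the paper writes out, is the routine content behind your final sentence.
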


	\begin{proof}
		Let $e_j$ denote the $j$th standard basis vector of $\R^n$, and set $v^j=y^j-x^j$ and $v=(v^1,\ldots,v^n)$. The boundary conditions for $y^j$ give $v|_{\p\Omega}=0$ and $Y=\Id+v$. Since $\diver(A_g\nabla y^j)=0$ and $\nabla x^j=e_j$, the function $v^j\in H_0^1(\Omega)$ satisfies $\diver(A_g\nabla v^j)=-\diver((A_g-I)e_j)$ in $\Omega$.
		
		In the fixed Euclidean coordinates, let $\mathcal P\subset\operatorname{Sym}^2\R^n$ be the open set of positive definite symmetric matrices, and define $\mathcal C:\mathcal P\to\operatorname{Sym}^2\R^n$ by $\mathcal C(q)=(\det q)^{1/2}q^{-1}$. Then $A_g(x)=\mathcal C(g(x))$ for every $x\in\ol\Omega$, while $\mathcal C(e)=I$.
		
		After decreasing $\varepsilon$, every matrix $e+\tau(g(x)-e)$, with $x\in\ol\Omega$ and $0\le\tau\le1$, has all its eigenvalues in the fixed interval $[1/2,3/2]$. These matrices belong to a fixed compact subset $\mathcal K\Subset\mathcal P$. For $q\in\mathcal P$ and $S\in\operatorname{Sym}^2\R^n$, direct differentiation gives $D\mathcal C(q)[S]=(\det q)^{1/2}\big(\frac12\operatorname{tr}(q^{-1}S)q^{-1}-q^{-1}Sq^{-1}\big)$. Since $\mathcal C$ is smooth on $\mathcal P$, its derivatives of every fixed finite order are uniformly bounded on $\mathcal K$. The fundamental theorem of calculus, applied pointwise in $x$, gives $A_g(x)-I=\int_0^1D\mathcal C\bigl(e+\tau(g(x)-e)\bigr)[g(x)-e]\,d\tau$. After differentiating this identity in $x$ up to order $k$, every resulting term is a product of derivatives of $g-e$ and a derivative of $\mathcal C$ evaluated at $e+\tau(g-e)$. Every term contains at least one factor involving a derivative of $g-e$ of order between zero and $k$. The uniform bounds for the derivatives of $\mathcal C$ on $\mathcal K$, the product estimates in $C^{k,\alpha}(\ol\Omega)$, and the corresponding H\"older seminorm estimates give
		\begin{equation}\label{eq:Ag-smallness}
			\|A_g-I\|_{C^{k,\alpha}(\ol\Omega)}
			\le C\|g-e\|_{C^{k,\alpha}(\ol\Omega)}.
		\end{equation}
		The same choice of $\varepsilon$ places the eigenvalues of $A_g(x)$ between two fixed positive constants and gives a uniform bound for $\|A_g\|_{C^{k,\alpha}(\ol\Omega)}$. The constants in the global boundary Schauder and $C^0$ estimates below may be chosen uniformly for all metrics satisfying the stated smallness condition.
		
		Expanding the equation for $v^j$ gives $A_g^{i\ell}\p_i\p_\ell v^j+(\p_iA_g^{i\ell})\p_\ell v^j=-\p_i(A_g^{ij}-\delta^{ij})$, where $\delta^{ij}$ are the entries of the identity matrix. The leading coefficients belong to $C^{k,\alpha}(\ol\Omega)$, the first order coefficients belong to $C^{k-1,\alpha}(\ol\Omega)$, and the right-hand side belongs to $C^{k-1,\alpha}(\ol\Omega)$. The global boundary Schauder estimate for the zero Dirichlet problem, together with the uniform $C^0$ estimate, gives
		\begin{equation}\label{eq:estimate-of-v}
			\|v^j\|_{C^{k+1,\alpha}(\ol\Omega)}
			\le C\|\p_i(A_g^{ij}-\delta^{ij})\|_{C^{k-1,\alpha}(\ol\Omega)}
			\le C\|A_g-I\|_{C^{k,\alpha}(\ol\Omega)}.
		\end{equation}
		Summing over $1\le j\le n$ and using \eqref{eq:Ag-smallness} proves \eqref{eq:Y-Schauder}. Since $g$ and the boundary data are smooth, smooth elliptic regularity gives $Y\in C^\infty(\ol\Omega)$.
		
		We next prove that $Y$ is a diffeomorphism of $\ol\Omega$. Since $\p\Omega$ is smooth, the half-space extension construction, applied in boundary charts and combined with a smooth partition of unity, gives a linear extension operator that is bounded on H\"older spaces and preserves smoothness \cite{Seeley1964}. Applied componentwise, it gives a bounded extension operator $E:C^{k+1,\alpha}(\ol\Omega;\R^n)\to C^{k+1,\alpha}(\R^n;\R^n)$ that maps smooth functions to smooth functions. Fix this operator independently of $g$. Choose $\chi\in C_c^\infty(\R^n)$ such that $\chi=1$ in a neighborhood of $\ol\Omega$, and set $\widetilde v=\chi Ev$. Then $\widetilde v=v$ on $\ol\Omega$, $\widetilde v$ has compact support, and $\|\widetilde v\|_{C^{k+1,\alpha}(\R^n)}\le C\|v\|_{C^{k+1,\alpha}(\ol\Omega)}$, where $C$ depends only on $n$, $\Omega$, $\alpha$, and the fixed cutoff.
		
		For the derivative matrix, we use $\|D\widetilde v\|_{L^\infty(\R^n)}=\sup_{x\in\R^n}
		\sup_{|\xi|=1}|D\widetilde v(x)\xi|$. After decreasing $\varepsilon$, estimates \eqref{eq:Y-Schauder}, \eqref{eq:Ag-smallness} and \eqref{eq:estimate-of-v} give $\|D\widetilde v\|_{L^\infty(\R^n)}<1/2$. For $0\le s\le1$, define
		\begin{equation}\label{eq:def-tilde-Y}
			\widetilde Y_s(x)=x+s\widetilde v(x).
		\end{equation}
		The line segment formula gives, for every $x,x'\in\R^n$,
		\[
		|\widetilde Y_s(x)-\widetilde Y_s(x')|\ge \bigl(1-s\|D\widetilde v\|_{L^\infty(\R^n)}\bigr)|x-x'|
		\ge\frac12|x-x'|.
		\]
		It follows that $\widetilde Y_s$ is injective. Its derivative is $D\widetilde Y_s=I+sD\widetilde v$. If
		$(I+sD\widetilde v(x))\xi=0$, then $|\xi|\le s\|D\widetilde v\|_{L^\infty}|\xi|<|\xi|$ unless $\xi=0$. Thus, $D\widetilde Y_s(x)$ is invertible for every $x$, and $\widetilde Y_s$ is a local diffeomorphism.
		
		Since $\widetilde v$ has compact support, $\widetilde Y_s$ agrees with the identity outside the compact set $K_0=\supp\widetilde v$. For every compact set $L\subset\R^n$, the preimage $\widetilde Y_s^{-1}(L)$ is closed and contained in the compact set $K_0\cup L$, so it is compact. Thus, $\widetilde Y_s$ is proper. Its image is open because $\widetilde Y_s$ is a local diffeomorphism, and it is closed because $\widetilde Y_s$ is proper. The image is nonempty, and $\R^n$ is connected, so $\widetilde Y_s(\R^n)=\R^n$. It follows that $\widetilde Y_s$ is a global diffeomorphism of $\R^n$. The determinant $\det D\widetilde Y_s(x)$ is nonzero and depends continuously on $s$. Since $\det D\widetilde Y_0(x)=1$, one has $\det D\widetilde Y_s(x)>0$ for every $x\in\R^n$ and every $0\le s\le1$.
		
		Using \eqref{eq:def-tilde-Y}, the boundary condition $\widetilde v|_{\p\Omega}=v|_{\p\Omega}=0$ gives $\widetilde Y_s(p)=p$ for every $p\in\p\Omega$. Let $x\in\R^n\setminus\p\Omega$. The path $s\mapsto\widetilde Y_s(x)$ cannot meet $\p\Omega$. Indeed, if $\widetilde Y_s(x)=p\in\p\Omega$ for some $s$, then $\widetilde Y_s(p)=p$, and injectivity gives $x=p$, which is impossible. Since $\widetilde Y_0(x)=x$, this path stays in the connected component of $\R^n\setminus\p\Omega$ containing $x$. The domain $\Omega$ is connected and is both open and closed in $\R^n\setminus\p\Omega$, so it is one such component. For every $x\in\Omega$ and $0\le s\le1$, we have $\widetilde Y_s(x)\in\Omega$. This proves $\widetilde Y_s(\Omega)\subset\Omega$. Conversely, let $y\in\Omega$ and set $x=\widetilde Y_s^{-1}(y)$. The point $x$ cannot lie on $\p\Omega$, because $\widetilde Y_s$ fixes the boundary. The path $r\mapsto\widetilde Y_r(x)$ for $0\le r\le s$ avoids $\p\Omega$ and joins $x$ to $y$, so $x$ and $y$ belong to the same connected component of $\R^n\setminus\p\Omega$. Since $y\in\Omega$, we obtain $x\in\Omega$, and this proves $\widetilde Y_s(\Omega)=\Omega$. At $s=1$, the restriction of $\widetilde Y_1$ to $\ol\Omega$ is $Y=\Id+v$. The global inverse $\widetilde Y_1^{-1}$ is smooth on $\R^n$, so its restriction is smooth on $\ol\Omega$. We have proved that $Y\in\Diff^\infty(\ol\Omega)$, that $Y|_{\p\Omega}=\Id$, and that $Y$ is orientation preserving.
		
		We may now use $Y$ as a global change of coordinates. Write $y=Y(x)$ and set $\widetilde g=Y_*g=(Y^{-1})^*g$. Since $Y$ fixes $\p\Omega$ pointwise, invariance of the geometric DN map gives $\Lambda_{\widetilde g}=\Lambda_g$. At $y=Y(x)$, the transformed metric satisfies
		\[
		\widetilde g(y)=DY(x)^{-T}g(x)DY(x)^{-1},\quad \widetilde g^{-1}(y)=DY(x)g^{-1}(x)DY(x)^T, \quad (\det\widetilde g(y))^{1/2}=\frac{(\det g(x))^{1/2}}{\det DY(x)}.
		\]
		Since $\det DY(x)>0$, it follows that
		\[
		A_{\widetilde g}(y)=(\det\widetilde g(y))^{1/2}\widetilde g^{-1}(y)=\frac{DY(x)A_g(x)DY(x)^T}{\det DY(x)}=(Y_*A_g)(y).
		\]
		Set $\widetilde A=A_{\widetilde g}=Y_*A_g$.
		
		We next prove the harmonic gauge condition. For $1\le j\le n$, let $\pi_j(y)=y^j$ be the $j$th Cartesian coordinate function in the target variables. The identity $\pi_j\circ Y=y^j$ shows that $\pi_j$ is the transform of the $g$-harmonic function $y^j$. Let $\phi\in C_c^\infty(\Omega)$. Since $y^j$ is $A_g$-harmonic, the function $\phi\circ Y$ may be used as a test function. Using
		$\nabla_x y^j=DY(x)^Te_j$, $\nabla_x(\phi\circ Y)=DY(x)^T\nabla_y\phi(Y(x))$, and the change of variables $y=Y(x)$, we obtain
		\begin{equation*}
			\begin{aligned}
				0=\int_\Omega A_g(x)\nabla_x y^j(x)\cdot\nabla_x(\phi\circ Y)(x)\,dx=\int_\Omega
				\widetilde A(y)e_j\cdot\nabla_y\phi(y)\,dy=\int_\Omega \widetilde A^{ij}(y)\p_{y_i}\phi(y)\,dy.
			\end{aligned}
		\end{equation*}
		This identity holds for every $\phi\in C_c^\infty(\Omega)$, so $\p_{y_i}\widetilde A^{ij}=0$ in distributions for every $j$. Since $\widetilde A$ is smooth, the identities hold classically. Writing $\widetilde A=I+H$ gives $\diver H=0$.
		
		It remains to prove \eqref{eq:tilde-A-smallness}. The estimate for $Y^{-1}$ is needed because the pushforward coefficient is expressed in the $y$-variable and involves composition with $Y^{-1}$. Write $Y=\Id+v$. Then $\Id=Y^{-1}+v\circ Y^{-1}$, and hence $Y^{-1}-\Id=-v\circ Y^{-1}$. Differentiating $Y\circ Y^{-1}=\Id$ gives $DY^{-1}(y)=\bigl[DY(Y^{-1}(y))\bigr]^{-1}$. Since $\|DY-I\|_{L^\infty(\Omega)}<1/2$, for every $x\in\Omega$ and $\xi\in\R^n$, one has $|DY(x)\xi|\ge\bigl(1-\|DY-I\|_{L^\infty(\Omega)}\bigr)|\xi|>\frac12|\xi|$. It follows that $DY(x)$ is invertible and $|[DY(x)]^{-1}\eta|\le2|\eta|$ for every $\eta\in\R^n$, uniformly in $x$. In particular, $DY^{-1}$ is uniformly bounded. Moreover, $DY^{-1}-I=-\bigl[(DY)\circ Y^{-1}\bigr]^{-1}\bigl[(DY-I)\circ Y^{-1}\bigr]$, so the first derivative of $Y^{-1}-\Id$ is controlled linearly by $DY-I$. The global lower Lipschitz bound for $\widetilde Y_1$ also gives $|\widetilde Y_1^{-1}(y)-\widetilde Y_1^{-1}(y')|\le2|y-y'|$ for all $y,y'\in\R^n$. Thus, its restriction $Y^{-1}$ is uniformly Lipschitz on $\ol\Omega$.
		
		Put $a=\|v\|_{C^{k+1,\alpha}(\ol\Omega)}$ and assume that $a\le a_0$, where $a_0\le1$ is fixed and sufficiently small. The fixed extension estimate for $v$ and the uniform Lipschitz bound for $Y^{-1}$ give $\|(D^rv)\circ Y^{-1}\|_{C^\alpha(\ol\Omega)}\le Ca$ for $1\le r\le k+1$. To start the induction in the H\"older norm, set $B(y)=DY(Y^{-1}(y))$. The matrix identity $B(y)^{-1}-B(z)^{-1}=B(y)^{-1}(B(z)-B(y))B(z)^{-1}$ and the bound $\|B^{-1}\|_{L^\infty}\le2$ give $[DY^{-1}]_{C^\alpha}\le4[Dv\circ Y^{-1}]_{C^\alpha}\le Ca$. Together with the preceding estimate for $DY^{-1}-I$, this yields $\|DY^{-1}-I\|_{C^\alpha}\le Ca$ and $\|DY^{-1}\|_{C^\alpha}\le C$.
		
		For $2\le\ell\le k+1$, the higher order chain rule applied to $Y\circ Y^{-1}=\Id$ gives $(DY\circ Y^{-1})D^\ell Y^{-1}+\mathcal R_\ell=0$. Each term in $\mathcal R_\ell$ is a contraction of $(D^rv)\circ Y^{-1}$ with $D^{k_1}Y^{-1},\ldots,D^{k_r}Y^{-1}$, where $2\le r\le\ell$, $k_i\ge1$, and $k_1+\cdots+k_r=\ell$. In particular, every $k_i$ is at most $\ell-1$. Suppose inductively that the $C^\alpha$ norms of the derivatives of $Y^{-1}$ through order $\ell-1$ are uniformly bounded. The H\"older product estimate then gives $\|\mathcal R_\ell\|_{C^\alpha}\le C_\ell a$: if the H\"older difference falls on $(D^rv)\circ Y^{-1}$, its seminorm contributes the factor $a$; if it falls on another factor, the $L^\infty$ norm of $(D^rv)\circ Y^{-1}$ still contributes this factor. Since $(DY\circ Y^{-1})^{-1}=DY^{-1}$, we obtain $D^\ell Y^{-1}=-DY^{-1}\mathcal R_\ell$ and $\|D^\ell Y^{-1}\|_{C^\alpha}\le C_\ell a$. This completes the induction, including the H\"older estimate at order $k+1$.
		
		Finally, $Y^{-1}-\Id=-v\circ Y^{-1}$ gives $\|Y^{-1}-\Id\|_{L^\infty}\le\|v\|_{L^\infty}$. Combining this with the first derivative estimate and the estimates above gives
		\begin{equation}\label{eq:Y-inverse-Schauder}
			\|Y^{-1}-\Id\|_{C^{k+1,\alpha}(\ol\Omega)}\le C\|Y-\Id\|_{C^{k+1,\alpha}(\ol\Omega)}.
		\end{equation}
		The constant is uniform when $\|Y-\Id\|_{C^{k+1,\alpha}(\ol\Omega)}$ is sufficiently small.
		
		Define $\mathcal T(P,Q)=\frac{PQP^T}{\det P}$ for matrices $P$ with positive determinant and symmetric positive definite matrices $Q$. The map $\mathcal T$ is smooth in a neighborhood of $(I,I)$ and satisfies $\mathcal T(I,I)=I$. The pushforward formula is $\widetilde A=\mathcal T(DY,A_g)\circ Y^{-1}$. By \eqref{eq:Y-inverse-Schauder}, the maps $Y^{-1}$ remain in a fixed bounded subset of $C^{k+1,\alpha}(\ol\Omega)$, and composition with $Y^{-1}$ is uniformly bounded on $C^{k,\alpha}(\ol\Omega)$. Since $C^{k,\alpha}(\ol\Omega)$ is a Banach algebra and $\mathcal T$ is smooth near $(I,I)$, we obtain
		\begin{equation*}
			\begin{aligned}
				\|H\|_{C^{k,\alpha}(\ol\Omega)}
				&=\|\widetilde A-I\|_{C^{k,\alpha}(\ol\Omega)}
				\le C\|\mathcal T(DY,A_g)-I\|_{C^{k,\alpha}(\ol\Omega)}\\
				&\le C\bigl(\|DY-I\|_{C^{k,\alpha}(\ol\Omega)}+\|A_g-I\|_{C^{k,\alpha}(\ol\Omega)}\bigr)
				\le C\|g-e\|_{C^{k,\alpha}(\ol\Omega)},
			\end{aligned}
		\end{equation*}
		where the last inequality follows from \eqref{eq:Y-Schauder} and \eqref{eq:Ag-smallness}. This proves \eqref{eq:tilde-A-smallness}.
	\end{proof}

	From now on, $Y$ denotes the fixed global diffeomorphism constructed in Proposition~\ref{prop:harmonic-coordinates}. It is not an arbitrary coordinate change. Its global invertibility allows the metric and conductivity to be transformed on the whole domain, while the harmonicity of its components gives the divergence free condition for $H$. The perturbation used in the remainder of the metric argument is always the tensor $H=A_{Y_*g}-I$ associated with this canonical map.
	
	\subsection{Boundary determination and comparison of the two gauges}
	
	Boundary determination provides a second boundary fixing diffeomorphism used only to normalize the full boundary jet. We record the required statement. For a smooth tensor $T$, the notation $j^\infty_{\p\Omega}T=0$ means that all derivatives of its coordinate coefficients vanish on $\p\Omega$ in every boundary chart, or equivalently that $T$ vanishes to infinite order at the boundary.
	
	\begin{proposition}[Boundary determination]\label{prop:boundary-determination-zero}
		Let $g\in C^\infty(\ol\Omega;\operatorname{Sym}^2\R^n)$ be a Riemannian metric and suppose that \eqref{eq:same-DN-map} holds. Then $g$ and $e$ induce the same metric on $\p\Omega$, meaning that $\xi^Tg(x)\eta=\xi\cdot\eta$ for every $x\in\p\Omega$ and all $\xi,\eta\in\R^n$ satisfying $\xi\cdot\nu(x)=\eta\cdot\nu(x)=0$. Moreover, there exists $F_0\in\Diff^\infty(\ol\Omega)$ such that $F_0|_{\p\Omega}=\Id$ and
		\begin{equation}\label{eq:boundary-determination-zero}
			j^\infty_{\p\Omega}(F_0^*g-e)=0.
		\end{equation}
	\end{proposition}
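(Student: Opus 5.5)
The plan is to reduce the statement to the classical boundary determination results of Lee--Uhlmann \cite{LeeUhlmann1989} (see also \cite{JoshiLionheart2005}), applied to the pair $(g,e)$ on $\ol\Omega$. We first relate the geometric DN map to the full symbol calculus near the boundary. In boundary normal coordinates $(x',x^n)$ for $g$, with $x^n$ the $g$-distance to $\p\Omega$, the operator $-\Delta_g$ factors modulo smoothing operators as $(D_{x^n}+\mathsf{i}E(x)-\mathsf{i}B(x,D_{x'}))(D_{x^n}+\mathsf{i}B(x,D_{x'}))$. Here $B$ is a first order pseudodifferential operator in $x'$ depending smoothly on $x^n$, and $\Lambda_g$ agrees with $B|_{x^n=0}$ modulo smoothing, up to the density normalization coming from $dS_g$. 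Lee--Uhlmann show that the full symbol of $B|_{x^n=0}$, computed in any boundary coordinates, determines recursively all normal derivatives $\p_{x^n}^mg_{\alpha\beta}|_{x^n=0}$ of the tangential components in boundary normal coordinates. The principal symbol is $\sqrt{g^{\alpha\beta}\xi_\alpha\xi_\beta}$, and this gives the induced boundary metric.

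First step, the induced metric. We use the principal symbol of $\Lambda_g=\Lambda_e$. Since both DN maps are defined with their own unit normals, the pairing convention in \eqref{eq:metric-conductivity-Green} is irrelevant at the level of principal symbols: $\Lambda_g$ has principal symbol $|\xi'|_{g|_{T\p\Omega}}$ and $\Lambda_e$ has principal symbol $|\xi'|_{e|_{T\p\Omega}}$. Equality of these symbols for all covectors gives equality of the dual boundary metrics, hence of the induced metrics. This is exactly the first claim, $\xi^Tg\eta=\xi\cdot\eta$ for tangential $\xi,\eta$. The two surface measures $dS_g$ and $dS_e$ then coincide, so the lower order terms can be compared without any density mismatch.

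Second step, construction of $F_0$. Let $\Phi_g(p,t)$ and $\Phi_e(p,t)$ be the boundary normal (Fermi) coordinate maps from $\p\Omega\times[0,\tau)$ into collars of $\p\Omega$, defined via $g$- and $e$-geodesics normal to the boundary. Both are diffeomorphisms for small $\tau$, and both restrict to the identity at $t=0$. Set $F_1=\Phi_g\circ\Phi_e^{-1}$ on a collar. In these coordinates, both $F_1^*g$ and $e$ take the form $dt^2+h_t$, where $h_t$ is a family of metrics on $\p\Omega$. By Lee--Uhlmann, equality of the full symbols of the DN maps gives $\p_t^m h^g_t|_{t=0}=\p_t^m h^e_t|_{t=0}$ for all $m$. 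Hence $F_1^*g-e$ vanishes to infinite order at $\p\Omega$.

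It remains to extend $F_1$ to a global diffeomorphism of $\ol\Omega$ fixing $\p\Omega$. For this we use that $F_1$ is a diffeomorphism from a collar onto a collar and equals the identity on $\p\Omega$. Its differential at the boundary is also the identity: it preserves tangential directions and maps the $e$-unit normal to the $g$-unit normal, which coincides with $\nu$ because the induced metrics agree and $g(\nu_g,\cdot)$ annihilates $T\p\Omega$. Fixing the normal direction requires a small argument. Instead, we simply interpolate, using an isotopy $\Phi_g\circ\Phi_e^{-1}$ glued to $\Id$ via a cutoff in $t$ after composing with a collar reparametrization. We expect this step to be the main obstacle. The interpolation must be done so that it stays a diffeomorphism, which is standard isotopy extension: $F_1$ is isotopic to the inclusion of a thinner collar. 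Since the jet of $F_0$ at the boundary is that of $F_1$, the jet conclusion \eqref{eq:boundary-determination-zero} is preserved. Smoothness of all maps follows from smoothness of $g$ and of the geodesic flows.
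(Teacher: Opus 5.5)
Your outline matches the paper's proof. The principal symbol of the DN map gives the induced boundary metric. The boundary normal recursion of Lee--Uhlmann and Joshi--Lionheart gives $\p_t^mh_g(0)=\p_t^mh_e(0)$, hence $j^\infty_{\p\Omega}(F_1^*g-e)=0$ for $F_1=\Phi_g\circ\Phi_e^{-1}$. The collar map $F_1$ is then extended by isotopy extension.

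You assert that a suitable isotopy exists; you should construct one, as the paper does. The paper takes the normal exponential maps $\Phi_s$ of $g_s=(1-s)e+sg$, so that $\Phi_s\circ\Phi_e^{-1}$ is a path of boundary fixing collar embeddings from the inclusion to $F_1$. It then flows the cut-off field $V_s(\Phi_s(p,t))=\chi(t)\p_s\Phi_s(p,t)$. The time-one map fixes $\p\Omega$ and equals $F_1$ for $t\le\varepsilon/2$.

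One claim in your proposal is false, although you end up not relying on it: $DF_1$ need not be the identity on $\p\Omega$, because $\nu_g\ne\nu$ in general. Agreement of the induced metrics controls $\xi^Tg\eta$ only for tangential $\xi,\eta$. By contrast, $\nu_g=\nu$ holds exactly when $g\nu=\nu$ on $\p\Omega$, which is a gauge-dependent condition on the mixed and normal components.

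For example, let $g=F^*e$ with $F\in\Diff^\infty(\ol\Omega)$, $F|_{\p\Omega}=\Id$, and $DF(p)\nu(p)\ne\nu(p)$ at some $p\in\p\Omega$. Then $\Lambda_g=\Lambda_e$ and $F_1=F^{-1}$ near $\p\Omega$, but $\nu_g(p)=DF(p)^{-1}\nu(p)\ne\nu(p)$.

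What does hold is $\nu_g\cdot\nu=(\nu^Tg^{-1}\nu)^{1/2}>0$, and this suffices for an explicit interpolation. It already makes $x\mapsto x+s(F_1(x)-x)$, $0\le s\le1$, a family of boundary fixing embeddings of a sufficiently thin collar into $\ol\Omega$, to which isotopy extension applies.
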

	
\begin{proof}
	The principal symbol calculation for the DN map \cite[Proposition~1.3]{LeeUhlmann1989} shows that \eqref{eq:same-DN-map} determines the induced boundary metric. In particular, $g$ and $e$ induce the same metric on $\p\Omega$.
		
	For $0\le s\le1$, set $g_s=(1-s)e+sg$. The matrix $g_s(x)$ is positive definite for every $x\in\ol\Omega$, so $(g_s)_{0\le s\le1}$ is a smooth family of Riemannian metrics. Let $\nu$ be the outward unit normal on $\p\Omega$ with respect to $e$, and let $\nu_s$ be the outward unit normal with respect to $g_s$. In the fixed Euclidean coordinates, $\nu_s(p)=g_s(p)^{-1}\nu(p)/\sqrt{\nu(p)^Tg_s(p)^{-1}\nu(p)}$. Define $\Phi_s(p,t)=\exp_p^{g_s}(-t\nu_s(p))$. Thus, $\Phi_s(p,t)$ is the point reached at time $t$ by the $g_s$ unit speed geodesic starting from $p$ in the inward normal direction. The vector field $\nu_s$ and the geodesic equation depend smoothly on $(s,p)$. Compactness of $[0,1]\times\p\Omega$ allows us to choose $\varepsilon>0$ such that every $\Phi_s$ is a smooth embedding of $\p\Omega\times[0,\varepsilon)$ onto a neighborhood of $\p\Omega$ in $\ol\Omega$, and the map $(s,p,t)\mapsto\Phi_s(p,t)$ is smooth. Write $\Phi_e=\Phi_0$ and $\Phi_g=\Phi_1$.
		
	In these normal parametrizations, the endpoint metrics have the forms $\Phi_e^*e=dt^2+h_e(t)$ and $\Phi_g^*g=dt^2+h_g(t)$, where $h_e(t)$ and $h_g(t)$ are smooth families of metrics on $\p\Omega$. Equality of the DN maps implies equality of their full symbols. For scalar functions, the boundary normal recursion in \cite[Theorem~1.1(ii)]{JoshiLionheart2005} gives $\p_t^m h_g(0)=\p_t^m h_e(0)$ for every $m\ge0$. These are equalities of smooth symmetric tensors on $\p\Omega$.
		
	Set $U_e=\Phi_e(\p\Omega\times[0,\varepsilon))$ and define $F_{\mathrm{col}}:U_e\to\ol\Omega$ by $F_{\mathrm{col}}(\Phi_e(p,t))=\Phi_g(p,t)$. Equivalently, $F_{\mathrm{col}}=\Phi_g\circ\Phi_e^{-1}$. Since $\Phi_e(p,0)=\Phi_g(p,0)=p$, the map $F_{\mathrm{col}}$ fixes $\p\Omega$ pointwise. Moreover, $\Phi_e^*(F_{\mathrm{col}}^*g-e)=h_g(t)-h_e(t)$. Every normal derivative of the right-hand side vanishes at $t=0$. Since each such normal derivative is the zero tensor field on $\p\Omega$, all of its tangential derivatives vanish as well. It follows that
	\begin{equation}\label{eq:Fcol-boundary-flat}
		j^\infty_{\p\Omega}(F_{\mathrm{col}}^*g-e)=0.
	\end{equation}
		
	Choose $\chi\in C^\infty([0,\varepsilon))$ equal to $1$ for $0\le t\le\varepsilon/2$ and to $0$ for $3\varepsilon/4\le t<\varepsilon$. Set $U_s=\Phi_s(\p\Omega\times[0,\varepsilon))$, define $V_s(\Phi_s(p,t))=\chi(t)\p_s\Phi_s(p,t)$ on $U_s$, and extend $V_s$ by zero to $\ol\Omega\setminus U_s$. Since $\chi$ vanishes near the inner edge of the collar, this defines a smooth time dependent vector field on $\ol\Omega$. Moreover, $\Phi_s(p,0)=p$ is independent of $s$, so $V_s(p)=\p_s\Phi_s(p,0)=0$ for every $p\in\p\Omega$.
		
	For each $x\in\ol\Omega$, define $\widetilde f_s(x)$ by the initial value problem
	\begin{equation}\label{eq:boundary-gauge-flow}
		\begin{cases}
			\p_s\widetilde f_s(x)=V_s(\widetilde f_s(x)), & 0\le s\le1,\\
			\widetilde f_0(x)=x.
		\end{cases}
	\end{equation}
	Boundary points give constant solutions, so uniqueness implies $\widetilde f_s(p)=p$ for every $p\in\p\Omega$. The same uniqueness, applied backward in time, prevents an interior trajectory from reaching the boundary in finite time. Smoothness of $V_s$ and compactness of $\ol\Omega$ give solutions for all $0\le s\le1$, with smooth dependence on $(s,x)$. Moreover, for each fixed $s$, solving $\p_r\gamma(r)=V_r(\gamma(r))$ backward from $\gamma(s)=y$ gives the smooth inverse $\widetilde f_s^{-1}(y)=\gamma(0)$. Thus, $\widetilde f_s\in\Diff^\infty(\ol\Omega)$ and $\widetilde f_s|_{\p\Omega}=\Id$.
		
	Fix $p\in\p\Omega$ and $0\le t\le\varepsilon/2$. Since $\chi(t)=1$, the curve $\gamma(s)=\Phi_s(p,t)$ satisfies $\p_s\gamma(s)=V_s(\gamma(s))$ and $\gamma(0)=\Phi_e(p,t)$. Uniqueness in \eqref{eq:boundary-gauge-flow} gives $\widetilde f_s(\Phi_e(p,t))=\Phi_s(p,t)$. In particular, $F_0=\widetilde f_1$ satisfies $F_0(\Phi_e(p,t))=\Phi_g(p,t)$ for $0\le t\le\varepsilon/2$, so it agrees with $F_{\mathrm{col}}$ near $\p\Omega$. Formula~\eqref{eq:Fcol-boundary-flat} now proves \eqref{eq:boundary-determination-zero}.
\end{proof}
	
The conductivity associated with $F_0^*g$ need not be divergence free. We use this auxiliary gauge only to prove boundary flatness in the canonical harmonic coordinates, without requiring any quantitative estimate for $F_0$.

\begin{lemma}[Boundary flatness in the canonical harmonic gauge]\label{lem:harmonic-boundary-flatness}
	Assume the hypotheses of Proposition~\ref{prop:harmonic-coordinates} and suppose that $\Lambda_g=\Lambda_e$. Let $Y$ be the canonical harmonic map constructed there, set $\widetilde g=Y_*g$, and write $A_{\widetilde g}=I+H$. Then
	\begin{equation}\label{eq:boundary-flatness}
		j^\infty_{\p\Omega}(\widetilde g-e)=0,\quad j^\infty_{\p\Omega}H=0.
	\end{equation}
\end{lemma}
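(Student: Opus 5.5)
Set $\widehat g=F_0^*g$, where $F_0$ is the boundary fixing diffeomorphism from Proposition~\ref{prop:boundary-determination-zero}. Then $\widehat g$ is a smooth metric on $\ol\Omega$ with $j^\infty_{\p\Omega}(\widehat g-e)=0$. Set $Z=Y\circ F_0$. This is a boundary fixing diffeomorphism with $Z_*\widehat g=Y_*g=\widetilde g$. Equivalently, $Z^{-1}$ pulls $\widetilde g$ back to $\widehat g$.

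The key observation is that the components of $Z$ are $\widehat g$-harmonic. Indeed, $z^j=y^j\circ F_0$, and harmonicity is invariant under isometric change of variables: $y^j$ is $g$-harmonic and $F_0$ is an isometry from $(\ol\Omega,\widehat g)$ to $(\ol\Omega,g)$. Moreover $z^j|_{\p\Omega}=x^j$, since both $F_0$ and $Y$ fix the boundary. Hence $w^j=z^j-x^j$ solves the Dirichlet problem
\[
\Delta_{\widehat g}w^j=-\Delta_{\widehat g}x^j \ \text{in }\Omega,\qquad w^j|_{\p\Omega}=0 .
\]

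We show $j^\infty_{\p\Omega}w^j=0$. The right-hand side $-\Delta_{\widehat g}x^j$ vanishes to infinite order at $\p\Omega$, because $\widehat g-e$ does and $\Delta_e x^j=0$. Work in a boundary chart with $t$ the distance to $\p\Omega$. The operator $\Delta_{\widehat g}$ is elliptic and $\p\Omega$ is noncharacteristic, so the equation determines $\p_t^{m+2}w$ at $t=0$ from $\p_t^{\le m+1}w$ and the jet of the source there. This gives one relation per order but says nothing about $\p_t w|_{t=0}$. That Neumann datum is free a priori, so an extra input is needed. We get it from the DN map. The functions $z^j$ are $\widehat g$-harmonic with boundary data $x^j$, and $\Lambda_{\widehat g}=\Lambda_g=\Lambda_e$. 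Hence the $\widehat g$-normal derivative of $z^j$ equals $\p_\nu x^j$. Since $\widehat g=e$ at the boundary together with all derivatives, the normals agree, so $\p_\nu w^j=0$ on $\p\Omega$. Now the recursion, started from $w=\p_tw=0$ with a flat source, gives inductively $\p_t^m w=0$ on $\p\Omega$ for all $m$. Tangential derivatives of these also vanish, so $j^\infty_{\p\Omega}(Z-\Id)=0$.

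It follows that $DZ-I$ is flat at $\p\Omega$, and then so is $DZ^{-1}-I$, by the inverse function formula and $Z|_{\p\Omega}=\Id$. Write
\[
\widetilde g=(Z^{-1})^*\widehat g=(DZ^{-1})^T(\widehat g\circ Z^{-1})\,DZ^{-1}.
\]
Each factor equals its Euclidean value plus a term flat at $\p\Omega$, and composition with $Z^{-1}$ preserves flatness since $Z^{-1}$ maps $\p\Omega$ to itself. This gives $j^\infty_{\p\Omega}(\widetilde g-e)=0$. Finally $H=\mathcal C(\widetilde g)-\mathcal C(e)$ with $\mathcal C(q)=(\det q)^{1/2}q^{-1}$ smooth near $e$, so $H$ is flat at the boundary as well.

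The main obstacle is the normal-derivative step. The Dirichlet data alone leave $\p_t w$ undetermined, and the argument depends on correctly identifying the geometric DN map $\Lambda_{\widehat g}$ with the Euclidean one, including the agreement of normals and of boundary measures $dS_{\widehat g}=dS_e$, both of which hold because $\widehat g=e$ on $\p\Omega$. Invariance $\Lambda_{F_0^*g}=\Lambda_g$ under boundary fixing diffeomorphisms is what makes this identification possible. Once $\p_\nu w^j=0$ is in hand, the rest is a routine formal jet recursion.
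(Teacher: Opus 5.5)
Your proposal is correct and follows essentially the same route as the paper. The shared steps are the auxiliary gauge $F_0$, the composite $Z=Y\circ F_0$ with $\widehat g$-harmonic components, zero Cauchy data for $Z^j-x^j$ (from $\Lambda_{\widehat g}=\Lambda_e$ and the flat source $-\Delta_{\widehat g}x^j$), the jet recursion, and finally transporting flatness through $\widetilde g=(Z^{-1})^*\widehat g$ to $H$. One wording slip: since $\widehat g=Z^*\widetilde g$, it is $Z$ and not $Z^{-1}$ that pulls $\widetilde g$ back to $\widehat g$, although the formula you actually use later is the correct one.
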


\begin{proof}
	Let $F_0$ be given by Proposition~\ref{prop:boundary-determination-zero}, and set $g_0=F_0^*g$ and $Z=Y\circ F_0$. Then $\Lambda_{g_0}=\Lambda_e$ and
	\begin{equation}\label{eq:g0-boundary-flat}
		j^\infty_{\p\Omega}(g_0-e)=0.
	\end{equation}
	The change of coordinates gives $\Delta_{g_0}Z^j=0$ in $\Omega$ and $Z^j=x^j$ on $\p\Omega$. Since $g_0=e$ on the boundary, their unit normals agree there. Thus, equality of the DN maps gives $\p_{\nu_{g_0}}Z^j=\Lambda_{g_0}(x^j|_{\p\Omega})=\Lambda_e(x^j|_{\p\Omega})=\p_{\nu_{g_0}}x^j$. This implies that $w^j=Z^j-x^j$ has zero Dirichlet and Neumann data and satisfies
	\begin{equation}\label{eq:w-flat-source}
		\Delta_{g_0}w^j=f^j,\quad f^j=-\Delta_{g_0}x^j,\quad j^\infty_{\p\Omega}f^j=0,
	\end{equation}
	where the flatness of $f^j$ follows from \eqref{eq:g0-boundary-flat} and $\Delta_ex^j=0$.
	
	In a boundary chart $(x',t)$ with $t=0$ on $\p\Omega$, the zero Dirichlet data imply that all tangential derivatives of $w^j$ vanish there. The zero Neumann data then give $\p_tw^j=0$, since the unit normal has a nonzero component in the transverse direction $\p_t$. Let $a^{tt}>0$ be the coefficient of $\p_t^2$ in $\Delta_{g_0}$. Suppose that all derivatives of $w^j$ involving at most $m+1$ derivatives in $t$ vanish on the boundary. Applying $\p_t^m$ to \eqref{eq:w-flat-source} and restricting to $t=0$ gives $a^{tt}\p_t^{m+2}w^j=0$: all remaining terms involve at most $m+1$ derivatives in $t$ of $w^j$ or a derivative of the flat source $f^j$. Tangential differentiation and induction yield
	\begin{equation}\label{eq:Z-flat}
		j^\infty_{\p\Omega}(Z-\Id)=0,
	\end{equation}
	with the jet understood componentwise.
	
	The identity $Z^{-1}-\Id=-(Z-\Id)\circ Z^{-1}$ also gives $j^\infty_{\p\Omega}(Z^{-1}-\Id)=0$. Since $\widetilde g=Z_*g_0$, we have $\widetilde g(y)=DZ^{-1}(y)^Tg_0(Z^{-1}(y))DZ^{-1}(y)$. The chain and product rules, together with \eqref{eq:g0-boundary-flat} and \eqref{eq:Z-flat}, give $j^\infty_{\p\Omega}(\widetilde g-e)=0$. The smooth dependence of $A_{\widetilde g}=(\det\widetilde g)^{1/2}\widetilde g^{-1}$ on $\widetilde g$ then gives $j^\infty_{\p\Omega}H=0$.
\end{proof}
	
	Hence, the perturbation $H$ in the canonical harmonic gauge is divergence free, satisfies \eqref{eq:tilde-A-smallness}, and vanishes to infinite order at the boundary. The auxiliary diffeomorphism $F_0$ is not used below.
	
	\subsection{Exterior extension and transfer of the DN map}
	
	From now on, $H$ always denotes the perturbation determined by the canonical harmonic gauge $\widetilde g=Y_*g$, so that $A_{\widetilde g}=I+H$. Fix a smooth bounded connected domain $\Omega_{\mathrm e}$ satisfying $\ol\Omega\Subset\Omega_{\mathrm e}$, and define
	\begin{equation}\label{eq:He-def}
		H_{\mathrm e}(x)=
		\begin{cases}
			H(x) & \text{if }x\in\Omega,\\
			0 & \text{if }x\in\Omega_{\mathrm e}\setminus\Omega,
		\end{cases}\quad
		A_{\mathrm e}=I+H_{\mathrm e}.
	\end{equation}
	The two definitions agree on $\p\Omega$ because $H|_{\p\Omega}=0$.
	
	\begin{lemma}[Exterior extension and transfer of the DN map]\label{lem:outer-DN-transfer}
		Assume the hypotheses of Lemma~\ref{lem:harmonic-boundary-flatness}. Then
		$H_{\mathrm e}\in C_c^\infty(\Omega_{\mathrm e};\operatorname{Sym}^2\R^n)$,
		$\supp H_{\mathrm e}\subset\ol\Omega\Subset\Omega_{\mathrm e}$,
		$\diver H_{\mathrm e}=0$, and $A_{\mathrm e}$ is a real symmetric uniformly elliptic conductivity. Moreover,
		\begin{equation}\label{eq:outer-DN-equality}
			\Lambda_{A_{\mathrm e}}^{\p\Omega_{\mathrm e}}=\Lambda_I^{\p\Omega_{\mathrm e}}.
		\end{equation}
		After extending $H_{\mathrm e}$ by zero outside $\Omega_{\mathrm e}$, one also has
		\begin{equation}\label{eq:He-Hnplus1}
			\|H_{\mathrm e}\|_{H^{s_*}(\R^n)}\le C\|g-e\|_{C^{k,\alpha}(\ol\Omega)},
		\end{equation}
		where $C>0$ depends only on $n$, $\Omega$, and $\alpha$.
	\end{lemma}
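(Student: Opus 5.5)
The proof has four parts: regularity and support of $H_{\mathrm e}$, the divergence-free condition together with ellipticity, transfer of the DN map by gluing, and the Sobolev bound.

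\emph{Regularity, support, divergence, ellipticity.} By Lemma~\ref{lem:harmonic-boundary-flatness}, $H$ is smooth on $\ol\Omega$ and all its derivatives vanish on $\p\Omega$. Hence the zero extension $H_{\mathrm e}$ is $C^\infty$ across $\p\Omega$. The derivatives of each order, taken from either side, are continuous and agree (all equal zero) on $\p\Omega$. An induction using the mean value theorem in boundary charts shows the glued function is $C^m$ for every $m$. So $H_{\mathrm e}\in C_c^\infty(\Omega_{\mathrm e})$ with $\supp H_{\mathrm e}\subset\ol\Omega\Subset\Omega_{\mathrm e}$. Since $H_{\mathrm e}$ is smooth, $\p_iH_{\mathrm e}^{ij}$ is a continuous function. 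It equals zero in $\Omega$ by Proposition~\ref{prop:harmonic-coordinates} and in $\Omega_{\mathrm e}\setminus\ol\Omega$ trivially, so by continuity $\diver H_{\mathrm e}=0$ everywhere. Ellipticity follows from $\|H\|_{L^\infty}\le C\|g-e\|_{C^{k,\alpha}}<C\varepsilon$ after decreasing $\varepsilon$.

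\emph{Transfer of the DN map.} Equality of the DN maps on $\p\Omega$ comes from $\Lambda_{\widetilde g}=\Lambda_g=\Lambda_e$ together with \eqref{eq:metric-conductivity-Green}. Since $\widetilde g=e$ on $\p\Omega$ by boundary flatness, the geometric DN map coincides with the conductivity DN map, so $\Lambda_{I+H}=\Lambda_I$ on $\p\Omega$. Now fix $f\in H^{1/2}(\p\Omega_{\mathrm e})$ and let $u_0$ be the Euclidean harmonic function in $\Omega_{\mathrm e}$ with trace $f$. Let $u_1\in H^1(\Omega)$ solve $-\diver((I+H)\nabla u_1)=0$ in $\Omega$ with $u_1=u_0$ on $\p\Omega$. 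Set $u=u_1$ in $\Omega$ and $u=u_0$ in $\Omega_{\mathrm e}\setminus\Omega$; the traces match, so $u\in H^1(\Omega_{\mathrm e})$. For $\varphi\in C_c^\infty(\Omega_{\mathrm e})$ we split
\[
\int_{\Omega_{\mathrm e}}A_{\mathrm e}\nabla u\cdot\nabla\varphi\,dx=\int_\Omega(I+H)\nabla u_1\cdot\nabla\varphi\,dx+\int_{\Omega_{\mathrm e}\setminus\Omega}\nabla u_0\cdot\nabla\varphi\,dx.
\]
By \eqref{eq:weak-DN} and $\Lambda_{I+H}=\Lambda_I$, the first integral equals $\langle\Lambda_I(u_0|_{\p\Omega}),\varphi|_{\p\Omega}\rangle=\int_\Omega\nabla u_0\cdot\nabla\varphi\,dx$. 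The total is therefore $\int_{\Omega_{\mathrm e}}\nabla u_0\cdot\nabla\varphi\,dx=0$, so $u$ is the $A_{\mathrm e}$-harmonic solution with trace $f$. Apply the same computation with $\varphi$ replaced by an arbitrary $V\in H^1(\Omega_{\mathrm e})$ having trace $\psi$ on $\p\Omega_{\mathrm e}$. This gives $\int A_{\mathrm e}\nabla u\cdot\nabla V=\int\nabla u_0\cdot\nabla V$, i.e.\ $\langle\Lambda_{A_{\mathrm e}}f,\psi\rangle=\langle\Lambda_If,\psi\rangle$, which is \eqref{eq:outer-DN-equality}. This is the Lemma~\ref{lem:mixed-identity}-style weak argument, and no regularity is needed beyond $H^1$.

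\emph{Sobolev bound (main obstacle).} The bound \eqref{eq:He-Hnplus1} is the delicate step, since $s_*$ may be fractional and the zero extension must be controlled in $H^{s_*}$ by a $C^{k,\alpha}$ norm. Because $H_{\mathrm e}\in C_c^{k,\alpha}(\R^n)$ with support in the fixed compact set $\ol\Omega$, I would first show $\|H_{\mathrm e}\|_{C^{k,\alpha}(\R^n)}\le C\|H\|_{C^{k,\alpha}(\ol\Omega)}$. All derivatives up to order $k$ vanish on $\p\Omega$, and for $x\in\Omega$, $y\notin\ol\Omega$ the segment meets $\p\Omega$ at a point $z$. Then $|D^kH_{\mathrm e}(x)-D^kH_{\mathrm e}(y)|=|D^kH(x)-D^kH(z)|\le[D^kH]_{C^\alpha}|x-y|^\alpha$, so the H\"older seminorm does not increase.

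Next I would use the embedding $C^{k,\alpha}_c(B)\hookrightarrow H^{s}(\R^n)$ for $s<k+\alpha$ on a fixed ball $B$. One route uses the Besov identification $C^{k,\alpha}=B^{k+\alpha}_{\infty,\infty}$ and the embedding $B^{k+\alpha}_{\infty,\infty}\hookrightarrow B^{s}_{2,2}$ for compactly supported functions when $s<k+\alpha$. The more elementary route writes $s_*=k+\sigma$ with $0\le\sigma<\alpha$ (possible since $k+\alpha>s_*$) and bounds the Slobodeckij seminorm
\[
\iint\frac{|D^kH_{\mathrm e}(x)-D^kH_{\mathrm e}(y)|^2}{|x-y|^{n+2\sigma}}\,dx\,dy.
\]
For $|x-y|\le1$ the integrand is at most $[D^kH_{\mathrm e}]_{C^\alpha}^2|x-y|^{2\alpha-2\sigma-n}$, which is integrable because $\alpha>\sigma$. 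For $|x-y|>1$ the integrand is bounded by $\|D^kH_{\mathrm e}\|_{L^\infty}^2|x-y|^{-n-2\sigma}$; at least one of $x,y$ lies in the compact support, which handles the case $\sigma>0$. When $\sigma=0$, i.e.\ $n$ even, $s_*=k$ is an integer and no seminorm is needed. Combined with \eqref{eq:tilde-A-smallness}, this yields \eqref{eq:He-Hnplus1}, with constants depending only on $n$, $\Omega$, $\alpha$ and the fixed choice of $\Omega_{\mathrm e}$.
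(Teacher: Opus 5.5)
Your proposal is correct and follows essentially the same route as the paper. Both arguments get smoothness and support from the boundary flatness of $H$, use a weak gluing argument based on $\Lambda_{I+H}^{\p\Omega}=\Lambda_I^{\p\Omega}$, and feed a H\"older bound for the zero extension of the order-$k$ derivatives into the Slobodeckij seminorm of order $\sigma=s_*-k<\alpha$. The paper writes that seminorm through translations and a Fourier identity, but it is the same integral.

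The remaining differences are cosmetic:
\begin{itemize}
\item You glue in the mirror direction. You insert the $(I+H)$-solution into the Euclidean harmonic function on $\Omega_{\mathrm e}$ and then test against arbitrary $H^1$ extensions. The paper instead inserts the Euclidean harmonic extension into the $A_{\mathrm e}$-solution and compares normal derivatives near $\p\Omega_{\mathrm e}$.
\item You obtain $\diver H_{\mathrm e}=0$ by continuity rather than by integration by parts.
\item For uniform ellipticity, no further decrease of $\varepsilon$ is needed, since $A_{\widetilde g}$ is a continuous positive definite matrix field on the compact set $\ol\Omega$.
\end{itemize}
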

	
	\begin{proof}
		Formula~\eqref{eq:boundary-flatness} shows that all derivatives of $H$ vanish on $\p\Omega$, and the piecewise definition \eqref{eq:He-def} is smooth across the interface. Since $H_{\mathrm e}=0$ on $\Omega_{\mathrm e}\setminus\Omega$, its support is contained in $\ol\Omega\Subset\Omega_{\mathrm e}$. The tensor $H_{\mathrm e}$ belongs to $C_c^\infty(\Omega_{\mathrm e};\operatorname{Sym}^2\R^n)$. The matrix $A_{\mathrm e}$ equals $A_{\widetilde g}$ in $\Omega$ and $I$ outside $\Omega$, so it is real symmetric and uniformly elliptic.
		
		We verify the divergence condition across the interface. Let $\phi\in C_c^\infty(\Omega_{\mathrm e};\R^n)$. Since $\diver H=0$ in $\Omega$ and $H=0$ on $\p\Omega$, integration by parts gives
		\begin{equation}\label{eq:He-div-proof}
			\begin{aligned}
				\int_{\Omega_{\mathrm e}}H_{\mathrm e}^{ij}\p_i\phi_j\,dx=\int_\Omega H^{ij}\p_i\phi_j\,dx=\int_{\p\Omega}H^{ij}\nu_i\phi_j\,dS-\int_\Omega(\p_iH^{ij})\phi_j\,dx
				=0.
			\end{aligned}
		\end{equation}
		This proves $\diver H_{\mathrm e}=0$ in distributions. Since $H_{\mathrm e}$ is smooth, the identity also holds classically.
		
		We next identify the conductivity DN map on the inner boundary. Since $Y$ fixes $\p\Omega$ pointwise, the invariance of the geometric DN map gives $\Lambda_{\widetilde g}=\Lambda_g=\Lambda_e$. Formula~\eqref{eq:boundary-flatness} gives $\widetilde g=e$ on $\p\Omega$, so the induced boundary measures for $\widetilde g$ and $e$ agree. Applying the weak identity \eqref{eq:metric-conductivity-Green} to $\widetilde g$ and to $e$ identifies the geometric DN maps with the conductivity DN maps associated with $I+H$ and $I$. We obtain
		\begin{equation}\label{eq:inner-conductivity-DN}
			\Lambda_{I+H}^{\p\Omega}		=\Lambda_I^{\p\Omega}.
		\end{equation}

		Fix $f\in H^{1/2}(\p\Omega_{\mathrm e})$, and let $u\in H^1(\Omega_{\mathrm e})$ be the $A_{\mathrm e}$-harmonic function with boundary trace $f$ on $\p\Omega_{\mathrm e}$. Set $q=u|_{\p\Omega}$. The restriction of $u$ to $\Omega$ is $(I+H)$-harmonic with boundary trace $q$. Let $v\in H^1(\Omega)$ be the Euclidean harmonic function with the same trace $q$, and define
		\begin{equation}\label{eq:glued-U}
			U=
			\begin{cases}
				v & \text{in }\Omega,\\
				u & \text{in }\Omega_{\mathrm e}\setminus\ol\Omega.
			\end{cases}
		\end{equation}
		The two pieces have the same trace on $\p\Omega$, so $U\in H^1(\Omega_{\mathrm e})$.
		
		Let $\varphi\in C_c^\infty(\Omega_{\mathrm e})$. Since $u$ satisfies the global weak equation in $\Omega_{\mathrm e}$ and $A_{\mathrm e}=I$ in $\Omega_{\mathrm e}\setminus\ol\Omega$, we have
		\[
		\int_{\Omega_{\mathrm e}\setminus\ol\Omega}\nabla u\cdot\nabla\varphi\,dx
		=-\int_\Omega(I+H)\nabla u\cdot\nabla\varphi\,dx
		=-\langle\Lambda_{I+H}q,\varphi|_{\p\Omega}\rangle.
		\]
		Since $v$ is Euclidean harmonic in $\Omega$, we have $\int_\Omega\nabla v\cdot\nabla\varphi\,dx=\langle\Lambda_Iq,\varphi|_{\p\Omega}\rangle$. Adding these identities and using \eqref{eq:inner-conductivity-DN} gives $\int_{\Omega_{\mathrm e}}\nabla U\cdot\nabla\varphi\,dx=\langle\Lambda_Iq,\varphi|_{\p\Omega}\rangle-\langle\Lambda_{I+H}q,\varphi|_{\p\Omega}\rangle=0$. The function $U$ is Euclidean harmonic in $\Omega_{\mathrm e}$ and has boundary trace $f$ on $\p\Omega_{\mathrm e}$. Uniqueness for the Dirichlet problem shows that $U$ is the harmonic function with boundary value $f$.
		
		By definition, $U=u$ throughout $\Omega_{\mathrm e}\setminus\ol\Omega$. This region contains a full neighborhood of $\p\Omega_{\mathrm e}$, and $A_{\mathrm e}=I$ there. The outer conormal derivative of $u$ agrees with the normal derivative of $U$ on $\p\Omega_{\mathrm e}$. Since this holds for every boundary value $f$, formula~\eqref{eq:outer-DN-equality} follows.
		
		Finally, we prove the quantitative estimate for the zero extension. Set $\sigma=s_*-k$, so $\sigma=0$ when $n$ is even and $\sigma=1/2$ when $n$ is odd. By \eqref{eq:boundary-flatness}, for every multi-index $\beta$ with $|\beta|\le k$, the distributional derivative $D^\beta H_{\mathrm e}$ is the zero extension of $D^\beta H$. Since $\Omega$ is bounded, the integer-order Sobolev norm satisfies $\|H_{\mathrm e}\|_{H^k(\R^n)}\le C\|H\|_{C^k(\ol\Omega)}$. This proves the required estimate when $\sigma=0$.
		
		Suppose that $\sigma=1/2$, and let $|\beta|=k$. Write $F=D^\beta H_{\mathrm e}$ and $f=D^\beta H$. The field $f$ belongs to $C^\alpha(\ol\Omega)$ and vanishes on $\p\Omega$. Its zero extension satisfies
		\begin{equation}\label{eq:zero-extension-Holder}
			\|F\|_{C^\alpha(\R^n)}\le C\|H\|_{C^{k,\alpha}(\ol\Omega)}.
		\end{equation}
		To verify the seminorm bound across the boundary, let $x\in\Omega$ and $y\notin\Omega$, and choose $p\in\p\Omega$ with $|x-p|=\operatorname{dist}(x,\p\Omega)$. Since $f(p)=0$ and $|x-p|\le|x-y|$, we have $|F(x)-F(y)|=|f(x)-f(p)|\le[f]_{C^\alpha(\ol\Omega)}|x-y|^\alpha$. If both points lie in $\Omega$, the original H\"older estimate applies; if both lie outside $\Omega$, the difference is zero. This proves \eqref{eq:zero-extension-Holder}.
		
		For $0<\sigma<1$, Plancherel's theorem and the change of variables in the Fourier integral give
		\begin{equation}\label{eq:fractional-translation-identity}
			\int_{\R^n}\frac{\|F(\cdot+z)-F\|_{L^2(\R^n)}^2}{|z|^{n+2\sigma}}\,dz=c_{n,\sigma}\int_{\R^n}|\xi|^{2\sigma}|\wh F(\xi)|^2\,d\xi,
		\end{equation}
		where $c_{n,\sigma}>0$. Indeed, the Fourier multiplier of $F(\cdot+z)-F(\cdot)$ is $e^{\mathsf{i}z\cdot\xi}-1$, and integration of its squared modulus against $|z|^{-n-2\sigma}\,dz$ gives a positive constant times $|\xi|^{2\sigma}$.
		
		The support of $F(\cdot+z)-F(\cdot)$ is contained in $(\ol\Omega-z)\cup\ol\Omega$, whose measure is at most $2|\Omega|$. For $|z|\le1$, formula \eqref{eq:zero-extension-Holder} gives $\|F(\cdot+z)-F(\cdot)\|_{L^2}^2\le C\|H\|_{C^{k,\alpha}(\ol\Omega)}^2|z|^{2\alpha}$. For $|z|>1$, translation invariance of the $L^2$ norm gives $\|F(\cdot+z)-F(\cdot)\|_{L^2}^2\le4\|F\|_{L^2}^2\le C\|H\|_{C^{k,\alpha}(\ol\Omega)}^2$. Hence,
		\begin{equation}\label{eq:zero-extension-fractional}
			\begin{aligned}
				\int_{\R^n}\frac{\|F(\cdot+z)-F\|_{L^2}^2}{|z|^{n+2\sigma}}\,dz\le	C\|H\|_{C^{k,\alpha}(\ol\Omega)}^2
				\bigg(\int_0^1 r^{2\alpha-2\sigma-1}\,dr+\int_1^\infty r^{-2\sigma-1}\,dr\bigg)\le C\|H\|_{C^{k,\alpha}(\ol\Omega)}^2,
			\end{aligned}
		\end{equation}
		where the last inequality uses $\alpha>\sigma$.
		
		Applying \eqref{eq:fractional-translation-identity} and \eqref{eq:zero-extension-fractional} to every derivative of order $k$, and using the equivalence of $|\xi|^{2k}$ with $\sum_{|\beta|=k}|\xi^\beta|^2$, gives $\|H_{\mathrm e}\|_{H^{k+\sigma}(\R^n)}\le C\|H\|_{C^{k,\alpha}(\ol\Omega)}$. Together with the case $\sigma=0$ and Proposition~\ref{prop:harmonic-coordinates}, this proves
		\begin{equation}\label{eq:He-Hnplus1-proof}
			\|H_{\mathrm e}\|_{H^{s_*}(\R^n)}\le C\|H\|_{C^{k,\alpha}(\ol\Omega)}\le C\|g-e\|_{C^{k,\alpha}(\ol\Omega)},
		\end{equation}
		where the constants depend only on $n$, $\Omega$, and $\alpha$. This proves \eqref{eq:He-Hnplus1}.
	\end{proof}
	
	\subsection{Proof of the rigidity result}
	
	\begin{proof}[Proof of Theorem~\ref{thm:metric-main}]
		Let $\varepsilon>0$ be the smallness threshold in Proposition~\ref{prop:harmonic-coordinates}. Choose a smooth bounded connected domain $\Omega_{\mathrm e}$ with $\ol\Omega\Subset\Omega_{\mathrm e}$ and a compact set $K_{\mathrm e}$ satisfying $\ol\Omega\subset K_{\mathrm e}\Subset\Omega_{\mathrm e}$. These choices are fixed throughout the proof and depend only on $\Omega$. Let $\varepsilon_0>0$ be the threshold in Theorem~\ref{thm:conductivity-main}, with the domain $\Omega$ and the compact set $K$ in that theorem replaced by $\Omega_{\mathrm e}$ and $K_{\mathrm e}$, respectively, and let $C_{\mathrm e}$ be the constant in \eqref{eq:He-Hnplus1}. Choose $\varepsilon_1>0$ so that $\varepsilon_1\le\varepsilon$ and $C_{\mathrm e}\varepsilon_1<\varepsilon_0$.
		
		Suppose that the assumptions of Theorem~\ref{thm:metric-main} hold. Proposition~\ref{prop:harmonic-coordinates} gives the canonical boundary fixing diffeomorphism $Y$ and the conductivity $A_{\widetilde g}=I+H$ associated with $\widetilde g=Y_*g$. Lemma~\ref{lem:harmonic-boundary-flatness} shows that $H$ vanishes to infinite order at $\p\Omega$. Lemma~\ref{lem:outer-DN-transfer} gives an extension $H_{\mathrm e}$ satisfying $\supp H_{\mathrm e}\subset\ol\Omega\subset K_{\mathrm e}$, $\diver H_{\mathrm e}=0$, and $\Lambda_{I+H_{\mathrm e}}^{\p\Omega_{\mathrm e}}=\Lambda_I^{\p\Omega_{\mathrm e}}$. Using \eqref{eq:metric-smallness}, it also gives
		\begin{equation}\label{eq:metric-Hnplus1-small}
			\|H_{\mathrm e}\|_{H^{s_*}(\R^n)}\le C_{\mathrm e}\|g-e\|_{C^{k,\alpha}(\ol\Omega)}
			<C_{\mathrm e}\varepsilon_1<\varepsilon_0.
		\end{equation}
		Theorem~\ref{thm:conductivity-main} gives $H_{\mathrm e}=0$. In particular, $A_{\widetilde g}=I$ in $\Omega$. Since $A_{\widetilde g}=(\det\widetilde g)^{1/2}\widetilde g^{-1}$, taking determinants gives $\det A_{\widetilde g}=(\det\widetilde g)^{(n-2)/2}$. Thus, for $n\ge3$, $\widetilde g=(\det A_{\widetilde g})^{1/(n-2)}A_{\widetilde g}^{-1}$. Since $A_{\widetilde g}=I$, we obtain $\widetilde g=e$. Finally, $\widetilde g=Y_*g=(Y^{-1})^*g$ implies $g=Y^*e$.
		
		It remains to prove the uniqueness assertion. Let $F\in\Diff^\infty(\ol\Omega)$ satisfy $F|_{\p\Omega}=\Id$ and $g=F^*e$. For each $1\le j\le n$, the function $F^j=x^j\circ F$ satisfies
		$-\Delta_gF^j=(-\Delta_ex^j)\circ F=0$
		and $F^j=x^j$ on $\p\Omega$. The function $y^j$ defining the canonical harmonic map $Y$ solves the same Dirichlet problem. Dirichlet uniqueness gives $F^j=y^j$ for every $1\le j\le n$, so $F=Y$.
	\end{proof}
	
	\begin{corollary}[Rigidity in harmonic coordinates]\label{cor:metric-harmonic-gauge}
		Under the assumptions of Theorem~\ref{thm:metric-main}, suppose in addition that
		\begin{equation}\label{eq:metric-harmonic-gauge}
			\p_i\big((\det g)^{1/2}g^{ij}\big)=0\quad\text{for }1\le j\le n.
		\end{equation}
		Then $g=e$.
	\end{corollary}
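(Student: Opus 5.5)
Under the extra hypothesis \eqref{eq:metric-harmonic-gauge}, the plan is to show that the canonical harmonic map $Y$ of Theorem~\ref{thm:metric-main} is the identity. Then the conclusion $g=Y^*e$ of that theorem gives $g=e$ at once.

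First I rewrite \eqref{eq:metric-harmonic-gauge} in terms of the conductivity: it says exactly $\p_iA_g^{ij}=0$, with $A_g=(\det g)^{1/2}g^{-1}$. Since $A_g$ is symmetric, for each $j$ this gives
\[
\diver(A_g\nabla x^j)=\p_i(A_g^{ij})=0 .
\]
By the equivalence between $-\diver(A_g\nabla u)=0$ and $-\Delta_gu=0$ recorded at the start of Section~\ref{sec:metric}, each coordinate function $x^j$ is $g$-harmonic in $\Omega$. It also satisfies $x^j=x^j$ on $\p\Omega$, so $x^j$ solves the same Dirichlet problem \eqref{eq:BVP-harmonic-coord} as $y^j$.

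Uniqueness for this Dirichlet problem (the operator is uniformly elliptic because $g$ is close to $e$) gives $y^j=x^j$ for every $j$, that is, $Y=\Id$. Theorem~\ref{thm:metric-main} applies because its hypotheses are assumed, and it yields $g=Y^*e=\Id^*e=e$.

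No step should be a real obstacle. The only point to check is that the divergence is taken in the first index, so that $\p_iA_g^{ij}=0$ really means $\diver(A_g\nabla x^j)=0$; symmetry of $A_g$ settles this. One could also argue through the uniqueness clause of Theorem~\ref{thm:metric-main}, but the direct Dirichlet uniqueness argument above is simpler.
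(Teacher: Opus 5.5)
Your proposal is correct and follows the paper's proof: under the gauge condition each $x^j$ is $g$-harmonic with the same boundary values as $y^j$, Dirichlet uniqueness gives $Y=\Id$, and the conclusion $g=Y^*e$ of Theorem~\ref{thm:metric-main} yields $g=e$. Your check that $\diver(A_g\nabla x^j)=\p_iA_g^{ij}$ is a detail the paper leaves implicit.
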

	
	\begin{proof}
		Each Cartesian coordinate function $x^j$ is $g$-harmonic under \eqref{eq:metric-harmonic-gauge} and has the same boundary value as $y^j$. Dirichlet uniqueness gives $y^j=x^j$ for every $1\le j\le n$, so $Y=\Id$. The proof of Theorem~\ref{thm:metric-main} gives $Y_*g=e$, and hence $g=e$.
	\end{proof}
	
	\begin{remark}[The canonical rigidity diffeomorphism]\label{rem:diffeomorphism-gauge}
		For a fixed metric $g$, the boundary fixing diffeomorphism in Theorem~\ref{thm:metric-main} is uniquely determined and is equal to the canonical harmonic map $Y$. This does not remove the usual diffeomorphism invariance of the anisotropic Calder\'on problem. Indeed, if $F\in\Diff^\infty(\ol\Omega)$ fixes the boundary and $g_F=F^*e$, then $\Lambda_{g_F}=\Lambda_e$, while the canonical harmonic map associated with $g_F$ is precisely $F$. The boundary data determine the metric only up to a boundary fixing diffeomorphism, whereas the harmonic construction identifies the unique rigidity map after a particular representative $g$ has been fixed.
	\end{remark}

	\section*{Statements and declarations}
	
	\para{Data availability statement} No datasets were generated or analyzed in this study.
	
	\para{Conflict of interest} The author declares no conflict of interest.
	
\para{Acknowledgments} The author is deeply grateful to Mikko Salo for suggesting a direct comparison with the Calder\'on exponential solution, which provided an important starting point for the argument, and for generously reading the manuscript and offering valuable comments.

After completing this work, the author became aware of the preprint by P.~Stefanov \cite{Stefanov2026}, which establishes a closely related rigidity result for the anisotropic Calder\'on problem near the Euclidean metric. The present work was carried out independently.

The author is partially supported by the National Science and Technology Council (NSTC), Taiwan, under the project 113-2115-M-A49-017-MY3. The author also acknowledges financial support from the Alexander von Humboldt Foundation through the Henriette Herz Scouting Programme, hosted by Universit\"at Duisburg-Essen, Germany. The author acknowledges the use of AI tools. All mathematical arguments and proofs in the final manuscript were checked and written by the author.
	\bibliographystyle{alpha}
	\bibliography{refs}
	
\end{document}